\theoremstyle{plain}
\newtheorem{prop}{Proposition}[section]
\newtheorem{conj}[prop]{Conjecture}
\newtheorem{coro}[prop]{Corollary}
\newtheorem{lemm}[prop]{Lemma}
\newtheorem{thrm}[prop]{Theorem}
\theoremstyle{definition}
\newtheorem{defi}[prop]{Definition}
\newtheorem{rema}[prop]{Remark}
\numberwithin{equation}{section}
\newdimen\arrayitem
\def\arrayitem{3.5mm}
\newcommand\SM[1]{\hbox to \arrayitem{\hfil$#1$\hfil}}
\def\0{\hbox to \arrayitem{\hfil$\cdot$\hfil}}
\def\1{\hbox to \arrayitem{\hfil$1$\hfil}}
\def\mo{\hbox to \arrayitem{\hfil$-\!1$\hfil}}
\def\2{\hbox to \arrayitem{\hfil$2$\hfil}}
\def\3{\hbox to \arrayitem{\hfil$3$\hfil}}
\def\4{\hbox to \arrayitem{\hfil$4$\hfil}}
\def\5{\hbox to \arrayitem{\hfil$5$\hfil}}
\def\6{\hbox to \arrayitem{\hfil$6$\hfil}}
\def\7{\hbox to \arrayitem{\hfil$7$\hfil}}
\def\8{\hbox to \arrayitem{\hfil$8$\hfil}}
\renewcommand\aa{a} 
\renewcommand\AA{A} 
\newcommand\Ack{\mathrm{Ack}}
\newcommand\act{\mathbin{\scriptscriptstyle\bullet}}\newcommand\bb{b} 
\newcommand\Aut{\mathrm{Aut}}
\newcommand\AxI{\mathrm{I}}
\newcommand\BB{B} 
\newcommand\BKL[1]{B_{#1}^{\hspace{-0.05em}\raise0.6pt\hbox{$\scriptscriptstyle+$}\hspace{-0.05em}*}}
\newcommand\BP[1]{B_{#1}^{\scriptscriptstyle+}}
\newcommand\br{\beta}
\newcommand\BR[1]{B_{#1}}
\newcommand\brr{\gamma}
\newcommand\cc{c}
\newcommand\CC{C}
\newcommand\CCCC{\mathbb{C}}
\newcommand\dd{d}
\newcommand\DD{D}
\newcommand\der{\partial}
\newcommand\ee{e}
\newcommand\EEE{\mathcal{E}}
\newcommand\eps{\varepsilon}
\newcommand\ff{f}
\newcommand\FF{F}
\newcommand\flip{\phi}
\newcommand\Flip[1]{\flip_#1}
\newcommand\gD{>_{\!\scriptscriptstyle D}}
\let\ge=\geqslant
\newcommand\GG{G}
\newcommand\GGG{\mathcal{G}}
\newcommand\GL{\mathrm{GL}}
\newcommand\HH{H}
\newcommand\Hom{\mathrm{Hom}}
\newcommand\id{\mathrm{id}}
\newcommand\ii{i}
\newcommand\inv{^{-1}}
\newcommand\IS[1]{\mathsf{I}\Sigma_{#1}}
\newcounter{ITEM}
\newcommand\ITEM[1]{\setcounter{ITEM}{#1}\leavevmode\hbox{\rm(\roman{ITEM})}}
\newcommand\jj{j}
\newcommand\kk{k}
\newcommand\lD{<_{\!\scriptscriptstyle D}}
\newcommand\lDf{<_{\!\scriptscriptstyle D}^{\scriptscriptstyle\flip}}
\let\le=\leqslant
\newcommand\leD{\le_{\!\scriptscriptstyle D}}
\newcommand\LL{L}
\newcommand\mm{m}
\newcommand\MM{M}
\newcommand\MOD[2]{#1\,\mathrm{mod}\,#2}
\newcommand\nn{n}
\newcommand\NN{N}
\newcommand\NNNN{\mathbb{N}}
\newcommand\op{\mathbin{*}}
\newcommand\opbar{\mathbin{\bar*}}
\newcommand\opl{\hspace{0.2ex}\rceil\hspace{0.1ex}}
\newcommand\opr{\hspace{0.1ex}\lceil\hspace{0.2ex}}
\newcommand\opt{\mathbin{\triangleleft}}
\newcommand\optbar{\mathbin{\bar\triangleleft}}
\newcommand\opp{\relax}
\newcommand\PA{\mathsf{P\hspace{-0.3ex}A}}
\newcommand\pdots{\hspace{0.2ex}{\cdot}{\cdot}{\cdot}\hspace{0.2ex}}
\newcommand\per{\pi}
\newcommand\phih{\widehat{\VR(2,0)\smash\phi}}
\newcommand\pp{p}
\newcommand\proj{\mathrm{pr}}
\newcommand\qq{q}
\newcommand\QQ{Q}
\newcommand\Rack{{\scriptscriptstyle\mathrm{R}}}
\newcommand\resp{{\it resp.}~}
\newcommand\res{\mathord{\upharpoonright}}
\newcommand\rr{r}
\newcommand\RR{R}
\newcommand\RRRR{\mathbb{R}}
\newcommand\sig[1]{\sigma_{\!#1}^{\vrule height5pt width0pt}}
\newcommand\sigg[2]{\sigma_{\!#1}^{#2}}
\newcommand\siginv[1]{\sigma_{\!#1}^{-1}}
\newcommand\sh{\mathrm{sh}}
\newcommand\SL{\mathtt{ShortLex}}
\newcommand\sol{\rho}
\renewcommand\SS{S}
\renewcommand\tt{t}
\newcommand\TT{T}
\def\VR(#1,#2){\vrule width0pt height#1mm depth#2mm}
\newcommand\VV{V}
\newcommand\wdots{, ...\hspace{0.2ex},}
\newcommand\WO[1]{W\!O_{\!#1}}
\newcommand\ww{w}
\newcommand\xx{x}
\newcommand\XX{X}
\newcommand\yy{y}
\newcommand\zz{z}
\newcommand\ZZ{Z} 
\newcommand\ZZZZ{\mathbb{Z}}
\begin{document}

\title{Laver's results and low-dimensional topology}

\author{Patrick DEHORNOY}
\address{Laboratoire de Math\'ematiques Nicolas Oresme, UMR 6139 CNRS, Universit\'e de Caen BP 5186, 14032 Caen Cedex, France}
\curraddr{Laboratoire Preuves, Programmes, Syst\`emes, UMR 7126 CNRS, Universit\'e Paris-Diderot Case 7014, 75205 Paris Cedex 13, France}
\email{dehornoy@math.unicaen.fr}
\urladdr{//www.math.unicaen.fr/\!\hbox{$\sim$}dehornoy}

\keywords{Laver table, selfdistributivity, large cardinal, rack, quandle, link invariant, rack cohomology, $R$-matrix, set-theoretic Yang--Baxter equation, birack, braid ordering, well-order, unprovability statements, conjugacy problem}

\subjclass[2010]{57M25, 03E55, 03F30, 06F15, 20F10, 20F36}

\maketitle

\begin{flushright}
{\it Dedicated to the memory of Rich Laver}
\end{flushright}

\begin{abstract}
In connection with his interest in selfdistributive algebra, Richard Laver established two deep results with potential applications in low-dimen\-sional topology, namely the existence of what is now known as the Laver tables and the well-foundedness of the standard ordering of positive braids. Here we present these results and discuss the way they could be used in topological applications. 
\end{abstract}

Richard Laver established two remarkable results that might lead to significant applications in low-dimensional topology, namely the existence of a series of finite structures satisfying the left-selfdistributive law, now known as the \emph{Laver tables}, and the well-foundedness of the standard ordering of Artin's positive braids. In this text, we shall explain the precise meaning of these results and discuss their (past or future) applications in topology. In one word, the current situation is that, although the depth of Laver's results is not questionable, few topological applications have been found. However, the example of braid groups orderability shows that, once initial obstructions are solved, topological applications of algebraic results involving selfdistributivity can be found; the situation with Laver tables is presumably similar, and the only reason explaining why so few applications are known is that no serious attempt has been made so far, mainly because the results themselves remain widely unknown in the topology community. 

Therefore this text is more a program than a report on existing results. Our aim is to provide a self-contained and accessible introduction to the subject, hopefully helping the algebraic and topological communities to better communicate. Most of the results mentioned below have already appeared in literature, a number of them even belonging to the folklore of their domain (whereas ignored outside of it). However, at least the observations about cocycles for Laver tables mentioned in Subsection~\ref{SS:Homology} (and established in another paper) are new.

Very naturally, the text comprises two sections, one devoted to Laver tables, and one devoted to the well-foundedness of the braid ordering. It should be noted that the above two topics (Laver tables, well-foundedness of the braid ordering) do not exhaust Laver's contributions to selfdistributive algebra and, from there, to potential topological applications: in particular, Laver constructed powerful tools for investigating free LD-structures, leading to applications of their own~\cite{Lvb, Lvc, Lvd}. However, connections with topology are less obvious in these cases and we shall not develop them here (see the other articles in this volume). 

\section*{Acknowledgments}
The author wishes to thank the specialists of knot theory and quantum groups who open-mindedly welcomed his questions and made a number of suggestions, in particular Nicol\'as Andruskiewitsch, Scott Carter, Mohamed Elhamdadi, Christian Kassel, Victoria Lebed, J\'ozef Przytycki, Marc Rosso.

\section{Laver tables}
\label{S:LaverTables}

The left-selfdistributivity law is the algebraic law (LD) $\xx\opp(\yy\opp\zz) = (\xx\opp\yy)\opp(\xx\opp\zz)$, which is obeyed among others by the conjugacy operation of any group. Its connection with low-dimensional topology as an algebraic distillation of Reidemeister move of type~III has been recognized more than thirty years ago \cite{Joy, Mat}, and its investigation led in particular to the discovery of the orderability of Artin's braid groups~\cite{Dfa, Dfb}. Every new example of a structure obeying the LD-law is potentially promising for topological applications. First described in~1995, the Laver tables are a family of such finite structures. Easily accessible to computer experiments but quite different from the standard examples, they are fundamental in several respects and using them in topology is one of the most exciting programs one could reasonably propose.

The section comprises four subsections: after introducing the Laver tables in Subsection~\ref{SS:Tables}, we successively discuss four approaches known to provide applications of selfidistributivity, namely diagram colourings (Subsection~\ref{SS:Colouring}), homology and cohomology (Subsection~\ref{SS:Homology}) and, finally, $R$-matrices and the Yang--Baxter equation (Subsection~\ref{SS:YBE}); in each case, we first introduce the general context and then discuss the specific case of Laver tables. To save some space, we deliberately omitted virtual knots~\cite{KauV} here, although the latter provide a natural framework for extending many existing results and might therefore appear as natural candidates for using Laver tables.

\subsection{Laver's result}
\label{SS:Tables}

In the rest of this text, an algebraic structure $(\SS, \op)$ made of a set equipped with a binary operation that obeys the LD-law will be called an \emph{LD-system}, a neutral and easily understandable term inspired by Bruck's classical textbook~\cite{Bru}. The names ``LD-groupoid'' and ``LD-algebra'' have also been used in the algebra community (conflicting with other standard meanings for ``groupoid'' and ``algebra''), whereas ``shelf'' was sporadically used in the topology community for the right-counterpart of an LD-system, that is, a binary system that obeys the right-selfdistributivity law (RD) $(\xx\opp\yy)\opp\zz = (\xx\opp\zz)\opp(\yy\opp\zz)$.

Most of the classically known LD-systems are connected with conjugacy in a group. In particular, not much was known before the 1990's about finite monogenerated LD-systems, that is, those that are generated by a single element. Richard Laver changed this situation radically in~1995---thus answering by anticipation the question candidly raised twenty years after in~\cite[Problem~9]{PrzSik}.

\begin{thrm}[Laver \cite{Lvd}]
\label{T:Tables}
\ITEM1 For every $\NN \ge 1$, there exists a unique binary operation~$\op$ on $\{1 \wdots \NN\}$ that, for all~$\pp, \qq$, satisfies 
\begin{gather}
\label{E:Tables1}
\pp \op 1 = \pp + 1 \MOD{}\NN, \\
\label{E:Tables1}
\pp \op (\qq \op 1) = (\pp \op \qq) \op (\pp \op 1).
\end{gather}
Then $(\{1 \wdots \NN\}, \op)$ is an LD-system if and only if $\NN$ is a power of~$2$.

\ITEM2 Let $\AA_\nn$ be the LD-system of size $2^\nn$ so obtained. Then, for all~$\nn$ and~$\pp \le 2^\nn$, there exists a (unique) integer~$2^\rr$ satisfying 
\begin{equation*}
\pp \op 1 < \pp \op 2 < \pdots < \pp \op 2^\rr = 2^\nn,
\end{equation*}
and the subsequent values $\pp \op \qq$ then repeat periodically. The number~$2^\rr$ is called the \emph{period} of~$\pp$, written~$\per_\nn(\pp)$; one has $\per_\nn(2^\nn - 1) = 1$ and $\per_\nn(2^\nn) = 2^\nn$.

\ITEM3  The LD-system $\AA_\nn$ admits the presentation $\langle 1 \mid 1_{[2^\nn]} = 1\rangle$, where $\xx_{[\kk]}$ stands for $(...((\xx {\op} \xx) {\op} \xx) ...) {\op} \xx$ with~$\xx$ repeated $\kk$~times.

\ITEM4 For $\nn \ge 1$, the map $\proj_\nn : \xx \mapsto \xx \ \mathrm{mod}\ 2^{\nn-1}$ defines a surjective homomorphism from~$\AA_\nn$ to~$\AA_{\nn-1}$ and, for every~$\pp \le 2^\nn$, one has either $\per_\nn(\pp) = \per_{\nn-1}(\proj_\nn(\pp))$ or $\per_\nn(\pp) = 2\per_{\nn-1}(\proj_\nn(\pp))$.

\ITEM5 If Axiom~$\AxI3$---see below---is true, the period $\per_\nn(1)$ tends to~$\infty$ with~$\nn$, the relation $\per_\nn(1) \ge \per_\nn(2)$ holds for every~$\nn$, and, letting $\AA_\infty$ be the limit of the inverse system $(\AA_\nn, \proj_\nn)_\nn$, the sub-LD-system of~$\AA_\infty$ generated by $(1, 1, ...)$ is free.
\end{thrm}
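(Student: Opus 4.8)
My plan is to realise every $\AA_\nn$ as a finite quotient of a single infinite LD-system coming from $\AxI3$, and to read all three assertions off that picture. Assume $\AxI3$ and fix a nontrivial elementary embedding $j\colon V_\lambda\to V_\lambda$ with critical sequence $\kappa_0<\kappa_1<\pdots$, $\kappa_{m+1}=j(\kappa_m)$; recall $\sup_m\kappa_m=\lambda$. Let $\EEE_\lambda$ be the LD-system of such embeddings under application $\op$, and let $\langle j\rangle$ be the sub-LD-system it generates. The plan rests on Laver's representation theorem: the relation $u\equiv_\nn v$, meaning that $u$ and $v$ act identically on $V_{\kappa_\nn}$ (that is $u\res V_{\kappa_\nn}=v\res V_{\kappa_\nn}$), is a congruence on $\langle j\rangle$, the quotient $\langle j\rangle/{\equiv_\nn}$ is isomorphic to $\AA_\nn$ with $j\mapsto 1$, and these quotients are compatible with the maps $\proj_\nn$. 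This yields a homomorphism $\Phi\colon\langle j\rangle\to\AA_\infty$ with $\Phi(j)=(1,1,\pdots)$. Establishing this representation is exactly where $\AxI3$ is used, and it is the main obstacle: the divergence $\per_\nn(1)\to\infty$ is genuinely $\AxI3$-sensitive and is not known to be provable in $\ZF$ alone.

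Freeness is then the cleanest consequence. I would invoke Laver's theorem that $\langle j\rangle$ is free on the single generator $j$, and then show that $\Phi$ is injective on $\langle j\rangle$: if $u\neq v$ in $\langle j\rangle$, then, as distinct elements of $V_\lambda$, they disagree on some $V_{\kappa_\nn}$ because $\bigcup_\nn V_{\kappa_\nn}=V_\lambda$, whence $u\not\equiv_\nn v$. Thus $\Phi$ embeds $\langle j\rangle$ into $\AA_\infty$, its image is precisely the sub-LD-system generated by $(1,1,\pdots)$, and this image is therefore free.

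For the divergence $\per_\nn(1)\to\infty$ I would translate the period into the embedding language: under the isomorphism $\langle j\rangle/{\equiv_\nn}\cong\AA_\nn$ the equality $1\op 2^\rr=2^\nn$ signals that the relevant iterate of $j$ has become the identity below $\kappa_\nn$, so $\per_\nn(1)$ measures how many doublings are needed before the critical point of that iterate climbs above $\kappa_\nn$. Using elementarity of $j$ together with the cofinality $\sup_m\kappa_m=\lambda$, one shows that the number of levels to be traversed is unbounded in $\nn$; the doubling dichotomy of item~(iv)---so that $(\per_\nn(1))_\nn$ is non-decreasing---then upgrades ``unbounded'' to ``$\to\infty$''. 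The delicate point, and the reason no $\ZF$ proof is known, is precisely the unboundedness step: it requires that $j$ be a genuine nontrivial embedding rather than a finite approximation.

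Finally, for $\per_\nn(1)\ge\per_\nn(2)$ (for $\nn\ge 2$, the case $\nn=1$ being degenerate since there $2=2^\nn$) I would first record the purely algebraic identity $L_1L_1=L_2L_1$, where $L_\pp\colon\qq\mapsto\pp\op\qq$; it follows from $\xx\op(\yy\op\zz)=(\xx\op\yy)\op(\xx\op\zz)$ with $\xx=\yy=1$ and $2=1\op1$, and shows that $L_1$ and $L_2$ agree on $\mathrm{im}(L_1)$, a set of size $\per_\nn(1)$. To convert this into the inequality of periods I would argue by induction on $\nn$ via $\proj_\nn$ and the doubling dichotomy of item~(iv), the inductive step reducing to the coupling lemma that the generator's period doubles whenever the second row's does; in the embedding picture this coupling reflects $\mathrm{crit}(j)\le\mathrm{crit}(j\op j)=j(\kappa_0)=\kappa_1$, so that $j$ cannot reach the identity below a given level before $j\op j$ does. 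This last part is comparatively minor relative to the representation and the divergence.
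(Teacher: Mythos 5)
Your proposal engages only item \ITEM5 of the theorem. Items \ITEM1--\ITEM4 are never proved---indeed your argument consumes them as inputs (the period structure of \ITEM2, the doubling dichotomy of \ITEM4)---and they cannot be obtained from the embedding picture in any case: they are finitistic statements which, as the text surrounding the theorem stresses, are established by direct combinatorial arguments requiring no large cardinal. So at best you have a sketch of \ITEM5. Within \ITEM5, the freeness argument is Laver's own route and is essentially sound: freeness of $\langle j\rangle$, plus injectivity of $\Phi$ obtained by separating two distinct embeddings at some finite level, does give that the subsystem generated by $(1,1,\dots)$ is free. However, the representation theorem on which you rest everything is mis-stated in a way that matters. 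Laver's congruences cut at the \emph{critical ordinals}---the critical points of arbitrary elements of $\langle j\rangle$---and it is the quotient at the $2^\nn$th critical ordinal that is isomorphic to $\AA_\nn$. Your $\equiv_\nn$ cuts instead at $\kappa_\nn$, the $\nn$th member of the critical sequence of $j$. The critical sequence is an extremely sparse subfamily of the critical ordinals (this sparseness is exactly what the Dougherty--Jech theorem measures), so the quotient at $\kappa_\nn$ is a Laver table vastly larger than $\AA_\nn$. The freeness argument survives this, since any cofinal family of finite quotients separates points, but your translation of $\per_\nn(1)$ as ``the number of doublings needed before the critical point climbs above $\kappa_\nn$'' is attached to the wrong quotient: taken literally it would force $\per_\nn(1)$ to grow enormously fast, whereas in fact $\per_4(1)=4$ and, by Dougherty--Jech, $\per_\nn(1)$ grows no faster than the functional inverse of the Ackermann function.

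The deeper gap is in the divergence itself. Writing $\gamma_0<\gamma_1<\pdots$ for the increasing enumeration of the critical ordinals, one has $\per_\nn(1)\ge 2^\kk$ essentially when $\gamma_{2^\nn}>\kappa_\kk$, so $\per_\nn(1)\to\infty$ amounts to the assertion that below each $\kappa_\kk$ there are only \emph{finitely many} critical ordinals, so that the cutting levels eventually pass $\kappa_\kk$. Elementarity of $j$ together with $\sup_\mm\kappa_\mm=\lambda$ cannot deliver this: cofinality of the critical sequence is automatic, and it is perfectly compatible with the critical ordinals accumulating below some $\kappa_\mm$, in which case the finite quotients would stall and $\per_\nn(1)$ would remain bounded forever. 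Ruling out such accumulation is the Laver--Steel theorem (the set of critical ordinals has order type $\omega$), which rests on Steel's deep result about extenders---precisely the ingredient the paper singles out as the non-trivial set-theoretic core of \ITEM5, and which appears nowhere in your sketch; you flag the unboundedness step as delicate, but you do not identify what it actually needs. Finally, for $\per_\nn(1)\ge\per_\nn(2)$: the identity $L_1L_1=L_2L_1$ is correct, but it only says that $L_1$ and $L_2$ agree on the image of $L_1$ and does not compare the sizes of their images; and your ``coupling lemma'' is, given \ITEM4, exactly equivalent to the inequality you are trying to prove, so justifying it by the one-line intuition $\mathrm{crit}(j)\le\mathrm{crit}(j[j])$ leaves this part circular. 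Laver's proof of that inequality is a genuine analysis of critical points in the quotient algebras, and that work is what is missing here.
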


The LD-system~Ê$\AA_\nn$ is now known as the \emph{$\nn$th Laver table}. Due to their explicit definition, it is easy to effectively compute the first Laver tables, see Table~\ref{T:LaverTables}.

\begin{table}[htb]
\begin{gather*}
\begin{tabular}{c|c}
$\AA_0$&$1$\\
\hline
$1$&$1$
\end{tabular}
\quad 
\begin{tabular}{c|cc}
$\AA_1$&$1$&$2$\\
\hline
$1$&$2$&$2$\\
$2$&$1$&$2$\\
\end{tabular}
\quad 
\begin{tabular}{c|cccc}
$\AA_2$&$1$&$2$&$3$&$4$\\
\hline
$1$&$2$&$4$&$2$&$4$\\
$2$&$3$&$4$&$3$&$4$\\
$3$&$4$&$4$&$4$&$4$\\
$4$&$1$&$2$&$3$&$4$\\
\end{tabular}
\quad 
\begin{tabular}{c|cccccccc}
$\AA_3$&$1$&$2$&$3$&$4$&$5$&$6$&$7$&$8$\\
\hline
$1$&$2$&$4$&$6$&$8$&$2$&$4$&$6$&$8$\\
$2$&$3$&$4$&$7$&$8$&$3$&$4$&$7$&$8$\\
$3$&$4$&$8$&$4$&$8$&$4$&$8$&$4$&$8$\\
$4$&$5$&$6$&$7$&$8$&$5$&$6$&$7$&$8$\\
$5$&$6$&$8$&$6$&$8$&$6$&$8$&$6$&$8$\\
$6$&$7$&$8$&$7$&$8$&$7$&$8$&$7$&$8$\\
$7$&$8$&$8$&$8$&$8$&$8$&$8$&$8$&$8$\\
$8$&$1$&$2$&$3$&$4$&$5$&$6$&$7$&$8$\\
\end{tabular}\\
\def\arrayitem{5mm}
\smaller\begin{tabular}{c|c}
\VR(0,1.5)$\AA_4$&\SM1\SM2\SM3\SM4\SM5\SM6\SM7\SM8\SM9\SM{10}\SM{11}\SM{12}\SM{13}\SM{14}\SM{15}\SM{16}\\
\hline
\VR(3,0)\SM1&\SM2\SM{12}\SM{14}\SM{16}\SM2\SM{12}\SM{14}\SM{16}\SM2\SM{12}\SM{14}\SM{16}\SM2\SM{12}\SM{14}\SM{16}\\
\SM2&\SM3\SM{12}\SM{15}\SM{16}\SM3\SM{12}\SM{15}\SM{16}\SM3\SM{12}\SM{15}\SM{16}\SM3\SM{12}\SM{15}\SM{16}\\
\SM3&\SM4\SM8\SM{12}\SM{16}\SM4\SM8\SM{12}\SM{16}\SM4\SM8\SM{12}\SM{16}\SM4\SM8\SM{12}\SM{16}\\
\SM4&\SM5\SM6\SM7\SM8\SM{13}\SM{14}\SM{15}\SM{16}\SM5\SM6\SM7\SM8\SM{13}\SM{14}\SM{15}\SM{16}\\
\SM5&\SM6\SM8\SM{14}\SM{16}\SM6\SM8\SM{14}\SM{16}\SM6\SM8\SM{14}\SM{16}\SM6\SM8\SM{14}\SM{16}\\
\SM6&\SM7\SM8\SM{15}\SM{16}\SM7\SM8\SM{15}\SM{16}\SM7\SM8\SM{15}\SM{16}\SM7\SM8\SM{15}\SM{16}\\
\SM7&\SM8\SM{16}\SM8\SM{16}\SM8\SM{16}\SM8\SM{16}\SM8\SM{16}\SM8\SM{16}\SM8\SM{16}\SM8\SM{16}\\
\SM8&\SM9\SM{10}\SM{11}\SM{12}\SM{13}\SM{14}\SM{15}\SM{16}\SM9\SM{10}\SM{11}\SM{12}\SM{13}\SM{14}\SM{15}\SM{16}\\
\SM9&\SM{10}\SM{12}\SM{14}\SM{16}\SM{10}\SM{12}\SM{14}\SM{16}\SM{10}\SM{12}\SM{14}\SM{16}\SM{10}\SM{12}\SM{14}\SM{16}\\
\SM{10}&\SM{11}\SM{12}\SM{15}\SM{16}\SM{11}\SM{12}\SM{15}\SM{16}\SM{11}\SM{12}\SM{15}\SM{16}\SM{11}\SM{12}\SM{15}\SM{16}\\
\SM{11}&\SM{12}\SM{16}\SM{12}\SM{16}\SM{12}\SM{16}\SM{12}\SM{16}\SM{12}\SM{16}\SM{12}\SM{16}\SM{12}\SM{16}\SM{12}\SM{16}\\
\SM{12}&\SM{13}\SM{14}\SM{15}\SM{16}\SM{13}\SM{14}\SM{15}\SM{16}\SM{13}\SM{14}\SM{15}\SM{16}\SM{13}\SM{14}\SM{15}\SM{16}\\
\SM{13}&\SM{14}\SM{16}\SM{14}\SM{16}\SM{14}\SM{16}\SM{14}\SM{16}\SM{14}\SM{16}\SM{14}\SM{16}\SM{14}\SM{16}\SM{14}\SM{16}\\
\SM{14}&\SM{15}\SM{16}\SM{15}\SM{16}\SM{15}\SM{16}\SM{15}\SM{16}\SM{15}\SM{16}\SM{15}\SM{16}\SM{15}\SM{16}\SM{15}\SM{16}\\
\SM{15}&\SM{16}\SM{16}\SM{16}\SM{16}\SM{16}\SM{16}\SM{16}\SM{16}\SM{16}\SM{16}\SM{16}\SM{16}\SM{16}\SM{16}\SM{16}\SM{16}\\
\SM{16}&\SM1\SM2\SM3\SM4\SM5\SM6\SM7\SM8\SM9\SM{10}\SM{11}\SM{12}\SM{13}\SM{14}\SM{15}\SM{16}
\end{tabular}
\end{gather*}
\caption{\sf\smaller The first five Laver tables; we read for instance the values $\per_0(1) = 1$, $\per_1(1) = \per_2(1) = 2$; $\per_3(1) = \per_4(1) = 4$: the periods of the first rows make a non-decreasing sequence; the tables make an inverse system under projection mod~$2^\nn$: for instance, taking mod~$8$ the four values that occur in the first row of~$\AA_4$, namely $2, 12, 14, 16$, yields $2, 4, 6, 8$, which are the $4$~values that occur in the first row of~$\AA_3$; taking the latter mod~$4$ then gives $2, 4$ repeated twice, hence the two values that occur in the first row of~$\AA_2$, \emph{etc}.}
\label{T:LaverTables}
\end{table}

The way Richard Laver discovered the tables is remarkable, and, together with the orderability of braid groups, it is arguably one of the most interesting applications of large cardinals ideas in algebra~\cite{Dfp}.

In recent Set Theory, large cardinal axioms play an important r\^ole as natural axioms that can be added to the basic Zermelo-Fraenkel system to enhance its logical power. A number of such axioms state the existence of certain elementary embeddings, that is, of injective maps that preserve every notion that is first-order definable from the membership relation. One of the most simple such statements, Axiom~$\AxI3$, asserts the existence of a (nontrivial, that is, non-bijective) elementary embedding from a limit rank~$V_\lambda$ to itself~\cite{SRK, Kan}. The point here is that, if such an object exists, then the family~$\EEE_\lambda$ of all such elementary embeddings of~$V_\lambda$ to itself equipped with the binary operation $j[k] := \bigcup_{\alpha < \lambda} j(k \res V_\alpha)$ is an LD-system, that is, the relation $\ii[\jj[\kk]] = \ii[\jj][\ii[\kk]]$ holds. It has been known since the 1980's that the algebraic structures~$(\EEE_\lambda, [\,])$ have nontrivial properties~\cite{Dem}. Investigating them since the time of~\cite{Lva}, Laver was naturally led to introducing their quotients obtained by cutting the graphs of the elementary embeddings at some level. Laver proved that, for every~$\nn$, cutting at the level of what is called the $2^\nn$th critical ordinal yields a finite quotient with $2^\nn$~elements and that the latter enjoys the properties listed in Theorem~\ref{T:Tables}. What is remarkable here is that, once the definition of Theorem~\ref{T:Tables}\ITEM1 has been isolated, the existence of the tables and the basic properties listed in~\ITEM1--\ITEM4 can be established directly, without appealing to elementary embeddings and, therefore, they do not require any large cardinal assumption. By contrast, for the properties listed in~\ITEM5, no direct combinatorial proof has been found so far and, therefore, one cannot assert them without assuming the (unprovable) existence of an elementary embedding of the needed type, which is precisely the (strong) large cardinal axiom~$\AxI3$. We refer to Chapters~X, XII, and~XIII of~\cite{Dgd} for details.

The algebraic investigation of Laver tables was pursued in two directions. The first one is the study of general (finite) LD-systems, which proved to be an intricate question. As shown by A.\,Dr\'apal in~\cite{DraCyc, DraAlg, DraGro}, the global result is that Laver tables are the fundamental objects when one considers finite LD-systems with one generator: every such LD-system can be obtained from Laver tables by means of various transformations, see~\cite[Section~X.2]{Dgd} for precise statements, and also the recent preprint~\cite{Sme}, which offers a simplified description in a restricted situation. Summarizing, we can say that the Laver tables play, in the world of selfdistributivity, the same r\^ole as the one played by the cyclic groups $\ZZZZ/\pp\ZZZZ$ in the world of associativity. 

The second direction of research was to try to discard the large cardinal assumption in Theorem~\ref{T:Tables}\ITEM5---or, contrariwise, to prove that it is necessary. So far, only partial results have been obtained. In the direction of eliminating the axiom, A.\,Dr\'apal established in~\cite{Dra1, Dra2, Dra3} the first three steps of a program which, if completed, would show that $\per_\nn(1)$ tends to infinity with~$\nn$. The combinatorial complexity increases so fast that the problem was then abandoned. In the other direction, it was proved by R.\,Dougherty and T.\,Jech \cite{DoJ} that $\per_\nn(1)$ tends to infinity (if it does) at least as slow as the functional inverse of the Ackermann function, implying that its divergence cannot be proved in Primitive Recursive Arithmetic. This however says nothing for Peano Arithmetic nor, {\it a fortiori} for the Zermelo--Fraenkel system. Note that, in contradistinction with the properties of free LD-systems, in particular the irreflexivity property of~\cite{Lvb} and~\cite{Dez}, that were first proved using Axiom~$\AxI3$ and subsequently without it, the properties of the LD-systems~$\EEE_\lambda$ used to establish that $\per_\nn(1)$ tends to infinity are not trivial from a set-theoretical point of view, relying on a deep result by J.\,Steel about extenders. This might explain why discarding the large cardinal axiom is more difficult here, see~\cite{Dgs} for details.

\subsection{The diagram colouring approach}
\label{SS:Colouring}

We now turn to possible applications of the Laver tables in low-dimensional topology, starting here with the principle of using selfdistributive structures to colour the strands of link or braid diagrams and its known implementations: in this subsections as in the next ones, we first recall the general principle (thus mentioning elements that are mostly standard in the topology community) and then consider the specific case of Laver tables. 

As a general preliminary remark, we would like to insist on the fact that, according to Theorem~\ref{T:Tables}, Laver tables are closely connected with free LD-systems, so, in some sense, with the most general LD-systems. By contrast, all racks and quandles that have been used so far in topology (see Definition~\ref{D:Rack} below) are, by very definition, quite far from being free LD-systems: for instance, every rack (here in its left-selfdistributive version) satisfies the law $(\xx \op \xx) \op \yy = \xx \op \yy$ and its operation is closely connected with the conjugacy operation of a group. This is not at all the case with general LD-systems, and with Laver tables in particular: in a sense, this is bad news as it may suggest that none of the existing tools will extend, but, in another sense, this is good news as this suggests that any possible application of the Laver tables has good chances to be really new---as was the application of free LD-systems to braid orderability twenty years ago.

\subsubsection*{The general principle}

In order to investigate embedded $1$-dimensional objects like knots, links, braids, one usually starts with diagrams similar to those of Figure~\ref{F:Diagrams}, which are seen as plane projections of curves embedded in~$\RRRR^3$, and the generic question is to recognize whether two diagrams represent ambient isotopic curves, that is, whether there exists a continuous deformation of the ambient space that takes the curves projecting to the first diagram to the curve projecting to the second diagram. 

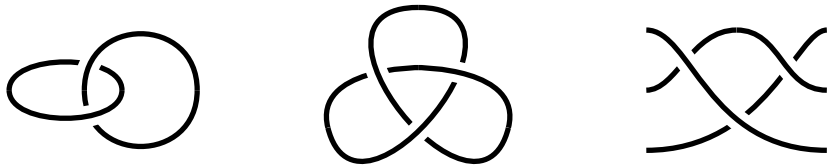
\begin{figure}[htb]
\begin{picture}(25,23)(0,0)
\psbezier[linewidth=2pt, border=3pt](0,10)(0,15)(15,15)(15,10)
\psbezier[linewidth=2pt, border=3pt](10,10)(10,0)(25,0)(25,10)
\psbezier[linewidth=2pt, border=3pt](10,10)(10,20)(25,20)(25,10)
\psbezier[linewidth=2pt, border=3pt](0,10)(0,5)(15,5)(15,10)
\end{picture}
\hspace{15mm}
\begin{picture}(25,20)(0,3)
\psbezier[linewidth=2pt, border=3pt](0,8)(-2,16)(12,16)(12,16)
\psbezier[linewidth=2pt, border=3pt](12,24)(-6,24)(20,-8)(24,8)
\psbezier[linewidth=2pt, border=3pt](0,8)(4,-8)(30,24)(12,24)
\psbezier[linewidth=2pt, border=3pt](12,16)(12,16)(26,16)(24,8)
\end{picture}
\hspace{15mm}
\begin{picture}(24,20)(0,-2)
\psbezier[linewidth=2pt,border=3pt](0,8)(4,8)(6,16)(12,16)
\psbezier[linewidth=2pt,border=3pt](0,0)(16,0)(20,16)(24,16)
\psbezier[linewidth=2pt,border=3pt](12,16)(18,16)(18,8)(24,8)
\psbezier[linewidth=2pt,border=3pt](0,16)(6,16)(8,0)(24,0)
\end{picture}
\caption{\sf\smaller Three diagrams, respectively representing the Hopf link (two circles embedded in~$\RRRR^3$), the trefoil knot (one embedded circle), and Garside's fundamental braid~$\Delta_3$.}
\label{F:Diagrams}
\end{figure}

A classical result---see for instance \cite{Bir}, \cite{BuZ}, or~\cite{Kau}---asserts that two diagrams represent ambient isotopic links if and only if they can be transformed into one another by means of the three types of Reidemeister moves displayed in Figure~\ref{F:Moves}.

\begin{figure}[htb]
\begin{picture}(51,15)(0,0)
\psbezier[linewidth=2pt, border=3pt](0,0)(10,0)(15,10)(8,10)
\psbezier[linewidth=2pt, border=3pt](8,10)(1,10)(6,0)(16,0)
\put(17,4){$\sim$}
\psline[linewidth=2pt, border=3pt](21,0)(35,0)
\put(36,4){$\sim$}
\psbezier[linewidth=2pt, border=3pt](48,10)(41,10)(46,0)(56,0)
\psbezier[linewidth=2pt, border=3pt](40,0)(50,0)(55,10)(48,10)
\put(0,13){type I:}
\end{picture}
\hfill
\begin{picture}(65,16)(0,-1)
\psbezier[linewidth=2pt, border=3pt](0,0)(6,0)(6,8)(10,8)
\psbezier[linewidth=2pt, border=3pt](10,8)(14,8)(14,0)(20,0)
\psbezier[linewidth=2pt, border=3pt](0,8)(6,8)(6,0)(10,0)
\psbezier[linewidth=2pt, border=3pt](10,0)(14,0)(14,8)(20,8)
\put(21,3){$\sim$}
\psline[linewidth=2pt, border=3pt](25,0)(39,0)
\psline[linewidth=2pt, border=3pt](25,8)(39,8)
\put(40,3){$\sim$}
\psbezier[linewidth=2pt, border=3pt](45,8)(51,8)(51,0)(55,0)
\psbezier[linewidth=2pt, border=3pt](55,0)(59,0)(59,8)(65,8)
\psbezier[linewidth=2pt, border=3pt](45,0)(51,0)(51,8)(55,8)
\psbezier[linewidth=2pt, border=3pt](55,8)(59,8)(59,0)(65,0)
\put(0,12){type II:}
\end{picture}
\hspace{15mm}
\begin{picture}(55,27)(0,-2)
\psbezier[linewidth=2pt,border=3pt](0,8)(4,8)(6,16)(12,16)
\psbezier[linewidth=2pt,border=3pt](0,0)(16,0)(20,16)(24,16)
\psbezier[linewidth=2pt,border=3pt](12,16)(18,16)(18,8)(24,8)
\psbezier[linewidth=2pt,border=3pt](0,16)(6,16)(8,0)(24,0)
\put(26,7){$\sim$}
\psbezier[linewidth=2pt,border=3pt](30,0)(36,0)(40,16)(54,16)
\psbezier[linewidth=2pt,border=3pt](42,0)(48,0)(48,8)(54,8)
\psbezier[linewidth=2pt,border=3pt](30,16)(42,16)(48,0)(54,0)
\psbezier[linewidth=2pt,border=3pt](30,8)(34,8)(36,0)(42,0)
\put(0,19){type III:}
\end{picture}
\caption{\sf\smaller Reidemeister moves: two diagrams represent ambient isotopic figures if and only if they can be transformed into one another using a finite sequence of such moves.}
\label{F:Moves}
\end{figure}
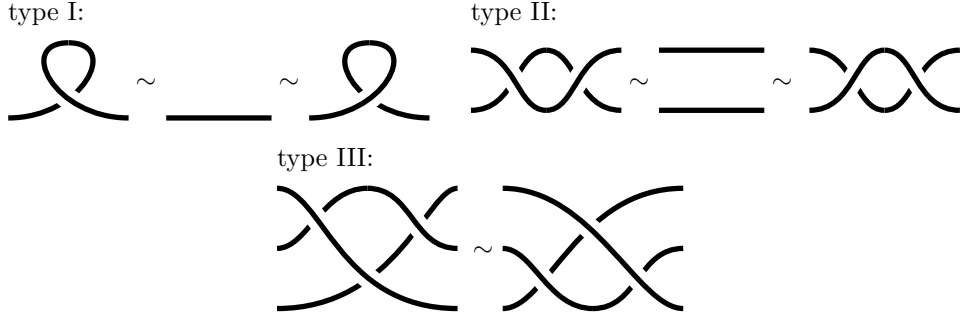

The idea of strand colouring is then natural: assuming that an auxiliary set~$\SS$ (the ``colours'') has been fixed, we attach to each arc in the considered diagram a label from~$\SS$, with the aim of extracting information about the isotopy class of the considered diagram. If the colours do not change when two strands cross, the only piece of information that can be extracted is the number of connected components in the case of a (closed) link diagram, or the permutation associated with the braid in the case of an (open) braid diagram. Things become more interesting when colours are allowed to change at crossings. The simplest case is when the strands are oriented and, when two strands cross, only the colour of the top arc may change and its new colour only depends on the colours of the two arcs involved in the crossing and of the orientation of the latter. This amounts to assuming that the set of colours~$\SS$ is equipped with two binary operations~$\op, \opbar$ and the colours obey the rules
\begin{equation}
\label{E:Rules}
\VR(6,6)\begin{picture}(55,0)(0,4)
\psbezier[linewidth=2pt,border=3pt](0,0)(4,0)(4,8)(8,8)
\psbezier[linewidth=2pt,border=3pt](0,8)(4,8)(4,0)(8,0)
\pcline[linewidth=2pt,border=3pt](-1,0)(0,0)
\pcline[linewidth=2pt,border=3pt](-1,8)(0,8)
\pcline[linewidth=2pt,border=3pt]{->}(8,8)(10,8)
\pcline[linewidth=2pt,border=3pt]{->}(8,0)(10,0)
\put(-4,7.5){$\bb$}
\put(-4,-0.5){$\aa$}
\put(11.5,7.5){$\aa$}
\put(11.5,-0.5){$\aa \op \bb$}
\put(23,3){and}
\psbezier[linewidth=2pt,border=3pt](40,8)(44,8)(44,0)(48,0)
\psbezier[linewidth=2pt,border=3pt](40,0)(44,0)(44,8)(48,8)
\psline[linewidth=2pt,border=3pt](39,0)(40,0)
\psline[linewidth=2pt,border=3pt](39,8)(40,8)
\psline[linewidth=2pt,border=3pt]{->}(48,8)(50,8)
\psline[linewidth=2pt,border=3pt]{->}(48,0)(50,0)
\put(36,7.5){$\bb$}
\put(36,-0.5){$\aa$}
\put(51.5,7.5){$\aa \opbar \bb$}
\put(51.5,-0.5){$\bb\quad.$}
\end{picture}
\end{equation}
Now, in order to possibly extract information about the isotopy class of a diagram, we have to request that the colours do not change when an isotopy is performed or, more exactly, that an admissible colouring is mapped to (another) admissible colouring with the same output. Both in the case of closed diagrams (knots and links) and in the case of open diagrams (braids), this amounts to requiring that, when a Reidemeister move is performed and some input colours are applied to the (oriented) strands, then the output colours are not changed. This immediately translates into algebraic constraints for the operations~$\op$ and~$\opbar$.

\begin{lemm}[Joyce \cite{Joy}]
\label{L:Translation}
Assume that $\op$ and $\opbar$ are binary operations on~$\SS$. Then $(\SS, \op, \opbar)$-colourings are invariant under Reidemeister moves if and only if $(\SS, \op, \opbar)$ obeys the following laws:
\begin{align}
\label{E:RI}
&\text{type~I:} 
&&\xx \op \xx = \xx \opbar \xx = \xx;\\
\label{E:RII}
&\text{type~II:} 
&&\xx \op (\xx \opbar \yy) = \xx \opbar (\xx \op \yy) = \yy;\\
\label{E:RIII}
&\text{type~III:} 
&&\xx \op' (\yy \op'' \zz) = (\xx \op' \yy) \op'' (\xx \op' \zz)\ 
\text{for $\op', \op''$ ranging in $\{\op, \opbar\}$}.
\end{align}
\end{lemm}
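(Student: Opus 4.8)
The plan is to reduce the global statement about isotopy invariance to a finite list of local identities, one family per type of Reidemeister move, and to read off the laws \eqref{E:RI}--\eqref{E:RIII} directly. The starting observation is that a Reidemeister move only alters a diagram inside a small disk, leaving the rest untouched; since the colouring rules \eqref{E:Rules} are local, a colouring of the modified diagram agrees with a colouring of the original one outside the disk if and only if, for every assignment of colours to the arcs entering the disk, the colours of the arcs leaving the disk computed through the two local tangles coincide. Thus ``$(\SS, \op, \opbar)$-colourings are invariant under a given move'' is equivalent to an equality between the two boundary-output maps of that move, and it suffices to compute these maps for each move and compare.

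First I would treat each move type in turn. For type~I, colouring a kink carries an input arc $\xx$ to $\xx \op \xx$ (or $\xx \opbar \xx$, according to the sign of the self-crossing) on the outgoing arc, whereas the straightened arc outputs $\xx$; equating gives exactly $\xx \op \xx = \xx$ and $\xx \opbar \xx = \xx$, that is, \eqref{E:RI}. For type~II, feeding colours into the two strands of an over-then-under bigon and propagating through both crossings returns the top arc to its input only when $\xx \op (\xx \opbar \yy) = \yy$; the mirror bigon gives $\xx \opbar (\xx \op \yy) = \yy$. Together these are \eqref{E:RII}, and they say precisely that for each fixed $\xx$ the translations $\yy \mapsto \xx \op \yy$ and $\yy \mapsto \xx \opbar \yy$ are mutually inverse. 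For type~III, I would colour the three incoming strands by $\xx, \yy, \zz$ and trace the colours through all three crossings on each side of the move (as in Figure~\ref{F:Moves}); the two sides agree exactly when $\xx \op' (\yy \op'' \zz) = (\xx \op' \yy) \op'' (\xx \op' \zz)$, the choice of $\op', \op'' \in \{\op, \opbar\}$ being dictated by the signs of the crossings involved. This is \eqref{E:RIII}.

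Since each of these equalities is an ``if and only if'' at the level of a single move, the same computations yield both directions at once: the colourings are invariant under all the moves precisely when all the listed identities hold. The main obstacle is the bookkeeping over the oriented and signed variants of the moves. There are several variants of each move (different strand orientations and over/under choices), and one must check two things: that the variants actually realized contribute exactly the four distributivity instances of \eqref{E:RIII} together with \eqref{E:RI}--\eqref{E:RII}, with no spurious extra law, and that the remaining variants are redundant. The cleanest way to control this is to fix a generating family of oriented moves and to observe that, once the type~I idempotency and, above all, the type~II invertibility relations \eqref{E:RII} are available, every other variant of a move can be transported to one already treated by pre- and post-composing with the (now invertible) crossing translations; this collapses the \emph{a priori} large number of type~III conditions to the four stated ones and shows that no further constraint arises.
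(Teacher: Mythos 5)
Your proposal is correct and follows essentially the same route as the paper's proof, which consists precisely of the local colour-tracing displayed in Figure~\ref{F:Translation}: the kinks and bigons of types I and~II yield \eqref{E:RI} and~\eqref{E:RII}, and the oriented variants of type~III yield the four laws of~\eqref{E:RIII}, the signs of the crossings dictating the choice of $\op'$ and~$\op''$. Your closing transport argument---reducing the remaining oriented variants of the moves to the treated ones by pre- and post-composing with the type~II bijections---is a correct refinement of a point the paper disposes of with the remark that the undisplayed cases are ``similar''.
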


The proof is given in Figure~\ref{F:Translation}.

\begin{figure}[htb]
\begin{picture}(71,15)(0,0)
\put(-20,3){type I:}
\psbezier[linewidth=2pt, border=3pt](0,0)(10,0)(15,10)(8,10)
\psbezier[linewidth=2pt, border=3pt](8,10)(1,10)(6,0)(16,0)
\psline[linewidth=2pt,border=3pt]{->}(16,0)(18,0)
\put(-3,-1){$\aa$}
\put(2,8){$\aa$}
\put(19,-1){$\aa \op \aa$}
\put(28,4){$\sim$}
\psline[linewidth=2pt, border=3pt]{->}(36,0)(52,0)
\put(33,-1){$\aa$}
\put(53,-1){$\aa$}
\put(57,4){$\sim$}
\psbezier[linewidth=2pt, border=3pt](73,10)(66,10)(71,0)(81,0)
\psbezier[linewidth=2pt, border=3pt](65,0)(75,0)(80,10)(73,10)
\psline[linewidth=2pt,border=3pt]{->}(81,0)(83,0)
\put(62,-1){$\aa$}
\put(67,8){$\aa$}
\put(84,-1){$\aa \opbar \aa$}
\end{picture}

\begin{picture}(86,18)(0,0)
\put(-18,3){type II:}
\psbezier[linewidth=2pt, border=3pt](0,0)(6,0)(6,8)(10,8)
\psbezier[linewidth=2pt, border=3pt](10,8)(14,8)(14,0)(20,0)
\psbezier[linewidth=2pt, border=3pt](0,8)(6,8)(6,0)(10,0)
\psbezier[linewidth=2pt, border=3pt](10,0)(14,0)(14,8)(20,8)
\psline[linewidth=2pt,border=3pt]{->}(20,0)(22,0)
\psline[linewidth=2pt,border=3pt]{->}(20,8)(22,8)
\put(-3,-1){$\aa$}
\put(-3,7){$\bb$}
\put(23,-1){$\aa$}
\put(23,7){$\aa {\opbar} (\aa {\op} \bb)$}
\put(7,-3){$\aa {\op} \bb$}
\put(9,9){$\aa$}
\put(36,3){$\sim$}
\psline[linewidth=2pt, border=3pt]{->}(43,0)(59,0)
\psline[linewidth=2pt, border=3pt]{->}(43,8)(59,8)
\put(40,-1){$\aa$}
\put(40,7){$\bb$}
\put(60,-1){$\aa$}
\put(60,7){$\bb$}
\put(63,3){$\sim$}
\psbezier[linewidth=2pt, border=3pt](70,8)(76,8)(76,0)(80,0)
\psbezier[linewidth=2pt, border=3pt](80,0)(84,0)(84,8)(90,8)
\psbezier[linewidth=2pt, border=3pt](70,0)(76,0)(76,8)(80,8)
\psbezier[linewidth=2pt, border=3pt](80,8)(84,8)(84,0)(90,0)
\psline[linewidth=2pt,border=3pt]{->}(90,0)(92,0)
\psline[linewidth=2pt,border=3pt]{->}(90,8)(92,8)
\put(67,-1){$\aa$}
\put(67,7){$\bb$}
\put(93,7){$\bb$}
\put(93,-1){$\bb {\op} (\bb {\opbar} \aa)$}
\put(77,9){$\bb {\opbar} \aa$}
\put(79,-3){$\bb$}
\end{picture}

\begin{picture}(75,25)(0,0)
\put(-22,7){type III$_{\scriptscriptstyle++}$:}
\psbezier[linewidth=2pt,border=3pt](0,8)(4,8)(6,16)(12,16)
\psbezier[linewidth=2pt,border=3pt](0,0)(16,0)(20,16)(24,16)
\psbezier[linewidth=2pt,border=3pt](12,16)(18,16)(18,8)(24,8)
\psbezier[linewidth=2pt,border=3pt](0,16)(6,16)(8,0)(24,0)
\psline[linewidth=2pt,border=3pt]{->}(24,0)(26,0)
\psline[linewidth=2pt,border=3pt]{->}(24,8)(26,8)
\psline[linewidth=2pt,border=3pt]{->}(24,16)(26,16)
\put(-3,-1){$\aa$}
\put(-3,7){$\bb$}
\put(-3,15){$\cc$}
\put(27,-1){$\aa {\op} (\bb {\op} \cc)$}
\put(27,7){$\aa{\op}\bb$}
\put(27,15){$\aa$}
\put(9,9){$\bb{\op}\cc$}
\put(11,17.5){$\bb$}
\put(41,7){$\sim$}
\psbezier[linewidth=2pt,border=3pt](50,0)(56,0)(62,16)(74,16)
\psbezier[linewidth=2pt,border=3pt](62,0)(68,0)(68,8)(74,8)
\psbezier[linewidth=2pt,border=3pt](50,16)(62,16)(68,0)(74,0)
\psbezier[linewidth=2pt,border=3pt](50,8)(54,8)(56,0)(62,0)
\psline[linewidth=2pt,border=3pt]{->}(74,0)(76,0)
\psline[linewidth=2pt,border=3pt]{->}(74,8)(76,8)
\psline[linewidth=2pt,border=3pt]{->}(74,16)(76,16)
\put(47,-1){$\aa$}
\put(47,7){$\bb$}
\put(47,15){$\cc$}
\put(77,-1){$(\aa {\op}\bb) {\op} (\aa {\op} \cc)$}
\put(77,7){$\aa{\op}\bb$}
\put(77,15){$\aa$}
\put(59,-3){$\aa{\op}\bb$}
\put(59,5){$\aa$}
\put(65,8.5){$\aa{\op}\cc$}
\end{picture}

\begin{picture}(75,27)(0,-2)
\put(-22,7){type III$_{\scriptscriptstyle-+}$:}
\psbezier[linewidth=2pt,border=3pt](0,16)(6,16)(8,0)(24,0)
\psbezier[linewidth=2pt,border=3pt](0,8)(4,8)(6,16)(12,16)
\psbezier[linewidth=2pt,border=3pt](0,0)(16,0)(20,16)(24,16)
\psbezier[linewidth=2pt,border=3pt](12,16)(18,16)(18,8)(24,8)
\psline[linewidth=2pt,border=3pt]{->}(24,0)(26,0)
\psline[linewidth=2pt,border=3pt]{->}(24,8)(26,8)
\psline[linewidth=2pt,border=3pt]{->}(24,16)(26,16)
\put(-3,-1){$\aa$}
\put(-3,7){$\bb$}
\put(-3,15){$\cc$}
\put(27,-1){$\cc$}
\put(27,7){$(\cc{\opbar}\aa){\op}(\cc{\opbar}\bb)$}
\put(27,15){$\cc{\opbar}\aa$}
\put(9,17.5){$\cc{\opbar}\bb$}
\put(16,5){$\cc{\opbar}\aa$}
\put(9.5,8){$\cc$}
\put(47,7){$\sim$}
\psbezier[linewidth=2pt,border=3pt](55,16)(67,16)(73,0)(79,0)
\psbezier[linewidth=2pt,border=3pt](55,0)(61,0)(67,16)(79,16)
\psbezier[linewidth=2pt,border=3pt](67,0)(73,0)(73,8)(79,8)
\psbezier[linewidth=2pt,border=3pt](55,8)(59,8)(61,0)(67,0)
\psline[linewidth=2pt,border=3pt]{->}(79,0)(81,0)
\psline[linewidth=2pt,border=3pt]{->}(79,8)(81,8)
\psline[linewidth=2pt,border=3pt]{->}(79,16)(81,16)
\put(52,-1){$\aa$}
\put(52,7){$\bb$}
\put(52,15){$\cc$}
\put(82,-1){$\cc$}
\put(82,7){$\cc{\opbar}(\aa{\op}\bb)$}
\put(82,15){$\cc{\opbar}\aa$}
\put(64,-3){$\aa{\op}\bb$}
\put(64,5){$\aa$}
\put(70,8.5){$\cc$}
\end{picture}
\caption{\sf\smaller Translation of invariance under Reidemeister moves into algebraic laws for the colourings: in the case of Reidemeister~III, four orientations are possible, of which only two are displayed; the other two are similar and correspond to the last two combinations of~$\op$ and~$\opbar$.}
\label{F:Translation}
\end{figure}
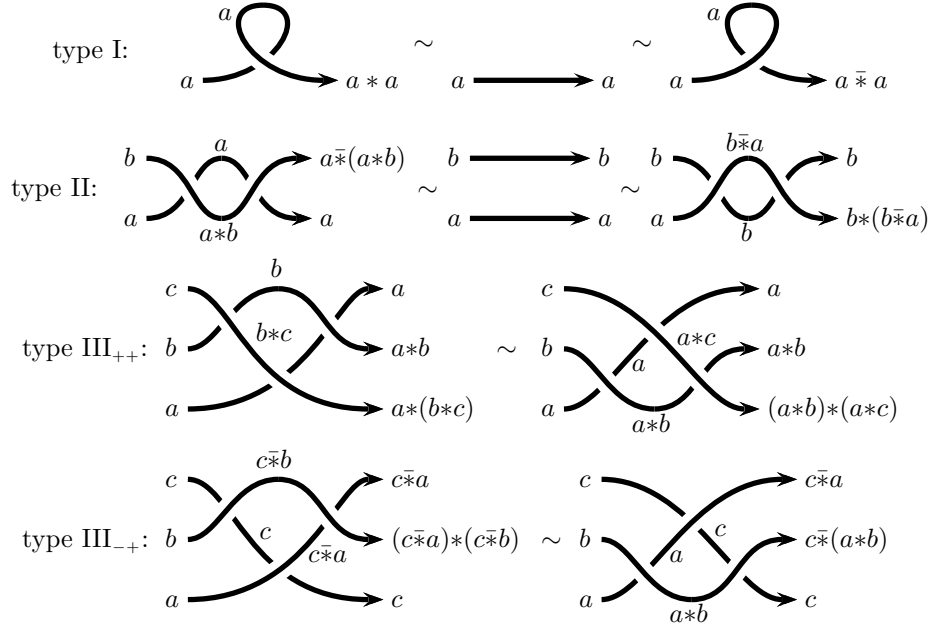

One is thus led to considering the structures involving two binary operations obeying the laws listed in~\eqref{E:RI}--\eqref{E:RIII}. An easy observation is that, in such structures, each operation determines the other and that the four laws of~\eqref{E:RIII} reduce to a single law.

\begin{lemm}
A structure $(\SS, \op, \opbar)$ obeys~\eqref{E:RII} if and only if the left-translations of~$(\SS, \op)$ are bijective and, for all $\aa, \bb$ in~$\SS$, one has
\begin{equation}
\label{E:Inverse}
\aa \opbar \bb = \text{the unique element $\cc$ of~$\SS$ satisfying $\aa \op \cc = \bb$}.
\end{equation}
In this case, the laws of \eqref{E:RI} (\resp \eqref{E:RIII}) are satisfied if and only if $(\SS, \op)$ obeys 
\begin{gather}
\label{E:Idem}
\xx \op \xx = \xx\\
\label{E:LDBis}
(\resp \xx \op (\yy \op \zz) = (\xx \op \yy) \op (\xx \op \zz)).
\end{gather}
\end{lemm}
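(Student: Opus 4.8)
The plan is to phrase everything through the left-translation maps $L_\xx\colon\yy\mapsto\xx\op\yy$ and $\bar L_\xx\colon\yy\mapsto\xx\opbar\yy$. First I would observe that the two equalities packed into \eqref{E:RII} say exactly that $L_\xx\circ\bar L_\xx=\id$ and $\bar L_\xx\circ L_\xx=\id$ for every~$\xx$, i.e. that $L_\xx$ and $\bar L_\xx$ are mutually inverse bijections of~$\SS$. From this the first assertion is immediate in both directions: if \eqref{E:RII} holds then each $L_\xx$ is bijective and $\bar L_\xx=L_\xx\inv$, which is precisely \eqref{E:Inverse}; conversely, if the $L_\xx$ are bijective and $\opbar$ is defined by \eqref{E:Inverse}, then $\bar L_\xx=L_\xx\inv$ and \eqref{E:RII} holds automatically. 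From now on I assume \eqref{E:RII}, so that $\opbar$ is the division operation determined by~$\op$. The idempotency clause is then quick: \eqref{E:RI} visibly contains $\xx\op\xx=\xx$, and conversely, taking $\yy=\xx$ in \eqref{E:RII} gives $\xx\op(\xx\opbar\xx)=\xx$, so if $\xx\op\xx=\xx$ then injectivity of~$L_\xx$ forces $\xx\opbar\xx=\xx$; hence \eqref{E:RI} is equivalent to \eqref{E:Idem}.

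The substantive point is the equivalence of the four laws \eqref{E:RIII} with the single law \eqref{E:LDBis}. One direction is free, since \eqref{E:LDBis} is the instance $\op'=\op''=\op$ of \eqref{E:RIII}. For the converse I would translate each instance: the law indexed by $(\op',\op'')$ asserts that the $\op'$-translation by~$\xx$ is a homomorphism for~$\op''$. Thus the four laws say, respectively, that $L_\xx$ is an endomorphism of $(\SS,\op)$ (this is \eqref{E:LDBis}), that $L_\xx$ preserves~$\opbar$, that $\bar L_\xx$ is an endomorphism of $(\SS,\op)$, and that $\bar L_\xx$ preserves~$\opbar$. The strategy is then to deduce the last three from the first, using that all left-translations are bijective.

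The hard part, and the only step needing a genuine argument, is the following lemma, which I would isolate first: whenever the left-translations of $(\SS,\op)$ are bijective, any bijection $\phi$ of~$\SS$ that is an endomorphism of $(\SS,\op)$ also preserves~$\opbar$, and $\phi\inv$ is again an endomorphism of $(\SS,\op)$. Both halves are short: if $\cc=\aa\opbar\bb$ then $\aa\op\cc=\bb$, so $\phi(\aa)\op\phi(\cc)=\phi(\bb)$ and hence $\phi(\cc)=\phi(\aa)\opbar\phi(\bb)$ by bijectivity of $L_{\phi(\aa)}$; and applying~$\phi$ to $\phi\inv(\uu)\op\phi\inv(\vv)$ returns $\uu\op\vv$, whence $\phi\inv(\uu)\op\phi\inv(\vv)=\phi\inv(\uu\op\vv)$. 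Granting this lemma, \eqref{E:LDBis} makes each $L_\xx$ a bijective endomorphism of $(\SS,\op)$, so $L_\xx$ preserves~$\opbar$ (second law) and $\bar L_\xx=L_\xx\inv$ is again a bijective endomorphism (third law), which in turn preserves~$\opbar$ (fourth law). This yields all of \eqref{E:RIII} from \eqref{E:LDBis} and completes the equivalence.
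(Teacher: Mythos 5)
Your proof is correct in every step, and each reduction is carried out with the right amount of care (in particular, using injectivity of $L_\xx$ to get $\xx\opbar\xx=\xx$ from $\xx\op\xx=\xx$, and using uniqueness of left division to see that an $\op$-endomorphism automatically preserves~$\opbar$). Note, however, that the paper gives no proof of this lemma at all: it explicitly skips the ``easy'' verification and refers to~\cite{Dhy}, so there is no argument in the text to compare yours against. What your write-up contributes beyond the presumable routine check is its organization: after observing that \eqref{E:RII} says precisely that $L_\xx$ and $\bar L_\xx$ are mutually inverse bijections, you reduce the four laws of \eqref{E:RIII} to the single law \eqref{E:LDBis} through one isolated general principle---when all left-translations of $(\SS,\op)$ are bijective, any bijective endomorphism of $(\SS,\op)$ preserves the division operation~$\opbar$, and its inverse is again an endomorphism. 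The direct route would verify the three remaining instances of \eqref{E:RIII} by hand, applying $L_\xx$ or $L_\xx\inv$ to both sides of \eqref{E:LDBis}; your lemma packages those manipulations once and for all, and it makes transparent \emph{why} the four laws collapse to one: the instances involving~$\opbar$ are formal consequences of the $(\op,\op)$ instance plus bijectivity, rather than independent constraints. This is a genuine, if modest, gain in clarity over the computation the paper leaves to the reader.
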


We skip the (easy) verification, which can be found for instance in~\cite{Dhy}. It is then natural to introduce a terminology for those LD-systems that satisfy the additional laws of Lemma~\ref{L:Translation}.

\begin{defi}\cite{FeR, Joy}
\label{D:Rack}
An LD-system~$(\SS, \op)$ in which all left-translations are bijective---or, equivalently, a structure $(\SS, \op, \opbar)$ where \eqref{E:RII} and~\eqref{E:RIII} are obeyed---is called a \emph{rack}. An idempotent rack, that is, a rack satisfying~\eqref{E:Idem}, is called a \emph{quandle}.
\end{defi}

\begin{rema}
Various names appear for the above structures in literature. For instance, racks are called \emph{automorphic sets} in the early source~\cite{Bri}, whereas the terms \emph{LD-quasigroup} and \emph{LDI-quasigroups} would be coherent with the standards the algebra community for rack and quandle respectively. More importantly, the most common convention in the topology community is to appeal to the opposite operations, namely to define colourings by the rules
\begin{equation}
\label{E:RulesOpp}
\VR(6,6)\begin{picture}(55,0)(0,4)
\psbezier[linewidth=2pt,border=3pt](0,0)(4,0)(4,8)(8,8)
\psbezier[linewidth=2pt,border=3pt](0,8)(4,8)(4,0)(8,0)
\pcline[linewidth=2pt,border=3pt](-1,0)(0,0)
\pcline[linewidth=2pt,border=3pt](-1,8)(0,8)
\pcline[linewidth=2pt,border=3pt]{->}(8,8)(10,8)
\pcline[linewidth=2pt,border=3pt]{->}(8,0)(10,0)
\put(-4,7.5){$\bb$}
\put(-4,-0.5){$\aa$}
\put(11.5,7.5){$\aa \opt \bb$}
\put(11.5,-0.5){$\bb$}
\put(23,3){and}
\psbezier[linewidth=2pt,border=3pt](40,8)(44,8)(44,0)(48,0)
\psbezier[linewidth=2pt,border=3pt](40,0)(44,0)(44,8)(48,8)
\psline[linewidth=2pt,border=3pt](39,0)(40,0)
\psline[linewidth=2pt,border=3pt](39,8)(40,8)
\psline[linewidth=2pt,border=3pt]{->}(48,8)(50,8)
\psline[linewidth=2pt,border=3pt]{->}(48,0)(50,0)
\put(36,7.5){$\bb$}
\put(36,-0.5){$\aa$}
\put(51.5,7.5){$\aa$}
\put(51.5,-0.5){$\aa \optbar \bb\quad.$}
\end{picture}
\end{equation}
The effect of these conventions is to replace left-selfdistributivity with its right counterpart (RD) $(\xx \opt \yy) \opt \zz = (\xx \opt \zz) \opt (\yy \opt \zz)$ everywhere and, of course, to consider right-translations in the definition of a rack. Because of our specific interest in the Laver tables here, we shall stick to \eqref{E:Rules} and the law~(LD) here. To avoid ambiguity, we use $\op$ and~$\opbar$ as a generic notation for LD-operations, thus keeping $\opt$ and $\optbar$ for RD-operations. Of course, the transpose~$\widetilde\AA_\nn$ of the Laver table~$\AA_\nn$ is an RD-system with $2^\nn$~elements.
\end{rema}

\subsubsection*{Using colourings: case of braids}

Using diagram colourings takes different forms according to whether the considered diagram is open (braid diagram) or closed (link diagram). We begin with the case of braids.

An \emph{$\mm$-strand geometric braid} is a family of $\mm$ open curves embedded in $\RRRR^2 \times [0, 1]$ such that the family of initial points is $\{(0, \ii, 0) \mid \ii = 1 \wdots \mm\}$, the family of final points is $\{(0, \ii, 1) \mid \ii = 1 \wdots \mm\}$, and, for every~$\tt$, the intersection with the plane $\zz = \tt$ consists of $\mm$~points exactly. A \emph{braid} is an isotopy class of geometric braids, referring here to isotopies of $\RRRR^2 \times [0, 1]$ that leave the planes $\RRRR^2 \times \{0\}$ and $\RRRR^2 \times \{1\}$ fixed. Projecting a geometric braid on the plane $\xx = 0$ gives a diagram like the one on the right of Figure~\ref{F:Diagrams}: the specificity is that there exists a fixed orientation so that the diagrams consists of $\mm$~arcs going from the line $\xx = 0$ to the line $\xx = 1$ in such a way that the $\xx$~coordinate keeps increasing (no U-turn). 

{\rightskip30mm
Concatenating $\mm$~strand geometric braids induces (after rescaling) a well-defined product on $\mm$~strand braids, which turns to provide a group structure as, by Reidemeister moves of type~II, the concatenation of a geometric braid and its image in a vertical mirror is isotopic to the trivial braid, a collection of horizontal segments. Calling $\sig\ii$ the (class of the geometric) braid that projects as shown on the right, one easily shows that the group~$\BR\mm$ of all $\mm$-strand braids is generated by $\sig1 \wdots \sig{\mm-1}$.\par
\hfill
\begin{picture}(0,0)(16,-7)
\pcline[linewidth=2pt,border=3pt]{->}(0,0)(10,0)
\tlput{$1$}
\pcline[linewidth=2pt,border=3pt]{->}(0,8)(10,8)
\pcline[linewidth=2pt,border=3pt](0,12)(0,12)
\tlput{$\ii$}
\psbezier[linewidth=2pt,border=3pt](0,12)(4,12)(4,16)(8,16)
\pcline[linewidth=2pt,border=3pt](0,16)(0,16)
\tlput{$\ii{+}1$}
\psbezier[linewidth=2pt,border=3pt](0,16)(4,16)(4,12)(8,12)
\pcline[linewidth=2pt,border=3pt]{->}(8,12)(10,12)
\pcline[linewidth=2pt,border=3pt]{->}(8,16)(10,16)
\pcline[linewidth=2pt,border=3pt]{->}(0,20)(10,20)
\pcline[linewidth=2pt,border=3pt]{->}(0,28)(10,28)
\tlput{$\mm$}
\put(4,2.2){$\vdots$}
\put(4,22.2){$\vdots$}
\put(11,13){$\left.\vbox to 16mm{}\right\} \sig\ii$}
\end{picture}}
\vspace{-3.5mm}

It was then proved by E.\,Artin in~\cite{ArtHamburg, Art} that the group~$\BR\mm$ admits the presentation
\begin{equation}
\label{E:BraidPres}
\bigg\langle \sig1 \wdots \sig{n-1} \ \bigg\vert\ 
\begin{matrix}
\sig\ii \sig\jj = \sig\jj \sig\ii 
&\text{for} &\vert \ii-\jj\vert\ge 2\\
\sig\ii \sig\jj \sig\ii = \sig\jj \sig\ii \sig\jj 
&\text{for} &\vert\ii-\jj\vert = 1
\end{matrix}
\ \bigg\rangle,
\end{equation}
and, by F.A.\,Garside in~\cite{Gar}, that the submonoid~$\BP\mm$ of~$\BR\mm$ generated by~$\sig1 \wdots \sig{\mm-1}$ admits, as a monoid, the presentation~\eqref{E:BraidPres}. The elements of the monoid~$\BP\mm$ are called \emph{positive} $\mm$-strand braids. By definition, they can be represented by braid diagrams in which all crossings have the same orientation.

Using a fixed structure $(\SS, \op, \opbar)$ to colour the strands of an $\SS$-strand braid diagram using the rules~\eqref{E:Rules} provides a map from~$\SS^\mm$ to itself, namely the map that associates the sequence of output colours to the sequence of input colours. By Lemma~\ref{L:Translation}, this map is isotopy-invariant whenever $(\SS, \op)$ is a quandle. Actually, due to the definition of braids with U-turns forbidden, Reidemeister moves of type~I are impossible, and it is sufficient to use racks that need not be quandles. Similarly, when one considers positive braids, Reidemeister moves of type~II are impossible, and using general LD-systems becomes possible. As, by very definition, colourings are compatible with the product of braids, Lemma~\ref{L:Translation} takes the form:

\begin{lemm}[Brieskorn \cite{Bri}]
\label{L:Action}
\ITEM1 Assume that $(\SS, \op)$ is a rack. Then putting
\begin{align} 
\label{E:Action}
(\aa_1 \wdots \aa_\mm) \act \sig\ii 
&= (\aa_1 \wdots \aa_{\ii-1}, \aa_\ii \op \aa_{\ii+1}, \aa_\ii, \aa_{\ii+2} \wdots \aa_\mm),\\
\label{E:ActionInv}
(\aa_1 \wdots \aa_\mm) \act \siginv\ii 
&= (\aa_1 \wdots \aa_{\ii-1}, \aa_{\ii+1}, \aa_\ii \opbar \aa_{\ii+1}, \aa_{\ii+2} \wdots \aa_\mm)
\end{align}
defines an action (on the right) of the group~$\BR\mm$ on~$\SS^\mm$.

\ITEM2 Assume that $(\SS, \op)$ is an LD-system. Then \eqref{E:Action} defines an action of the monoid~$\BP\mm$ on~$\SS^\mm$.
\end{lemm}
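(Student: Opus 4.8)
The plan is to reduce the claim that \eqref{E:Action} and \eqref{E:ActionInv} define a group (\resp monoid) action to the verification that the defining relations of the presentation~\eqref{E:BraidPres} are respected. Since $\BR\mm$ is presented by generators $\sig1 \wdots \sig{\mm-1}$ subject to the far-commutativity and braid relations, and $\BP\mm$ by the same relations as a monoid, an action is determined by specifying where each generator sends a tuple, provided the prescribed generator-maps satisfy exactly those relations. So the first step is to check that the map $\act\sig\ii$ defined by~\eqref{E:Action} is well-defined on~$\SS^\mm$; this is immediate as it only rewrites the $\ii$th and $(\ii{+}1)$st entries using~$\op$. For part~\ITEM1, I would then verify that $\act\siginv\ii$ given by~\eqref{E:ActionInv} is a genuine two-sided inverse of $\act\sig\ii$: composing the two maps must return every tuple to itself. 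This is precisely where the rack hypothesis enters, since the two composites unwind to the identities $\aa\op(\aa\opbar\bb)=\bb$ and $\aa\opbar(\aa\op\bb)=\bb$ of~\eqref{E:RII}, which hold exactly because all left-translations of~$(\SS,\op)$ are bijective with inverse~$\opbar$ (this is the content of~\eqref{E:Inverse}).

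With the generators and their inverses in hand, the heart of the argument is checking the two families of braid relations on tuples. The far-commutativity relation $\sig\ii\sig\jj=\sig\jj\sig\ii$ for $\vert\ii-\jj\vert\ge2$ is trivial: the maps $\act\sig\ii$ and $\act\sig\jj$ alter disjoint pairs of coordinates, so they commute as operations on~$\SS^\mm$ with no algebraic input whatsoever. The substantive relation is the braid relation $\sig\ii\sig\jj\sig\ii=\sig\jj\sig\ii\sig\jj$ for $\vert\ii-\jj\vert=1$. Here I would fix the index and write out, coordinate by coordinate, the effect of each of the two triple-composites on a generic tuple $(\aa_\ii,\aa_{\ii+1},\aa_{\ii+2})$ (the remaining entries are untouched). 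Both sides produce tuples whose entries are parenthesized $\op$-products of $\aa_\ii,\aa_{\ii+1},\aa_{\ii+2}$, and the two outputs agree in every slot precisely when the left-selfdistributivity law~\eqref{E:LDBis}, namely $\xx\op(\yy\op\zz)=(\xx\op\yy)\op(\xx\op\zz)$, is invoked once to rewrite the critical entry.

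For part~\ITEM2 the argument is the same but lighter. Since $\BP\mm$ is presented as a monoid by the very relations of~\eqref{E:BraidPres}, and a monoid action is determined by maps on generators satisfying those relations, I only need the commutativity and braid identities above; no inverses are required, so the rack hypothesis can be weakened to a bare LD-system and~\eqref{E:Action} alone suffices. In both parts the only genuinely nontrivial verification is the braid relation, and I expect that to be the main obstacle --- not because it is deep, but because it requires tracking the nested $\op$-products carefully through three crossings and applying~\eqref{E:LDBis} at exactly the right spot; the pictorial check is in fact just the type~III panel of Figure~\ref{F:Translation} read as an action on the input colours, which is why the statement is presented as a direct corollary of Lemma~\ref{L:Translation}.
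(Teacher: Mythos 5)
Your proposal is correct and takes essentially the same route as the paper: the paper states this lemma as an immediate reformulation of Lemma~\ref{L:Translation} (whose proof is the diagrammatic check of Figure~\ref{F:Translation}) using the fact that colourings are compatible with concatenation of braid diagrams, and your presentation-based verification---invertibility of $\act\sig\ii$ via~\eqref{E:RII}, trivial far-commutativity, and one application of~\eqref{E:LDBis} for the braid relation---is exactly that same computation made explicit against the relations of~\eqref{E:BraidPres}. Your closing remark that the substantive check is the type~III panel of Figure~\ref{F:Translation} read as an action on input colours is precisely the paper's point of view.
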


The action of Lemma~\ref{L:Action} is called the \emph{Hurwitz action}. Using classical examples of racks then leads to no less classical examples of braid invariants. For instance, considering $\ZZZZ$ equipped with the operation $\xx \op \yy = \yy+1$ leads to the augmentation homomorphism from~$\BR\mm$ to~$(\ZZZZ, +)$, whereas considering a $\ZZZZ[t, t\inv]$-module equipped with the binary operations $\xx \op \yy = (1 - t)\xx  + t \yy$ leads to a linear representation of~$\BR\mm$ into $\GL_n(\ZZZZ[t, t\inv])$, the (unreduced) \emph{Burau representation} of~$\BR\mm$. Similarly, considering a rank~$\mm$ free group~$\FF_\mm$ equipped with the conjugacy operation $\xx \op \yy = \xx\yy\xx\inv$ leads to a (faithful) representation of~$\BR\mm$ in~$\Aut(\FF_\mm)$, the \emph{Artin representation}.

\subsubsection*{Using colourings: case of links and knots}

An (oriented) \emph{$\mm$-component geometric link} is a family of $\mm$ disjoint closed curves embedded in $\RRRR^3$. A \emph{link} is an isotopy class of geometric links, referring here to isotopies of $\RRRR^3$. Knots are links with one component. Projecting geometric links to a plane keeping track of the orientation of crossings and avoiding triple points and tangencies yields a link diagram. As already said, two diagrams represent the same link if and only if they can be transformed into each other by means of Reidemeister moves. 

At least two different approaches have been developed in order to use selfdistributive structures to construct link invariants via the colouring approach. Developed by D.\,Joyce \cite{Joy} and S.\,Matveev~\cite{Mat}, the first one consists in attaching to every diagram a specific quandle that will capture the topology of the link it represents: assuming that the considered link diagram is the closure~$\widehat\DD$ of an $\mm$-strand braid diagram~$\DD$ (see Figure~\ref{F:FundQuandle}), one uses $\mm$~letters $\aa_1 \wdots \aa_\mm$ to colour the input ends of the braid diagram, one propagates the colours throughout the diagram resulting in $\mm$ output colours $\tt_1 \wdots \tt_\mm$ which are formal combinations of $\aa_1 \wdots \aa_\mm$ by means of two formal operations~$\op, \opbar$, and one defines the \emph{fundamental quandle}~$\QQ_\DD$ to be the quandle that admits the presentation $\langle \aa_1 \wdots \aa_\mm \mid \tt_1 = \aa_1 \wdots \tt_\mm = \aa_\mm\rangle$. The quandle laws imply that $\QQ_\DD$ only depends on the isotopy class of~$\widehat\DD$, and it captures almost all topological information about the link represented by~$\DD$ as it is a complete invariant of the isotopy type up to a mirror symmetry. In practice, determining the fundamental quandle effectively is possible only in simple particular cases~\cite{NiePrz}, so one tends to consider more simple structures, typically quotients of the fundamental quandle like the Alexander quandle from which the Alexander polynomial can be read~\cite{FRSJames, FRSRack}. 

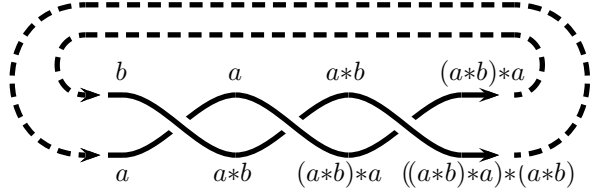
\begin{figure}[htb]
\begin{picture}(110,24)(0,-2)
\psline[linewidth=2pt,border=3pt](28,0)(30,0)
\psline[linewidth=2pt,border=3pt](28,8)(30,8)
\psbezier[linewidth=2pt,border=3pt](30,0)(35,0)(40,8)(45,8)
\psbezier[linewidth=2pt,border=3pt](30,8)(35,8)(40,0)(45,0)
\psbezier[linewidth=2pt,border=3pt](45,0)(50,0)(55,8)(60,8)
\psbezier[linewidth=2pt,border=3pt](45,8)(50,8)(55,0)(60,0)
\psbezier[linewidth=2pt,border=3pt](60,0)(65,0)(70,8)(75,8)
\psbezier[linewidth=2pt,border=3pt](60,8)(65,8)(70,0)(75,0)
\psline[linewidth=2pt,border=3pt]{->}(75,0)(80,0)
\psline[linewidth=2pt,border=3pt]{->}(75,8)(80,8)
\put(29,-3.5){$\aa$}
\put(29,10){$\bb$}
\put(42,-3.5){$\aa{\op}\bb$}
\put(44,10){$\aa$}
\put(53,-3.5){$(\aa{\op}\bb){\op}\aa$}
\put(57,10){$\aa{\op}\bb$}
\put(67,-3.5){$(\!(\aa{\op}\bb){\op}\aa){\op}(\aa{\op}\bb)$}
\put(72,10){$(\aa{\op}\bb){\op}\aa$}
\psbezier[linewidth=2pt,border=3pt,linestyle=dashed](82,0)(95,0)(95,20)(82,20)
\psbezier[linewidth=2pt,border=3pt,linestyle=dashed](82,8)(87,8)(87,16)(82,16)
\psline[linewidth=2pt,border=3pt,linestyle=dashed](25,20)(82,20)
\psline[linewidth=2pt,border=3pt,linestyle=dashed](82,16)(25,16)
\psbezier[linewidth=2pt,border=3pt,linestyle=dashed](25,0)(12,0)(12,20)(25,20)
\psbezier[linewidth=2pt,border=3pt,linestyle=dashed](25,8)(20,8)(20,16)(25,16)
\psline[linewidth=2pt,border=3pt]{->}(25,0)(27,0)
\psline[linewidth=2pt,border=3pt]{->}(25,8)(27,8)
\end{picture}
\caption{\sf\smaller A link diagram for the trefoil knot of Figure~\ref{F:Diagrams} that is the closure (dashed lines) of a braid diagram, here $\sigg13$; the fundamental quandle of the knot is the quandle whose presentation is obtained by equating the labels on the left- and right-ends, here $\langle\aa, \bb \mid ((\aa{\op}\bb){\op}\aa){\op}(\aa{\op}\bb) = \aa, (\aa{\op}\bb){\op}\aa = \bb\rangle$, which is also $\langle\aa, \bb \mid \bb{\op}(\aa{\op}\bb) = \aa, (\aa{\op}\bb){\op}\aa = \bb\rangle$, or, more symmetrically, $\langle\aa, \bb, \cc \mid \aa{\op}\bb= \cc, \bb{\op}\cc = \aa, \cc{\op}\aa = \bb\rangle$. By projecting the fundamental quandle to a group, that is, interpreting~$\op$ as a conjugacy operation, one obtains the Wirtinger presentation of the fundamental group of the complement of the knot, here the group $\langle \aa, \bb \mid \aa\bb\aa = \bb\aa\bb\rangle$ in which we recognize~$\BR3$.}
\label{F:FundQuandle}
\end{figure}

The second approach consists, as in the case of braids, in fixing one auxiliary quandle (the same for all diagrams) and using it to define topological invariants. Here applications are so numerous that we can only be extremely sketchy and refer for instance to the survey~\cite{CarSurv} for a better account and a more complete bibliography. Typically, if $\SS$ is a finite quandle, one can count how many $\SS$-colourings exist. More precisely, the value of the \emph{quandle counting} invariant for a link~$\LL$ is defined to be the number of homomorphisms from the fundamental quandle~$\QQ_\LL$ to~$\SS$. It is shown in~\cite{Ino, HaNe} that the counting invariants associated with certain explicit family of quandles lead to classical link invariants like the linking number or the Alexander polynomial.

\subsubsection*{The case of Laver tables}

Laver tables are very far from all racks and quandles that have been mentioned above---and, much more generally, from those that have been used so far. By the way, Laver tables are LD-systems, but they are \emph{not} racks nor \emph{a fortiori} quandles: as asserted in Theorem~\ref{T:Tables}\ITEM3, the period of~$2^\nn -1$ in~$\AA_\nn$ is~$1$, meaning that the row of~$2^\nn-1$ is constant (with value~$2^\nn$) and, for $\nn \ge 1$, the associated left-translations is very far from bijective. So, the only direct application is the existence of an Hurwitz action for positive braids:

\begin{lemm}
\label{L:ActionBis}
For every~$\nn$, putting
\begin{equation} 
\label{E:LaverAction}
(\aa_1 \wdots \aa_\mm) \act \sig\ii = (\aa_1 \wdots \aa_{\ii-1}, \aa_\ii \op \aa_{\ii+1}, \aa_\ii, \aa_{\ii+2} \wdots \aa_\mm)
\end{equation}
defines an action of the monoid~$\BP\mm$ on~$\AA_\nn^\mm$.
\end{lemm}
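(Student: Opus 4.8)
The plan is to recognize the statement as a direct instance of Lemma~\ref{L:Action}\ITEM2. By Theorem~\ref{T:Tables}\ITEM1, the binary system $\AA_\nn = (\{1 \wdots 2^\nn\}, \op)$ is an LD-system, since $2^\nn$ is a power of~$2$; and formula~\eqref{E:LaverAction} is literally formula~\eqref{E:Action} specialised to $\SS = \AA_\nn$. Hence Lemma~\ref{L:Action}\ITEM2 applies verbatim and already delivers the asserted action of the monoid~$\BP\mm$ on~$\AA_\nn^\mm$. So at this level there is nothing to prove beyond invoking Laver's theorem to supply left-selfdistributivity.

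For completeness I would spell out the underlying verification. By Garside's theorem, $\BP\mm$ admits, as a monoid, the presentation~\eqref{E:BraidPres}; by the universal property of monoid presentations, a right action on~$\AA_\nn^\mm$ is determined by assigning to each generator~$\sig\ii$ the self-map~$R_\ii$ of~$\AA_\nn^\mm$ given by the right-hand side of~\eqref{E:LaverAction}, provided the relations of~\eqref{E:BraidPres} hold in the action, that is, provided $(\aa) \act u = (\aa) \act v$ for every defining relation $u = v$ and every~$\aa$ in~$\AA_\nn^\mm$. First I would dispatch the far-commutation relations: for $|\ii - \jj| \ge 2$ the maps~$R_\ii$ and~$R_\jj$ read and alter the disjoint coordinate pairs $\{\ii, \ii+1\}$ and $\{\jj, \jj+1\}$, so they commute and $\sig\ii \sig\jj = \sig\jj \sig\ii$ is respected with no use of any algebraic law.

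The only relation carrying content is the braid relation $\sig\ii \sig{\ii+1} \sig\ii = \sig{\ii+1} \sig\ii \sig{\ii+1}$. As it involves only coordinates $\ii, \ii+1, \ii+2$, it suffices to take $\mm = 3$ and a triple $(\aa, \bb, \cc)$; applying~\eqref{E:LaverAction} three times on each side yields
\begin{gather*}
(\aa, \bb, \cc) \act \sig1 \sig2 \sig1 = ((\aa \op \bb) \op (\aa \op \cc), \aa \op \bb, \aa), \\
(\aa, \bb, \cc) \act \sig2 \sig1 \sig2 = (\aa \op (\bb \op \cc), \aa \op \bb, \aa),
\end{gather*}
so the two outputs coincide exactly when $(\aa \op \bb) \op (\aa \op \cc) = \aa \op (\bb \op \cc)$, which is precisely the LD-law~\eqref{E:LDBis} holding in~$\AA_\nn$. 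I expect no genuine obstacle: the entire substance is the observation that the braid relation translates into left-selfdistributivity, and this is automatic the instant one knows, from Theorem~\ref{T:Tables}, that $\AA_\nn$ is an LD-system. In effect the lemma records the positive-braid half of the colouring dictionary of Lemma~\ref{L:Translation} applied to the one supply of examples---furnished by Laver's theorem---that are genuinely LD-systems without being racks.
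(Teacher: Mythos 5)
Your proposal is correct and matches the paper's own (implicit) argument exactly: the paper presents Lemma~\ref{L:ActionBis} as the direct specialisation of Lemma~\ref{L:Action}\ITEM2 to $\SS = \AA_\nn$, which is an LD-system by Theorem~\ref{T:Tables}\ITEM1. Your additional verification via the monoid presentation~\eqref{E:BraidPres}---far commutators for free, braid relation reducing to the LD-law on a triple $(\aa,\bb,\cc)$---is accurate and simply makes explicit what the paper leaves to the cited lemma.
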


The problem now is the failure of the laws \eqref{E:RI} and~\eqref{E:RII}, which respectively correspond to Reidemeister moves of types~I and~II. As for Reidemeister moves of type~I, we know that the problem vanishes if we restrict to braids; in the case of links, forgetting type~I amounts to restricting to what is called \emph{regular isotopy}, corresponding to considering framed links in which, in addition to the strands, a distinguished orthogonal direction is fixed at each point. The overall conclusion is that the failure of~\eqref{E:RI} alone does not discard topological applications. By the way, a number of recent works consist in extending to general racks some results first established in the particular case of quandles, see for instance~\cite{Nel, ChNe, NeWi}.

The failure of~\eqref{E:RII} is a more serious obstruction since it \emph{a priori} discards the existence of an Hurwitz action for arbitrary braids. However, it turns out that, at least in good cases, the problem can be solved. So assume that $(\SS, \op)$ is an LD-system. We do not assume that $(\SS, \op)$ is a rack, but we assume for a while that $(\SS, \op)$ is left-cancellative, that is, the left-translations of~$\op$ are injective. Then using for the negative crossings the colouring rule
\begin{equation*}
\VR(6,5)\begin{picture}(80,0)(0,4)
\psbezier[linewidth=2pt,border=3pt](0,8)(4,8)(4,0)(8,0)
\psbezier[linewidth=2pt,border=3pt](0,0)(4,0)(4,8)(8,8)
\pcline[linewidth=2pt,border=3pt](-1,0)(0,0)
\pcline[linewidth=2pt,border=3pt](-1,8)(0,8)
\pcline[linewidth=2pt,border=3pt]{->}(8,8)(10,8)
\pcline[linewidth=2pt,border=3pt]{->}(8,0)(10,0)
\put(-4,7.5){$\bb$}
\put(-4,-0.5){$\aa$}
\put(11.5,7.5){the unique $\cc$ satisfying $\aa \op \cc = \bb$, if such one exists}
\put(11.5,-0.5){$\bb$}
\end{picture}
\end{equation*}
enables one to define a \emph{partial} Hurwitz action, in the sense that $\vec\aa \act \ww$ need not be defined for every sequence~$\vec\aa$ in~$\SS^\mm$ and every $\mm$-strand braid word~$\ww$: not all sequences of initial colours can be propagated throughout the braid diagram. Then the point is the following (absolutely nontrivial) result:  

\begin{lemm}\cite{Dfb} 
\label{L:PartialAction}
Assume that $(\SS, \op)$ is a left-cancellative LD-system.

\ITEM1 For all $\mm$-strand braid words $\ww_1 \wdots \ww_\pp$, there exists at least one sequence~$\vec\aa$ in~$\SS^\mm$ such that $\vec\aa \act \ww_\ii$ is defined for every~$\ii$.

\ITEM2 If $\ww, \ww'$ are equivalent $\mm$-strand braid words, and $\vec\aa$ is a sequence in~$\SS^\mm$ such that both $\vec\aa \act \ww$ and $\vec\aa \act \ww'$
are defined, then the latter sequences are equal.
\end{lemm}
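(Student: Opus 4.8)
The plan is to encode each braid word $w$ as a partial self-map $T_w$ of $\SS^\mm$, obtained by composing the per-crossing rules, so that $\vec\aa\act w = T_w(\vec\aa)$ and $T_{ww'}=T_{w'}\circ T_w$. Left-cancellativity will play two distinct roles. First, it forces the solution~$\cc$ of $\aa\op\cc=\bb$ to be unique when it exists, so that $\opbar$ is a genuine partial operation and $T_{\siginv\ii}$ is well defined. Second, reading off~\eqref{E:Action} one checks at once that each $T_{\sig\ii}$ is \emph{injective}; hence $T_\pp$ is a total injective map for every positive word~$\pp$, and $T_{\pp\inv}$ is exactly its partial inverse, defined on $\mathrm{im}(T_\pp)$. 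In these terms \ITEM1 asserts that $\bigcap_\ii\mathrm{dom}(T_{w_\ii})\ne\emptyset$, and \ITEM2 asserts that $T_w$ and $T_{w'}$ agree on $\mathrm{dom}(T_w)\cap\mathrm{dom}(T_{w'})$ whenever $w\equiv w'$.

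For~\ITEM1 I would first reduce to the free LD-system~$\mathbf F$: any assignment of its generators extends to a homomorphism $\phi\colon\mathbf F\to\SS$, and because $\SS$ is left-cancellative such a $\phi$ carries a defined action to a defined action, since $\aa\op\cc=\bb$ forces $\phi(\aa)\op\phi(\cc)=\phi(\bb)$ and uniqueness then gives $\phi(\cc)=\phi(\aa)\opbar\phi(\bb)$. Thus it suffices to produce a common sequence in~$\mathbf F^\mm$. There I would search for it among the sequences $\vec\bb\act Q$ obtained by letting a large \emph{positive} braid~$Q$ act on a base sequence~$\vec\bb$: since $\BP\mm$ admits common right-multiples, one can choose a single~$Q$ (for instance a high power of~$\Delta^2$) dominating the denominators of all the~$w_\ii$, and the point is then that such a fully expanded sequence should admit every division that reading the words~$w_\ii$ requires.

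For~\ITEM2 the naive strategy---transform $w$ into $w'$ by the relations of~\eqref{E:BraidPres} and free reductions, checking invariance move by move---fails, because an intermediate word may carry a colouring that does not propagate; this partiality is the whole difficulty. Instead I would route the equivalence through word reversing. A reversing step rewrites a factor $\siginv\ii\sig\jj$ into positive-then-negative form using the relation attached to~$\vert\ii-\jj\vert$, or deletes a factor $\siginv\ii\sig\ii$. The key lemma to establish is that each such step can only enlarge the domain while preserving values, that is $\mathrm{dom}(T_w)\subseteq\mathrm{dom}(T_{w_1})$ and $T_{w_1}$ restricts to $T_w$ on $\mathrm{dom}(T_w)$ after a step $w\to w_1$. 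Granting this, completeness of reversing for braids---$w\equiv w'$ if and only if both reverse to a common word~$w^*$---yields $T_w=T_{w^*}|_{\mathrm{dom}(T_w)}$ and likewise for~$w'$, so the two maps agree on $\mathrm{dom}(T_w)\cap\mathrm{dom}(T_{w'})\subseteq\mathrm{dom}(T_{w^*})$, which is precisely~\ITEM2.

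The main obstacle is the reversing-step lemma in the braid case $\vert\ii-\jj\vert=1$: one must show that whenever the single division required for $\siginv\ii\sig\jj$ is possible at~$\vec\aa$, the two divisions required after reversing are also possible and return the same sequence. This is a finite identity, but it genuinely uses the LD-law together with cancellation, and it belongs to the same circle of ideas---ultimately the injectivity and acyclicity properties of free LD-systems---that also underlies the divisibility of the expanded sequences needed in~\ITEM1. I expect everything else to be routine bookkeeping once these two points are secured.
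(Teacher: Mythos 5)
The paper does not actually prove this lemma: it states it as ``absolutely nontrivial'' and cites~\cite{Dfb}, where the argument runs through word reversing and the Garside structure of~$\BP\mm$. Measured against that argument, your skeleton for~\ITEM2 is essentially the right one, but both parts of your proposal have a genuine gap.

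Concerning~\ITEM2: your key lemma is correct, and it is exactly the finite (LD)-plus-cancellation computation you anticipate. For instance, for $\ww = \siginv\ii\sig{\ii+1}$ with colours $(\aa,\bb,\cc)$ on positions $\ii,\ii+1,\ii+2$ and $\bb\op\ee=\aa$, both $\ww$ and its reversal $\sig{\ii+1}\sig\ii\siginv{\ii+1}\siginv\ii$ return $(\bb,\ee\op\cc,\ee)$, the extra division needed after reversing being possible because $\aa\op(\bb\op\cc)=(\bb\op\ee)\op(\bb\op\cc)=\bb\op(\ee\op\cc)$. The gap is the completeness statement you invoke: it is \emph{false} that $\ww\equiv\ww'$ forces $\ww$ and $\ww'$ to reverse to a common word---$\sig1\siginv1$ and the empty word are equivalent, both terminal for right reversing, and distinct. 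What replaces it is Garside theory: right-reverse $\ww,\ww'$ to fractions $N D^{-1}$, $N'D'^{-1}$ with $N,D,N',D'$ positive; since $\BP\mm$ is cancellative with common right multiples and embeds in~$\BR\mm$, equality of the fractions yields positive words $x,y$ with $Nx\equiv N'y$ and $Dx\equiv D'y$ as positive words; then $T_{ND^{-1}}=T_{(Dx)^{-1}}\circ T_{Nx}=T_{(D'y)^{-1}}\circ T_{N'y}=T_{N'D'^{-1}}$, using Lemma~\ref{L:ActionBis} (positively equivalent positive words give the same total map), the fact that $T_{u^{-1}}$ is the partial inverse of~$T_u$, and $T_{xx^{-1}}=\mathrm{id}$. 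This Garside input is precisely what your ``common word $w^*$'' elides, and it cannot be avoided.

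Concerning~\ITEM1 the gap is structural. Your right-reversing lemma gives $\mathrm{dom}(T_\ww)\subseteq\mathrm{dom}(T_{\ww_1})$, an \emph{upper} bound on $\mathrm{dom}(T_\ww)$, so it can never exhibit points of~$\mathrm{dom}(T_\ww)$; and the detour through the free LD-system does not supply them either: it imports the deep theorem that free LD-systems are left-cancellative, yet still leaves unproved the assertion that expanded sequences ``admit every division that reading the $\ww_\ii$ requires''---which is the entire content of~\ITEM1. What is actually needed is the mirror step lemma: a \emph{left}-reversing step (pushing negative letters leftward) \emph{shrinks} domains while preserving values, so that if $\ww_\ii$ left-reverses to $u_\ii^{-1}v_\ii$ with $u_\ii,v_\ii$ positive, then $\mathrm{im}(T_{u_\ii})=\mathrm{dom}(T_{u_\ii^{-1}})\subseteq\mathrm{dom}(T_{\ww_\ii})$. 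One then takes $Q$ a common \emph{left} multiple of $u_1\wdots u_\pp$ (a high power of~$\Delta$ does serve): from $Q\equiv z_\ii u_\ii$ and $T_{z_\ii u_\ii}=T_{u_\ii}\circ T_{z_\ii}$ one gets $\mathrm{im}(T_Q)\subseteq\bigcap_\ii\mathrm{im}(T_{u_\ii})$, so every $\vec\bb\act Q$ colours all the~$\ww_\ii$. Note that it is left multiples, not the right multiples you invoke, that make the images nest, and that this argument lives entirely inside~$\SS$: no free LD-systems and no acyclicity are needed anywhere in the lemma. A last, minor point: your reading of $T_{\siginv\ii}$ as the partial inverse of $T_{\sig\ii}$ is the correct one, but beware that \eqref{E:ActionInv} as printed has the arguments of~$\opbar$ transposed (compare Figure~\ref{F:Translation} and law~\eqref{E:RII}); the rule must read $\vec\aa\act\siginv\ii=(\aa_1\wdots\aa_{\ii-1},\aa_{\ii+1},\aa_{\ii+1}\opbar\aa_\ii,\aa_{\ii+2}\wdots\aa_\mm)$.
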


In other words, although $(\SS, \op)$ is not assumed to be a rack, one obtains an action that is partial but still enjoys good invariance properties. Applying this approach in the case when $(\SS, \op)$ is a free LD-system directly led to the orderability of the group~$\BR\mm$ in~\cite{Dfb}: free LD-systems are orderable, in the sense that there exists a linear ordering satisfying $\aa < \aa\op\bb$ for all~$\aa, \bb$, and using the associated colourings naturally leads to ordering braids: a braid~$\br$ is declared smaller than another braid~$\br'$ if, for some/any sequence~$\vec\aa$ such that both $\vec\aa \act \br$ and $\vec\aa \act \br'$ are defined, the sequence $\vec\aa \act \br$ is smaller than the sequence $\vec\aa \act \br'$ with respect to the lexicographical ordering on~$\SS^\mm$.

Laver tables are not left-cancellative, hence they are not directly eligible for Lemma~\ref{L:PartialAction} and further tricks will have to be developed in order to use them for colourings. A natural but probably too naive approach could be to use fractionary decompositions of braids: every $\mm$-strand braid~$\br$ can be expressed as a quotient $\br_1\inv \br_2$ where $\br_1$ and~$\br_2$ are positive $\mm$-strand braids, and the decomposition is unique if one requires in addition that $\br_1$ and~$\br_2$ admit no common left-divisor in the monoid~$\BP\mm$. Whenever $(\SS, \op)$ is an LD-system, the sequences $\vec\aa \act \br_1$ and $\vec\aa \act \br_2$ are defined for every sequence~$\vec\aa$ in~$\SS^\mm$ and, therefore, the pair $(\vec\aa \act \br_1, \vec\aa \act \br_2)$, which depends only on~$\vec\aa$ and~$\br$, could be used as a (sort of) colouring for~$\br$, see Figure~\ref{F:ColLaver} for an example. Alternatively, every braid in~$\BR\mm \setminus \BP\mm$ admits a unique expression as $\Delta_\mm^{-\dd} \br_0$ where $\Delta_\mm$ is Garside's fundamental $\mm$-strand braid, $\dd$ is a positive integer and $\br_0$ is a positive braid that is not left-divisible by~$\Delta_\mm$ in~$\BP\mm$, and one could use the pair $(\vec\aa \act \Delta_\mm^\dd, \vec\aa \act \br_0)$ as another colouring for~$\br$.

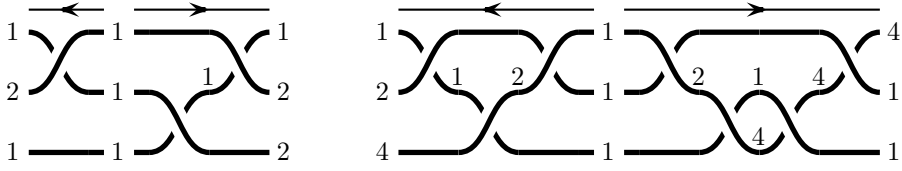
\begin{figure}[htb]
\begin{picture}(48,22)(0,1)
\psbezier[linewidth=2pt,border=3pt](0,16)(4,16)(4,8)(8,8)
\psbezier[linewidth=2pt,border=3pt](0,8)(4,8)(4,16)(8,16)
\psline[linewidth=2pt,border=3pt](0,0)(8,0)
\psline[linewidth=2pt,border=3pt](8,0)(10,0)
\psline[linewidth=2pt,border=3pt](8,8)(10,8)
\psline[linewidth=2pt,border=3pt](8,16)(10,16)
\psline[linewidth=2pt,border=3pt](14,0)(16,0)
\psline[linewidth=2pt,border=3pt](14,8)(16,8)
\psline[linewidth=2pt,border=3pt](14,16)(16,16)
\psbezier[linewidth=2pt,border=3pt](16,0)(20,0)(20,8)(24,8)
\psbezier[linewidth=2pt,border=3pt](16,8)(20,8)(20,0)(24,0)
\psline[linewidth=2pt,border=3pt](16,16)(24,16)
\psbezier[linewidth=2pt,border=3pt](24,8)(28,8)(28,16)(32,16)
\psbezier[linewidth=2pt,border=3pt](24,16)(28,16)(28,8)(32,8)
\psline[linewidth=2pt,border=3pt](24,0)(32,0)
\put(-3,-1){$1$}
\put(-3,7){$2$}
\put(-3,15){$1$}
\put(11,-1){$1$}
\put(11,7){$1$}
\put(11,15){$1$}
\put(33,-1){$2$}
\put(33,7){$2$}
\put(33,15){$1$}
\put(23,9){$1$}
\psline[linewidth=2pt]{->}(23.5,19)(24,19)
\psline[linewidth=2pt]{->}(4.5,19)(4,19)
\psline(0,19)(10,19)
\psline(14,19)(32,19)
\end{picture}
\begin{picture}(65,22)(0,1)
\psbezier[linewidth=2pt,border=3pt](0,16)(4,16)(4,8)(8,8)
\psbezier[linewidth=2pt,border=3pt](0,8)(4,8)(4,16)(8,16)
\psline[linewidth=2pt,border=3pt](0,0)(8,0)
\psbezier[linewidth=2pt,border=3pt](8,8)(12,8)(12,0)(16,0)
\psbezier[linewidth=2pt,border=3pt](8,0)(12,0)(12,8)(16,8)
\psline[linewidth=2pt,border=3pt](8,16)(16,16)
\psbezier[linewidth=2pt,border=3pt](16,16)(20,16)(20,8)(24,8)
\psbezier[linewidth=2pt,border=3pt](16,8)(20,8)(20,16)(24,16)
\psline[linewidth=2pt,border=3pt](16,0)(24,0)
\psline[linewidth=2pt,border=3pt](24,0)(26,0)
\psline[linewidth=2pt,border=3pt](24,8)(26,8)
\psline[linewidth=2pt,border=3pt](24,16)(26,16)
\psline[linewidth=2pt,border=3pt](30,0)(32,0)
\psline[linewidth=2pt,border=3pt](30,8)(32,8)
\psline[linewidth=2pt,border=3pt](30,16)(32,16)
\psbezier[linewidth=2pt,border=3pt](32,8)(36,8)(36,16)(40,16)
\psbezier[linewidth=2pt,border=3pt](32,16)(36,16)(36,8)(40,8)
\psline[linewidth=2pt,border=3pt](32,0)(40,0)
\psbezier[linewidth=2pt,border=3pt](40,0)(44,0)(44,8)(48,8)
\psbezier[linewidth=2pt,border=3pt](40,8)(44,8)(44,0)(48,0)
\psline[linewidth=2pt,border=3pt](40,16)(48,16)
\psbezier[linewidth=2pt,border=3pt](48,0)(52,0)(52,8)(56,8)
\psbezier[linewidth=2pt,border=3pt](48,8)(52,8)(52,0)(56,0)
\psline[linewidth=2pt,border=3pt](48,16)(56,16)
\psbezier[linewidth=2pt,border=3pt](56,8)(60,8)(60,16)(64,16)
\psbezier[linewidth=2pt,border=3pt](56,16)(60,16)(60,8)(64,8)
\psline[linewidth=2pt,border=3pt](56,0)(64,0)
\put(-3,-1){$4$}
\put(-3,7){$2$}
\put(-3,15){$1$}
\put(27,-1){$1$}
\put(27,7){$1$}
\put(27,15){$1$}
\put(65,-1){$1$}
\put(65,7){$1$}
\put(65,15){$4$}
\put(7,9){$1$}
\put(15,9){$2$}
\put(39,9){$2$}
\put(47,1){$4$}
\put(47,9){$1$}
\put(55,9){$4$}
\psline[linewidth=2pt]{->}(48.5,19)(49,19)
\psline[linewidth=2pt]{->}(11.5,19)(11,19)
\psline(30,19)(64,19)
\psline(0,19)(26,19)
\end{picture}
\caption{\sf\smaller Two tentative colourings of the $3$-strand braid $\sig1\sig2\siginv1$ using the Laver table~$\AA_2$: on the left, we use the decomposition of the braid as an irreducible fraction, namely $\siginv2 \sig1\sig2$, on the right, we use its decomposition with a denominator that is a power of Garside's fundamental braid, namely $\Delta_3\inv \sig2\sigg12\sig2$; in both cases, one propagates the colours from the middle.}
\label{F:ColLaver}
\end{figure}

The failure of left-cancellativity for each of the LD-systems~$\AA_\nn$ implies that we may have $\vec\aa \act \br_1 = \vec\bb \act \br_1$ with $\vec\aa \not= \vec\bb$ and, from there, with $\vec\aa \act \br_2 \not= \vec\bb \act \br_2$. However, an important positive point is that, by Laver's Theorem~\ref{T:Tables} and at least if Axiom~$\AxI3$ is true, (a subsystem of) the inverse limit~$\AA_\infty$ of the LD-systems~$\AA_\nn$ is a free LD-system. So, $\AA_\nn$-colourings can be viewed as finite approximations of free LD-system-colourings. Hence the left-cancellativity of free LD-systems might imply a good asymptotic behaviour for $\AA_\nn$-colourings. This is probably worth exploring.

Another (related) direction of research would be to use the approach of R.L.\,Rub\-insztein in~\cite{Rub}, based on the introduction of a notion of topological quandle. Laver tables are finite, discrete structures, and they are \textit{a priori} not relevant for such a topological approach. However, the limit~$\AA_\infty$ of the inverse system $(\AA_\nn, \proj_\nn)$ consists of all $2$-adic integers, and this limit is therefore equipped with a natural valuation, hence with an ultrametric topology. In particular, at least if Axiom~$\AxI3$ is satisfied, the substructure of~$\AA_\infty$ generated by~$(1, 1, ...)$ is a free LD-system: the latter is not a rack, but it is close to be one in that all left-translations are one-to-one. Thus investigating the counterpart of the space of colourings~$J_Q(L)$, a link invariant defined in~\cite{Rub}, seems to be a natural and promising approach.

\subsection{The (co)-homology approach}
\label{SS:Homology}

Owing to the difficulty of computing the fundamental quandle of a link, it is natural to try to obtain partial information by developing a convenient homology theory, viewed as a way to define sort of linear approximations. Initiated by R.\,Fenn, C.\,Rourke, B.\,Sanderson from 1990~\cite{FennTackling, FRSJames} and developed by S.\,Carter, M.\,Elhamdadi, M.\,Saito, and their collaborators in~\cite{CJKS, CES, CES2, CKS}, this approach proved to be extremely fruitful, as explained in~\cite{FennTackling, CarSurv}. 

\subsubsection*{The general principle}

A comprehensive survey can be found in~\cite{CarSurv}, and we shall just present here the very first steps. As we consider left-selfdistributivity here, it is coherent to use a symmetric version of the construction as developed in~\cite{CJKS} or (for the case of a general LD-system)~\cite{PrzHomo}. The starting observation is that several ways of associating chain complexes to an LD-system exist---and even more exist when one starts with a \emph{multi-LD-system}, that is, a set equipped with several mutually distributive operations~\cite{PrzPut}. 

\begin{lemm}
\label{L:Bicomplex}
Assume that $(\SS, \op)$ is an LD-system. For~$\kk \ge 1$, let $\CC_\kk(\SS)$ be a free $\ZZZZ$-module based on~$\SS^{\kk}$, and put $\CC_0(\SS)=\ZZZZ$. For $1 \le i \le \kk$, define $\ZZZZ$-linear maps $\dd^{\,\op}_{\kk, \ii}, \dd^{\, 0}_{\kk, \ii} : \CC_\kk(\SS) \rightarrow \CC_{\kk-1}(\SS)$ by
\begin{gather*}
\dd^{\,\op}_{\kk, \ii}(x_1 \wdots  x_\kk) = (x_1 \wdots  x_{i-1}, \widehat{\xx_\ii}, x_i \op x_{i+1} \wdots  x_i \op x_{\kk}), \\
\dd^{\,0}_{\kk, \ii}(x_1 \wdots  x_\kk) = (x_1 \wdots  x_{i-1}, \widehat{x_i}, x_{i+1} \wdots x_{\kk}).
\end{gather*}
Put $\der^{\,\op}_{\kk}:= \sum_{i=1}^\kk (-1)^{i-1} \dd^{\,\op}_{\kk, \ii},$ and $\der^{\,0}_{\kk}:= \sum_{i=1}^\kk (-1)^{i-1} \dd^{\, 0}_{\kk, \ii}$. Then, for every $\ZZZZ$-linear combination $\der_{\kk}$ of~$\der^{\,\op}_{\kk}$ and $\der^{\,0}_{\kk}$, we have  $\der_{\kk-1} \circ \der_{\kk} = 0$ for every~$\kk$.
\end{lemm}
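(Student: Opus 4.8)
The plan is to fix scalars $\alpha,\beta\in\ZZZZ$, set $\der_\kk = \alpha\,\der^{\,\op}_\kk + \beta\,\der^{\,0}_\kk$, and expand the composite bilinearly:
\[
\der_{\kk-1}\circ\der_\kk
= \alpha^2\,\der^{\,\op}_{\kk-1}\der^{\,\op}_\kk
+ \alpha\beta\,\bigl(\der^{\,\op}_{\kk-1}\der^{\,0}_\kk + \der^{\,0}_{\kk-1}\der^{\,\op}_\kk\bigr)
+ \beta^2\,\der^{\,0}_{\kk-1}\der^{\,0}_\kk.
\]
Since $\alpha,\beta$ are arbitrary, it suffices to prove the three identities $\der^{\,\op}_{\kk-1}\der^{\,\op}_\kk = 0$, $\der^{\,0}_{\kk-1}\der^{\,0}_\kk = 0$, and $\der^{\,\op}_{\kk-1}\der^{\,0}_\kk + \der^{\,0}_{\kk-1}\der^{\,\op}_\kk = 0$; conversely these three clearly make the displayed expression vanish.

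The first step is to establish the presimplicial commutation relations for the individual face maps. For $\ii<\jj$ I claim
\[
\dd^{\,\op}_{\kk-1,\ii}\,\dd^{\,\op}_{\kk,\jj} = \dd^{\,\op}_{\kk-1,\jj-1}\,\dd^{\,\op}_{\kk,\ii},
\qquad
\dd^{\,0}_{\kk-1,\ii}\,\dd^{\,0}_{\kk,\jj} = \dd^{\,0}_{\kk-1,\jj-1}\,\dd^{\,0}_{\kk,\ii},
\]
together with the two mixed relations
\[
\dd^{\,0}_{\kk-1,\ii}\,\dd^{\,\op}_{\kk,\jj} = \dd^{\,\op}_{\kk-1,\jj-1}\,\dd^{\,0}_{\kk,\ii},
\qquad
\dd^{\,\op}_{\kk-1,\ii}\,\dd^{\,0}_{\kk,\jj} = \dd^{\,0}_{\kk-1,\jj-1}\,\dd^{\,\op}_{\kk,\ii}.
\]
Each is checked by evaluating both sides on a generator $(x_1 \wdots x_\kk)$: deleting the entries in positions~$\ii$ and~$\jj$ in the two possible orders leaves the same surviving list, the index shift $\jj\mapsto\jj-1$ accounting for the entry removed first. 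The three relations not involving two $\op$-faces are pure index bookkeeping. The one genuinely algebraic identity is the $\op$--$\op$ relation: after deleting the $\ii$th entry and acting by $x_\ii\op-$ on everything to its right, the entry that subsequently plays the role of the deleted $\jj$th letter has become $x_\ii\op x_\jj$, and comparing the two deletion orders on a later entry $x_\ell$ (with $\ell>\jj$) forces $x_\ii\op(x_\jj\op x_\ell)=(x_\ii\op x_\jj)\op(x_\ii\op x_\ell)$, which is precisely the LD-law. This is the crux of the lemma and the only place where the hypothesis on $(\SS,\op)$ enters.

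From the two ``pure'' relations the first two identities follow by the standard argument: for any family of face operators $e_\ii$ with $e_\ii e_\jj = e_{\jj-1}e_\ii$ for $\ii<\jj$, the alternating sum squares to zero, since in $\sum_{\ii,\jj}(-1)^{\ii+\jj}e_\ii e_\jj$ the terms with $\ii<\jj$ are rewritten via the relation and cancel the terms with $\ii\ge\jj$ under the reindexing $(\ii,\jj)\mapsto(\jj-1,\ii)$, the sign flipping because $(-1)^{(\jj-1)+\ii}=-(-1)^{\ii+\jj}$. Applying this to the $\op$-family and to the $0$-family yields $\der^{\,\op}_{\kk-1}\der^{\,\op}_\kk=0$ and $\der^{\,0}_{\kk-1}\der^{\,0}_\kk=0$.

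For the cross term I expand $\der^{\,\op}_{\kk-1}\der^{\,0}_\kk + \der^{\,0}_{\kk-1}\der^{\,\op}_\kk$ as $\sum_{\ii,\jj}(-1)^{\ii+\jj}\bigl(\dd^{\,\op}_{\kk-1,\ii}\dd^{\,0}_{\kk,\jj} + \dd^{\,0}_{\kk-1,\ii}\dd^{\,\op}_{\kk,\jj}\bigr)$ and pair the summands using the two mixed relations. The relation $\dd^{\,\op}_{\kk-1,\ii}\dd^{\,0}_{\kk,\jj}=\dd^{\,0}_{\kk-1,\jj-1}\dd^{\,\op}_{\kk,\ii}$ carries the $\ii<\jj$ part of the first family, term by term, onto the negative of the $\ii\ge\jj$ part of the second family under $(\ii,\jj)\mapsto(\jj-1,\ii)$, while $\dd^{\,0}_{\kk-1,\ii}\dd^{\,\op}_{\kk,\jj}=\dd^{\,\op}_{\kk-1,\jj-1}\dd^{\,0}_{\kk,\ii}$ does the same with the roles exchanged; the two sign-reversing bijections together cover every summand exactly once, so the whole expression collapses to $0$, giving the third identity and hence the lemma. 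I expect no conceptual obstacle here: beyond the $\op$--$\op$ face relation, which is a direct transcription of left-selfdistributivity, the only real work is keeping the index shifts and signs straight in this last pairing, and it is worth writing out the bijections explicitly to be sure they are exhaustive and injective (in particular that the diagonal $\ii=\jj$ terms are matched correctly).
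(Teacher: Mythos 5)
Your proof is correct and follows essentially the same route as the paper's: the same four face-map commutation relations $\dd^{\,\diamond}_{\kk-1,\ii}\circ\dd^{\,\star}_{\kk,\jj}=\dd^{\,\star}_{\kk-1,\jj-1}\circ\dd^{\,\diamond}_{\kk,\ii}$ for $\ii<\jj$ and $\diamond,\star\in\{\op,0\}$, with the LD-law entering exactly and only in the $\op$--$\op$ case, followed by the sign-reversing reindexing $(\ii,\jj)\mapsto(\jj-1,\ii)$ that yields the three identities~\eqref{E:Bicomplex} and then bilinearity for an arbitrary combination. You simply write out in full the cancellation bookkeeping that the paper compresses into ``the result easily follows.''
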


\begin{proof}[Proof (sketch)]
A direct computation shows that, for all $1 \le j < i \le \kk$ and for every choice of $\diamond$ and $\star$ in~$\{\op, 0\}$, the relation $\dd^{\,\diamond}_{\kk-1,j} \circ \dd^{\,\star}_{\kk,i}= \dd^{\,\star}_{\kk-1,i-1} \circ \dd^{\,\diamond}_{\kk,j}$ is satisfied. From there, one deduces 
\begin{equation}
\label{E:Bicomplex}
\der^{\,\op}_{\kk-1} \circ \der^{\,\op}_{\kk} = \der^{\,0}_{\kk-1} \circ \der^{\,0}_{\kk} = \der^{\,\op}_{\kk-1} \circ \der^{\,0}_{\kk}  + \der^{\,0}_{\kk-1} \circ \der^{\,\op}_{\kk}  = 0,
\end{equation}
and the result easily follows. The point in this computation is that, when say $\dd^*_{\kk-1} \dd^*_\kk(\xx_1 \wdots \xx_{\kk+1})$ is expanded into a sum of $(\kk-1)\kk$ terms, then, for all $\ii < \jj$, there appear exactly two terms in which $\xx_\ii$ and~$\xx_\jj$ do not appear on the right:
\begin{multline*}
\qquad(-1)^{\ii + \jj + 2} (\xx_1 \wdots \xx_{\ii-1}, \widehat{\xx_\ii}, \xx_\ii \op \xx_{\ii+1} \wdots \xx_\ii \op \xx_{\jj-1}, \widehat{\xx_\jj}, \\ 
\xx_\ii \op (\xx_\jj\op\xx_{\jj+1}) \wdots \xx_\ii \op (\xx_\jj\op\xx_{\kk+1})),\qquad
\end{multline*}
which corresponds to omitting the $\jj$th entry first and then the $\ii$th one, and
\begin{multline*}
\qquad(-1)^{\ii + \jj + 1} (\xx_1 \wdots \xx_{\ii-1}, \widehat{\xx_\ii}, \xx_\ii \op \xx_{\ii+1} \wdots \xx_\ii \op \xx_{\jj-1}, \widehat{\xx_\jj}, \\
(\xx_\ii \op \xx_\jj) \op (\xx_\ii \op\xx_{\jj+1}) \wdots \xx_\ii \op (\xx_\ii \op\xx_{\jj+1}) \wdots \xx_\ii \op \xx_{\kk+1})),\qquad
\end{multline*}
which corresponds to omitting the $\ii$th entry first and then the $\jj-1$st one. The left-selfdistributivity law is then exactly the condition needed to ensure that the above two tuples coincide, so their cumulated contribution vanishes. 
\end{proof}

So Lemma~\ref{L:Bicomplex} says that, for every linear combination~$\der_\kk$ of~$\der_\kk^{\op}$ and~$\der_\kk^0$, the sequence $(\CC_\kk(\SS), \der_\kk)_\kk$ is a chain complex---actually \eqref{E:Bicomplex} says that $(\CC_\kk(\SS), \der_\kk^{\op}, \der_\kk^0)_\kk$ is what is called a chain bicomplex---leading to a derived notion of homology and, dually, of cohomology. It is standard to consider two particular linear combinations, namely $\der_\kk^*$ itself, and $\der_\kk^* - \der_\kk^0$.

\begin{defi}
\label{D:DistrHom}
Assume that $(\SS, \op)$ is an LD-system.

\ITEM1 For~$\kk \ge 1$, put $\der^\Rack_{\kk} = \der^{\,\op}_{\kk}-\der^{\,0}_{\kk}$. Then the chain complex $(\CC_\kk(\SS),\der^\Rack_{\kk})_\kk$ is called the \emph{rack complex} of~$(\SS, \op)$, and its homology is called the \emph{rack homology} of~$(\SS, \op)$, denoted by~$(\HH^\Rack_\kk(\SS))_\kk$.

\ITEM2 For every abelian group~$\GG$, define $\CC^\kk(\SS; \GG)$ to be $\Hom_{\ZZZZ}(\CC_\kk(\SS); \GG)$ and let $\der_\Rack^{\kk}$ be the differential on~$\CC^\kk(\SS; \GG)$ induced by $\der^\Rack_{\kk}$. The cohomology of the cochain complex $(\CC^\kk(\SS; \GG), \der_\Rack^{\kk})_\kk$ is called the $\GG$-valued \emph{rack cohomology} of~$(\SS, \op)$, denoted by~$(\HH_\Rack^\kk(\SS; \GG))_\kk$. The image of~$\der_\Rack^{\kk-1}$ (\resp the kernel of~$\der_\Rack^\kk$) is denoted by~$\BB_\Rack^\kk(\SS; \GG)$ (\resp $\ZZ_\Rack^\kk(\SS; \GG)$) and its elements are called $\GG$-valued \emph{$\kk$-coboundaries} (\resp\emph{$\kk$-cocycles}).
 
\ITEM3 The \emph{one-term distributive homology} and \emph{cohomology} of~$(\SS, \op)$ are obtained by replacing~$\der^\Rack_{\kk}$ with~$\der^{\,\op}_{\kk}$ everywhere.
\end{defi}

In the distributive world, the one-term distributive complex can be seen as the analogue of the bar complex for associative algebras, whereas the rack complex is an analogue of the Hochschild complex. This was pointed out in \cite{PrzHomo} and explained in the context of a unifying braided homology theory in \cite{Lebed1}. 

It turns out that, in view of topological applications, the rack (co)homology is more suitable than the one-term distributive (co)homology. More specifically, the rack $2$-cocycles directly lead to interesting invariants. It follows from the explicit definitions of Lemma~\ref{L:Bicomplex} that a map $\phi: \SS \times \SS \to \GG$ defines a (rack) $2$-cocycle if and only if it obeys the rule
\begin{equation}
\label{E:Cocycle}
\phi(\xx, \zz) + \phi(\xx\op\yy, \xx\op\zz) = \phi(\yy, \zz) + \phi(\xx, \yy\op\zz).
\end{equation}
Then the general principle that makes $2$-cocycles valuable here is the possibility of using them in the context of diagram colourings so as to obtain invariants.

\begin{lemm}\cite{CarSurv}
\label{L:Cocycle}
\ITEM1 Assume that $(\SS, \op)$ is an LD-system, $\GG$ is an abelian group, and $\phi: \SS \times \SS \to \GG$ is a $\GG$-valued $2$-cocycle for~$(\SS, \op)$. For $\DD$ a positive $\mm$-strand braid diagram and $\vec\aa$ in~$\SS^\mm$, define $\phih_\DD(\vec\aa) = \sum_\ii \phi(\aa_\ii, \bb_\ii)$ where $\aa_\ii, \bb_\ii$ are the input colours at the $\ii$th crossing of~$\DD$ when $\DD$ is coloured from~$\vec\aa$. Then $\phih_\DD$ is invariant under Reidemeister moves of type~III.

\ITEM2 If $(\SS, \op)$ is a rack and a negative crossing contributes $-\phi(\aa, \bb)$ when the output colours are~$\aa, \bb$, then $\phih_\DD$ is defined for every braid diagram and it is invariant under Reidemeister moves of type~II and~III.

\ITEM3 If $(\SS, \op)$ is a quandle and $\phi$ satisfies the rule $\phi(\xx, \xx) = 0$, then $\phih_\DD$ is defined for every link diagram and it is invariant under Reidemeister moves of type~I--III.
\end{lemm}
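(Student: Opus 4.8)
The plan is to verify each of the three invariance claims by tracking how the weight $\phih_\DD(\vec\aa)=\sum_\ii \phi(\aa_\ii,\bb_\ii)$ changes under the relevant Reidemeister move, using the cocycle identity~\eqref{E:Cocycle} as the exact bookkeeping that makes the contributions cancel. The natural order is~\ITEM1, then~\ITEM2, then~\ITEM3, since each successive part reuses the previous one and adds one more move to the list.

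For~\ITEM1, I would start from the type~III picture already drawn in Figure~\ref{F:Translation} (the case III$_{++}$). On the left-hand diagram the three crossings have input colour-pairs $(\bb,\cc)$, $(\aa,\bb{\op}\cc)$, and $(\aa{\op}\bb,\aa{\op}\cc)$ (reading off the propagated labels), so their cumulated contribution is $\phi(\bb,\cc)+\phi(\aa,\bb{\op}\cc)+\phi(\aa{\op}\bb,\aa{\op}\cc)$; on the right-hand diagram the three crossings contribute $\phi(\aa,\cc)+\phi(\aa,\bb)+\phi(\aa{\op}\bb\,,\,\aa{\op}\cc)$ after re-reading the labels, so that $\phih$ is unchanged precisely when
\begin{equation*}
\phi(\bb,\cc)+\phi(\aa,\bb{\op}\cc)=\phi(\aa,\cc)+\phi(\aa{\op}\bb,\aa{\op}\cc),
\end{equation*}
which is exactly~\eqref{E:Cocycle}. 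Since $\DD$ is a positive braid diagram, the only local modification one is allowed to perform is a type~III move, and crossings outside the disk where the move takes place keep the same input colours by Lemma~\ref{L:Action}\ITEM2; hence the total weight is invariant. This establishes~\ITEM1.

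For~\ITEM2 the new ingredient is a negative crossing, weighted $-\phi(\aa,\bb)$ in terms of its \emph{output} colours. Because $(\SS,\op)$ is now a rack, the colouring is globally defined for every braid word by Lemma~\ref{L:Action}\ITEM1, so $\phih_\DD$ makes sense. I would first check type~II directly from the left-hand type~II picture of Figure~\ref{F:Translation}: a positive crossing stacked on a negative one (or vice versa) uses the rack relation~\eqref{E:RII} to recover the original colours $\aa,\bb$, and the positive crossing then contributes $+\phi(\aa,\bb)$ while the negative one contributes $-\phi(\aa,\bb)$ on the same colour-pair, so the two cancel and $\phih$ is unchanged. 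Type~III invariance extends the computation of~\ITEM1 to the remaining three orientations of the crossing; each reduces to one of the four distributive laws of~\eqref{E:RIII}, all of which collapse to the single identity~\eqref{E:Cocycle} by the second Lemma (the one giving~\eqref{E:Inverse} and~\eqref{E:LDBis}), so no genuinely new verification is required.

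For~\ITEM3 one adds idempotence, $\xx\op\xx=\xx$, together with the hypothesis $\phi(\xx,\xx)=0$, and must check type~I. The type~I picture of Figure~\ref{F:Translation} has a single self-crossing whose two input colours are equal (both $\aa$), contributing $\phi(\aa,\aa)=0$; removing the kink therefore does not change $\phih$, and the quandle relation~\eqref{E:RI} guarantees the output colour is again $\aa$ so that the rest of the diagram is unaffected. Combining this with~\ITEM2 gives invariance under all three moves, hence a genuine link invariant. \textbf{The main obstacle} I anticipate is purely notational rather than conceptual: one must be scrupulous in reading the \emph{correct} input colour-pair at every crossing of each diagram (input versus output, and which strand is the over-strand after the move), since an error there silently breaks the cancellation; the actual algebra is nothing more than a single application of~\eqref{E:Cocycle} per move. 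I would therefore organise the proof around the explicit label diagrams of Figure~\ref{F:Translation}, letting the figures carry the bookkeeping and invoking~\eqref{E:Cocycle}, \eqref{E:RII}, \eqref{E:RI}, and $\phi(\xx,\xx)=0$ at the one decisive step in each case.
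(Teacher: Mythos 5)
Your parts \ITEM2 and \ITEM3 are correct and coincide with the paper's own (very brief) argument: a positive crossing followed by its inverse contributes $\phi(\aa,\bb)-\phi(\aa,\bb)=0$, and a kink contributes $\pm\phi(\aa,\aa)=0$ under the quandle hypothesis. Part \ITEM1 also follows the paper's route---the paper's whole proof of \ITEM1 is the labelled picture reproduced as Figure~\ref{F:Cocycle}---but your execution contains exactly the bookkeeping error you warn about in your last paragraph, and it breaks the derivation. In the case III$_{++}$ of Figure~\ref{F:Translation}, the left-hand diagram (the $\sig2\sig1\sig2$ side) has crossings with input pairs $(\bb,\cc)$, $(\aa,\bb{\op}\cc)$ and $(\aa,\bb)$: its third crossing is between the strand coloured~$\aa$ and the strand whose colour has reverted to~$\bb$, and the pair $(\aa{\op}\bb,\aa{\op}\cc)$ that you list there instead occurs only in the other diagram. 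The correct totals are $\phi(\bb,\cc)+\phi(\aa,\bb{\op}\cc)+\phi(\aa,\bb)$ for one side and $\phi(\aa,\bb)+\phi(\aa,\cc)+\phi(\aa{\op}\bb,\aa{\op}\cc)$ for the other; the term $\phi(\aa,\bb)$, present on both sides, cancels, and the remaining four terms are exactly the four terms of~\eqref{E:Cocycle}.

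With the lists as you wrote them, the term common to the two sides is instead $\phi(\aa{\op}\bb,\aa{\op}\cc)$, so equating your two sums yields $\phi(\bb,\cc)+\phi(\aa,\bb{\op}\cc)=\phi(\aa,\cc)+\phi(\aa,\bb)$, which is \emph{not} the cocycle law and is not equivalent to it. The identity you then display is the genuine cocycle law, but it does not follow from the sums you computed: the sentence ``so that $\phih$ is unchanged precisely when \dots'' asserts an equivalence that your own labels contradict. The repair is entirely local---re-read the propagated colours at the third crossing of the left diagram, as in Figure~\ref{F:Cocycle}---but as written the decisive step of \ITEM1, which is the only substantive computation in the lemma, fails. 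A useful self-check for this kind of computation: the two sides of the type~III move have six crossings in total while \eqref{E:Cocycle} has only four terms, so exactly one contribution must occur on both sides and cancel, and one can verify from the output colours (the middle output $\aa{\op}\bb$ must be created by a crossing with inputs $(\aa,\bb)$ on \emph{each} side) that this common contribution is $\phi(\aa,\bb)$.
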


\begin{proof}
The argument for~\ITEM1 is given in Figure~\ref{F:Cocycle}. For~\ITEM2, concatenating two opposite crossings leads to a contribution of the form $\phi(\aa, \bb) - \phi(\aa, \bb)$. Finally, for~\ITEM3, adding a loop results in an additional contribution of the form $\pm\phi(\aa, \aa)$, hence $0$ under the additional assumption.
\end{proof}
 
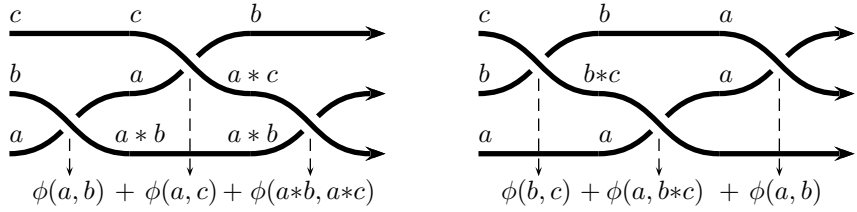
\begin{figure}[htb]
\begin{picture}(50,25)(0,-6)
\psbezier[linewidth=2pt,border=3pt](0,0)(8,0)(8,8)(16,8)
\psbezier[linewidth=2pt,border=3pt](0,8)(8,8)(8,0)(16,0)
\psline[linewidth=2pt,border=3pt](0,16)(16,16)
\psbezier[linewidth=2pt,border=3pt](16,8)(24,8)(24,16)(32,16)
\psbezier[linewidth=2pt,border=3pt](16,16)(24,16)(24,8)(32,8)
\psline[linewidth=2pt,border=3pt](16,0)(32,0)
\psbezier[linewidth=2pt,border=3pt](32,0)(40,0)(40,8)(48,8)
\psbezier[linewidth=2pt,border=3pt](32,8)(40,8)(40,0)(48,0)
\psline[linewidth=2pt,border=3pt](32,16)(48,16)
\psline[linewidth=2pt,border=3pt]{->}(48,0)(50,0)
\psline[linewidth=2pt,border=3pt]{->}(48,8)(50,8)
\psline[linewidth=2pt,border=3pt]{->}(48,16)(50,16)
\put(0,1.5){$\aa$}
\put(0,9.5){$\bb$}
\put(0,17.5){$\cc$}
\put(16,17.5){$\cc$}
\put(14,1.5){$\aa\op\bb$}
\put(16,9.5){$\aa$}
\put(29,9.5){$\aa\op\cc$}
\put(29,1.5){$\aa\op\bb$}
\put(32,17.5){$\bb$}
\psline[linewidth=0.5pt,linestyle=dashed]{->}(8,2)(8,-3)
\put(3,-6){$\phi(\aa, \bb)$}
\put(14,-6){$+$}
\psline[linewidth=0.5pt,linestyle=dashed]{->}(24,10)(24,-3)
\put(18,-6){$\phi(\aa, \cc)\,+$}
\psline[linewidth=0.5pt,linestyle=dashed]{->}(40,2)(40,-3)
\put(32,-6){$\phi(\aa{\op}\bb, \aa{\op}\cc)$}
\end{picture}
\hspace{10mm}
\begin{picture}(50,25)(0,-6)
\psbezier[linewidth=2pt,border=3pt](0,8)(8,8)(8,16)(16,16)
\psbezier[linewidth=2pt,border=3pt](0,16)(8,16)(8,8)(16,8)
\psline[linewidth=2pt,border=3pt](0,0)(16,0)
\psbezier[linewidth=2pt,border=3pt](16,0)(24,0)(24,8)(32,8)
\psbezier[linewidth=2pt,border=3pt](16,8)(24,8)(24,0)(32,0)
\psline[linewidth=2pt,border=3pt](16,16)(32,16)
\psbezier[linewidth=2pt,border=3pt](32,8)(40,8)(40,16)(48,16)
\psbezier[linewidth=2pt,border=3pt](32,16)(40,16)(40,8)(48,8)
\psline[linewidth=2pt,border=3pt](32,0)(48,0)
\psline[linewidth=2pt,border=3pt]{->}(48,0)(50,0)
\psline[linewidth=2pt,border=3pt]{->}(48,8)(50,8)
\psline[linewidth=2pt,border=3pt]{->}(48,16)(50,16)
\put(0,1.5){$\aa$}
\put(0,9.5){$\bb$}
\put(0,17.5){$\cc$}
\put(16,17.5){$\bb$}
\put(16,1.5){$\aa$}
\put(14,9.5){$\bb{\op}\cc$}
\put(32,9.5){$\aa$}
\put(32,17.5){$\aa$}
\psline[linewidth=0.5pt,linestyle=dashed]{->}(8,10)(8,-3)
\put(3,-6){$\phi(\bb, \cc)$}
\put(13.5,-6){$+$}
\psline[linewidth=0.5pt,linestyle=dashed]{->}(24,2)(24,-3)
\put(17,-6){$\phi(\aa, \bb{\op}\cc)$}
\put(32,-6){$+$}
\psline[linewidth=0.5pt,linestyle=dashed]{->}(40,10)(40,-3)
\put(36,-6){$\phi(\aa, \bb)$}
\end{picture}
\caption{\sf\smaller Using a $2$-cocycle to construct a braid invariant: one associates with every braid diagram the sum of the values of the cocycle at the successive crossings labelled by means of the reference LD-system; the cocycle rule of~\eqref{E:Cocycle} is exactly what is needed to guarantee invariance with respect to Reidemeister moves of type~$\mathrm{III}$.}
\label{F:Cocycle}
\end{figure}

Rack $3$-cocycles also proved to lead to interesting topological applications, but here we shall only refer to the survey~\cite{CarSurv} where a complete discussion can be found. 

\subsubsection*{The case of the Laver tables}

The Laver tables are directly eligible for the above constructions, and there is no problem for defining the associated homology and cohomology groups. The cases of one-term and rack homologies are rather different, the latter turning out to be much richer than the former. 

So, let us first briefly consider the one-term homology of Laver tables. As is the case of many monogenerated LD-systems (that is, LD-systems generated by a single element), the groups~$\HH^{\op}_\kk(\AA_\nn)$ are trivial:

\begin{prop}
\label{P:LaverHomol}
For every~$\nn$, the chain complex $(\CC_\kk(\AA_\nn), \der^{\op}_\kk)_\kk$ is acyclic, and the resulting homology groups~$\HH^*_\kk(\AA_\nn)$ are trivial.
\end{prop}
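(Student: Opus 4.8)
The plan is to exhibit an explicit contracting homotopy, taking advantage of the fact that each Laver table has a left identity. Indeed, Theorem~\ref{T:Tables}\ITEM2 gives $\per_\nn(2^\nn) = 2^\nn$, which forces the last row $2^\nn\op1 \wdots 2^\nn\op 2^\nn$ to be a strictly increasing sequence of $2^\nn$ values ending at $2^\nn$, hence to be $1, 2 \wdots 2^\nn$; equivalently $2^\nn \op \qq = \qq$ for all~$\qq$ (this also follows by an easy induction on~$\qq$ from $2^\nn \op 1 = 1$ and the LD-law). Write $\ee = 2^\nn$ for this left identity. Since one-term distributive homology is the selfdistributive analogue of the bar complex, it is natural to expect that, just as the bar complex of a unital algebra contracts, the presence of~$\ee$ should make the whole complex contractible.

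First I would define $\ZZZZ$-linear maps $h_\kk : \CC_\kk(\AA_\nn) \to \CC_{\kk+1}(\AA_\nn)$ by prepending~$\ee$, namely $h_\kk(\xx_1 \wdots \xx_\kk) = (\ee, \xx_1 \wdots \xx_\kk)$ for $\kk \ge 1$, together with $h_0(1) = (\ee)$ on $\CC_0(\AA_\nn) = \ZZZZ$ and $h_{-1} = 0$. The claim to be checked is that $(h_\kk)_\kk$ is a chain contraction, i.e.
\begin{equation*}
\der^{\,\op}_{\kk+1} \circ h_\kk + h_{\kk-1} \circ \der^{\,\op}_{\kk} = \id_{\CC_\kk(\AA_\nn)}
\end{equation*}
for every $\kk \ge 0$. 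Since Lemma~\ref{L:Bicomplex} guarantees $\der^{\,\op}$ is a genuine differential, such an identity immediately forces every homology group, including $\HH^{\op}_0(\AA_\nn)$, to vanish, which is the asserted acyclicity.

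The verification amounts to splitting $\der^{\,\op}_{\kk+1}(\ee, \xx_1 \wdots \xx_\kk)$ according to the face maps $\dd^{\,\op}_{\kk+1, \ii}$. The crucial term is $\ii = 1$: because $\ee$ is a left identity, $\dd^{\,\op}_{\kk+1,1}(\ee, \xx_1 \wdots \xx_\kk) = (\ee\op\xx_1 \wdots \ee\op\xx_\kk) = (\xx_1 \wdots \xx_\kk)$, which supplies exactly the $\id$ on the right-hand side. For $\ii \ge 2$ the leading entry~$\ee$ lies strictly to the left of the omitted position and is therefore left untouched, so a direct check gives $\dd^{\,\op}_{\kk+1, \ii} \circ h_\kk = h_{\kk-1} \circ \dd^{\,\op}_{\kk, \ii-1}$; after the reindexing $\jj = \ii-1$ the alternating signs convert the sum of these terms into $-\,h_{\kk-1} \circ \der^{\,\op}_{\kk}$, cancelling the second summand. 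The boundary degrees $\kk = 0, 1$ obey the same formula once $\CC_0(\AA_\nn)$ is identified with the span of the empty tuple and one notes that $\der^{\,\op}_1$ is the augmentation $\xx_1 \mapsto 1$.

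I do not anticipate a serious obstacle: the single structural input, the existence of the left identity~$\ee = 2^\nn$, is read straight off Theorem~\ref{T:Tables}, and thereafter the argument is formally identical to the contraction of an augmented bar complex. The only thing that genuinely requires care is the sign and index bookkeeping in the reindexing step for $\ii \ge 2$. It is worth emphasizing that left-selfdistributivity is used only twice and only indirectly here—once to produce the left identity, and once (through Lemma~\ref{L:Bicomplex}) to know $\der^{\,\op}$ squares to zero—which is precisely the conceptual reason one-term distributive homology degenerates for monogenerated LD-systems carrying such an identity, in contrast with the genuinely richer rack homology discussed below.
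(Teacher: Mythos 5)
Your proof is correct and is essentially the paper's own argument: the paper also contracts the complex by prepending $2^\nn$ (up to an inessential sign convention) and invoking the left-identity property $2^\nn \op \xx = \xx$, which it justifies exactly as you do. The only addition in the paper is a second, alternative homotopy obtained by appending $2^\nn$ on the right, using $\xx \op 2^\nn = 2^\nn$ instead.
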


\begin{proof}
We follow the method of~\cite[Proposition~6.5]{PrzHomo} and give two different arguments. First define $\theta_\kk : \CC_\kk(\AA_\nn) \to\nobreak \CC_{\kk+1}(\AA_\nn)$ for $\kk \ge -1$ by $\theta_{-1}(1) = -(2^\nn)$ and $\theta_\kk(\xx_1 \wdots \xx_{\kk+1}) = -(2^\nn, \xx_1 \wdots \xx_\kk)$. Using the fact that $2^\nn \op \xx = \xx$ holds for every~$\xx$ in~$\AA_\nn$, we obtain 
\begin{align*}
\VR(0,4)\theta_\kk\der^*_\kk(\xx_1 \wdots \xx_{\kk+1})
&= \smash{\sum_{\ii=1}^{\kk+1}} (-1)^\ii (2^\nn, \xx_1 \wdots \xx_{\xx_1}, \widehat{\xx_\ii}, \xx_\ii{\op}\xx_{\ii+1} \wdots \xx_\ii{\op}\xx_{\kk+1}),\\
\theta_{\kk+1}\der^*_{\kk+1}(\xx_1 \wdots \xx_{\kk+1})
&= (\xx_1 \wdots \xx_\kk)\\
\VR(5,3)&\hspace{0mm}+ \smash{\sum_{\ii=1}^{\kk+1}} (-1)^{\ii+1}(2^\nn, \xx_1 \wdots \xx_{\xx_1}, \widehat{\xx_\ii}, \xx_\ii{\op}\xx_{\ii+1} \wdots \xx_\ii{\op}\xx_{\kk+1}),
\end{align*}
whence $\theta_\kk\der^*_\kk + \der^*_{\kk+1}\theta_{\kk+1} = \id$. Hence $\theta_\kk$ is a contracting homotopy for~$(\CC_\kk(\AA_\nn), \der^*_\kk)$, and the homology of the complex must be trivial.

Putting $\theta'_{-1}(1) = (2^\nn)$ and $\theta'_\kk(\xx_1 \wdots \xx_{\kk+1}) = (-1)^{\kk+1}(\xx_1 \wdots \xx_\kk, 2^\nn)$, one checks that $\theta'_*$ is an alternative contracting homotopy for~$(\CC_\kk(\AA_\nn), \der^*_\kk)$ now using the fact that $\xx \op 2^\nn = 2^\nn$ holds for every~$\xx$ in~$\AA_\nn$.
\end{proof}

Rack (co)homology of Laver tables is much more interesting. Due to our specific interest in topological applications, and owing to Lemma~\ref{L:Cocycle}, we only consider rack $2$-cocycles. Without loss of generality, we also restrict to $\ZZZZ$-valued cocycles. Thus, we are interested in maps $\phi: \{1 \wdots 2^\nn\} \times \{1 \wdots 2^\nn\} \to \ZZZZ$ that obey~\eqref{E:Cocycle}. It turns out that such $2$-cocycles can be described very precisely in terms of the values that appear in the columns of the tables~$\AA_\nn$. Here we shall mention the main result only, and refer to~\cite{Dik} for more details and proofs. 

\begin{prop}\cite{Dik}
\label{P:Basis}
For every~$\nn$, the $\ZZZZ$-valued $2$-cocycles for~$\AA_\nn$ make a free $\ZZZZ$-module of rank~$2^\nn$, with a basis consisting of coboundaries defined for $1 \le \qq < 2^\nn$~by 
$$\psi_{\qq, \nn}(\xx, \yy) = \begin{cases}
\ 1&\mbox{if $\qq$ appears in the column of $\yy$ in~$\AA_\nn$ but not in that of $\xx \op \yy$},\\
\ 0&\mbox{otherwise},\end{cases}$$
completed with the constant cocycle with value~$1$.
\end{prop}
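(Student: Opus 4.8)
The plan is to compute everything at the level of cochains and reduce the statement to two separate facts about the combinatorics of~$\AA_\nn$. First I would spell out the relevant differential. From Lemma~\ref{L:Bicomplex} one gets $\der^{\,\op}_2(\xx,\yy)=(\xx\op\yy)-(\xx)$ and $\der^{\,0}_2(\xx,\yy)=(\yy)-(\xx)$, so $\der^\Rack_2(\xx,\yy)=(\xx\op\yy)-(\yy)$; dually, a $\ZZZZ$-valued $2$-coboundary is exactly a map $(\xx,\yy)\mapsto \ff(\xx\op\yy)-\ff(\yy)$ with $\ff\colon\AA_\nn\to\ZZZZ$. Writing $g_\qq(\yy)=1$ if $\qq$ lies in the column of~$\yy$ (that is, $\qq=\pp\op\yy$ for some~$\pp$) and $g_\qq(\yy)=0$ otherwise, the coboundary of~$-g_\qq$ is $(\xx,\yy)\mapsto g_\qq(\yy)-g_\qq(\xx\op\yy)$. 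This agrees with~$\psi_{\qq,\nn}$ precisely when $g_\qq(\xx\op\yy)\le g_\qq(\yy)$, so the first ingredient is the nesting lemma: the column of $\xx\op\yy$ is contained in the column of~$\yy$ for all $\xx,\yy$. I would prove this from the explicit column structure of Laver tables (noting that $2^\nn\op\yy=\yy$ gives $\yy$ itself in its own column), establishing that each $\psi_{\qq,\nn}$ is indeed a coboundary, hence a cocycle; the constant map~$1$ is trivially a cocycle by~\eqref{E:Cocycle}.

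Next I would pin down the coboundary module $\BB^2:=\BB_\Rack^2(\AA_\nn;\ZZZZ)$. The map $\ff\mapsto\der^\Rack\ff$ has kernel the functions with $\ff(\xx\op\yy)=\ff(\yy)$ for all $\xx,\yy$; since $\pp\op1=\pp+1\ \mathrm{mod}\ 2^\nn$ makes the column of~$1$ all of~$\AA_\nn$, such an~$\ff$ is constant, so $\BB^2\cong\ZZZZ^{2^\nn}/\ZZZZ\cdot\mathbf 1$ is free of rank $2^\nn-1$. Using $\per_\nn(2^\nn-1)=1$ (Theorem~\ref{T:Tables}\ITEM2), the row of~$2^\nn-1$ is constant equal to~$2^\nn$, so $2^\nn$ lies in every column and $g_{2^\nn}\equiv\mathbf 1$. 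Therefore the $2^\nn-1$ coboundaries $\psi_{\qq,\nn}$ ($1\le\qq<2^\nn$) form a $\ZZZZ$-basis of $\BB^2$ if and only if $\{g_1,\dots,g_{2^\nn-1},\mathbf 1\}$ is a $\ZZZZ$-basis of $\ZZZZ^{2^\nn}$, i.e.\ the column-membership matrix $G=(g_\qq(\yy))_{\qq,\yy}$ is unimodular. Proving $\det G=\pm1$ is the second ingredient; I would do it by induction on~$\nn$, exploiting the surjection $\proj_\nn\colon\AA_\nn\to\AA_{\nn-1}$ and the period-doubling dichotomy of Theorem~\ref{T:Tables}\ITEM4 to write $G_\nn$ in block form over $G_{\nn-1}$ and track the determinant. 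Finally, since every coboundary $\ff(\xx\op\yy)-\ff(\yy)$ vanishes at $\xx=2^\nn$ whereas the constant cocycle does not, one has $\BB^2\cap\ZZZZ\cdot\mathrm{const}=0$, so the proposed $2^\nn$ elements are linearly independent.

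It remains to show they \emph{span} all cocycles, i.e.\ $\ZZ^2=\BB^2\oplus\ZZZZ\cdot\mathrm{const}$; this is the main obstacle. The starting observation is a normalization: putting $\xx=2^\nn$ in~\eqref{E:Cocycle} and using $2^\nn\op\yy=\yy$ gives $\phi(2^\nn,\zz)=\phi(2^\nn,\yy\op\zz)$, and taking $\zz=1$ (whose column is everything) forces $\phi(2^\nn,\cdot)$ to be a constant~$\cc$. Subtracting $\cc\cdot\mathrm{const}$, I may assume $\phi(2^\nn,\cdot)=0$, whence also $\phi(\cdot,2^\nn)=0$ by the symmetric substitution $\zz=2^\nn$ together with $\xx\op2^\nn=2^\nn$. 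Because $\xx\mapsto\xx\op1=\xx+1$ is a cyclic bijection, I can then define $\ff$ by $\ff(1)=0$ and $\ff(\xx+1)=\phi(\xx,1)$ (consistent since $\phi(2^\nn,1)=0$), so that $\phi-\der^\Rack\ff$ is a cocycle vanishing on the whole first column. The crux is thus the uniqueness statement: \emph{a cocycle vanishing for second argument~$1$ is identically zero}. Specializing~\eqref{E:Cocycle} at $\zz=1$ yields the propagation relation $\phi(\xx\op\yy,\xx+1)=\phi(\xx,\yy+1)$, and I expect the hard part to be iterating this relation to reach an arbitrary pair $(\xx,\yy)$ from the first column; a naive iteration does not obviously terminate, so I would instead run the argument by induction on~$\nn$ via $\proj_\nn$ and the row/period structure of Theorem~\ref{T:Tables}, which also dovetails with the inductive proof of unimodularity above. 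Granting this, $\phi$ is a coboundary, so every cocycle is a $\ZZZZ$-combination of the $\psi_{\qq,\nn}$ and the constant; combined with the freeness and independence already established, the $2$-cocycles form a free $\ZZZZ$-module of rank~$2^\nn$ with the stated basis.
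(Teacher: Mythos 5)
Your formal skeleton is sound, and I checked each reduction: $\der^\Rack_2(\xx,\yy)=(\xx\op\yy)-(\yy)$ is the correct differential, so $2$-coboundaries are exactly the maps $(\xx,\yy)\mapsto\ff(\xx\op\yy)-\ff(\yy)$; the kernel of $\ff\mapsto\der^\Rack\ff$ is the constants because the column of~$1$ is all of~$\AA_\nn$, so $\BB_\Rack^2(\AA_\nn;\ZZZZ)$ is free of rank $2^\nn-1$; the equivalence between ``the $\psi_{\qq,\nn}$ form a basis of $\BB_\Rack^2$'' and unimodularity of the column-membership matrix $(g_\qq(\yy))$ is correct (using $g_{2^\nn}=\mathbf{1}$, which follows from $\per_\nn(2^\nn-1)=1$); evaluation at $\xx=2^\nn$ does give $\BB_\Rack^2\cap\ZZZZ\cdot\mathbf{1}=0$; and your normalization steps (constancy of $\phi(2^\nn,\cdot)$ and of $\phi(\cdot,2^\nn)$, then subtraction of a coboundary to kill the first column) are legitimate consequences of~\eqref{E:Cocycle} together with the facts $2^\nn\op\yy=\yy$, $\xx\op2^\nn=2^\nn$ and $\pp\op1=\pp+1$. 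So Proposition~\ref{P:Basis} would indeed follow from the three ingredients you isolate.

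The problem is that none of those three ingredients is proved, and they are exactly where the mathematical content of the proposition lives. (A)~The nesting lemma, that the set of values in the column of $\xx\op\yy$ is contained in that of~$\yy$, is needed even to know that $\psi_{\qq,\nn}$ \emph{is} a coboundary (otherwise $\der^\Rack(-g_\qq)$ takes the value $-1$ somewhere and differs from $\psi_{\qq,\nn}$). This amounts to transitivity of right-divisibility in~$\AA_\nn$, and it is not a formal consequence of (LD): the law only gives $\pp\op(\xx\op\yy)=(\pp\op\xx)\op(\pp\op\yy)$, which lies in the column of $\pp\op\yy$, not of~$\yy$, so your parenthetical hint does not suffice. (B)~Unimodularity of the $2^\nn\times2^\nn$ matrix $(g_\qq(\yy))$ is asserted only as ``induction on~$\nn$ \dots\ track the determinant''; making the block structure over the level $\nn-1$ matrix explicit requires knowing precisely how columns of~$\AA_\nn$ sit over columns of~$\AA_{\nn-1}$, which is again genuine combinatorics of the tables, not bookkeeping. (C)~Most seriously, the spanning statement --- a cocycle vanishing on the first column vanishes identically, equivalently $\ZZ_\Rack^2(\AA_\nn;\ZZZZ)=\BB_\Rack^2(\AA_\nn;\ZZZZ)\oplus\ZZZZ\cdot\mathbf{1}$, i.e.\ $\HH_\Rack^2(\AA_\nn;\ZZZZ)\cong\ZZZZ$ --- is the crux, and you explicitly write ``Granting this''. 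You correctly note that the propagation relation $\phi(\xx\op\yy,\xx+1)=\phi(\xx,\yy+1)$ only determines $\phi(\aa,\bb)$ when $\aa$ lies in the row of $\bb-1$, so the naive iteration does not close up; the promised induction via $\proj_\nn$ is never carried out, and without it the whole spanning half of the proposition is unsupported. Note that this is consistent with the source: the paper itself gives no proof to compare against, citing~\cite{Dik} and warning that the argument ``is not trivial, and it relies on the combinatorial properties of right-division in Laver tables'' --- which is precisely the material (A)--(C) that your proposal postpones. As it stands, this is a sensible plan with correct plumbing, not a proof.
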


A complete enumeration in the case of the $8$-element table~$\AA_3$ is displayed in Table~\ref{T:Psi}.

\begin{table}[htb]
\newdimen\arrayitem
\def\arrayitem{2.4mm}
\smaller\smaller
\begin{tabular}{c|c}
\VR(0,1.5)\hbox to \arrayitem{\hspace{-2mm}$\psi_{1,3}$}&\1\2\3\4\5\6\7\8\\
\hline
\VR(3,0)$1$&\1\0\0\0\0\0\0\0\\
$2$&\1\0\0\0\0\0\0\0\\
$3$&\1\0\0\0\0\0\0\0\\
$4$&\1\0\0\0\0\0\0\0\\
$5$&\1\0\0\0\0\0\0\0\\
$6$&\1\0\0\0\0\0\0\0\\
$7$&\1\0\0\0\0\0\0\0\\
$8$&\0\0\0\0\0\0\0\0
\end{tabular}
\quad
\begin{tabular}{c|c}
\VR(0,1.5)\hbox to \arrayitem{\hspace{-2mm}$\psi_{2,3}$}&\1\2\3\4\5\6\7\8\\
\hline
\VR(3,0)$1$&\0\1\0\0\0\0\0\0\\
$2$&\1\1\0\0\1\0\0\0\\
$3$&\1\1\0\0\1\0\0\0\\
$4$&\0\1\0\0\0\0\0\0\\
$5$&\1\1\0\0\1\0\0\0\\
$6$&\1\1\0\0\1\0\0\0\\
$7$&\1\1\0\0\1\0\0\0\\
$8$&\0\0\0\0\0\0\0\0
\end{tabular}
\quad
\begin{tabular}{c|c}
\VR(0,1.5)\hbox to \arrayitem{\hspace{-2mm}$\psi_{3,3}$}&\1\2\3\4\5\6\7\8\\
\hline
\VR(3,0)$1$&\1\0\1\0\1\0\0\0\\
$2$&\0\0\1\0\0\0\0\0\\
$3$&\1\0\1\0\1\0\0\0\\
$4$&\0\0\1\0\0\0\0\0\\
$5$&\1\0\1\0\1\0\0\0\\
$6$&\1\0\1\0\1\0\0\0\\
$7$&\1\0\1\0\1\0\0\0\\
$8$&\0\0\0\0\0\0\0\0
\end{tabular}
\quad
\begin{tabular}{c|c}
\VR(0,1.5)\hbox to \arrayitem{\hspace{-2mm}$\psi_{4,3}$}&\1\2\3\4\5\6\7\8\\
\hline
\VR(3,0)$1$&\0\0\0\1\0\0\0\0\\
$2$&\0\0\0\1\0\0\0\0\\
$3$&\0\1\0\1\0\1\0\0\\
$4$&\0\0\0\1\0\0\0\0\\
$5$&\0\1\0\1\0\1\0\0\\
$6$&\0\1\0\1\0\1\0\0\\
$7$&\1\1\1\1\1\1\1\0\\
$8$&\0\0\0\0\0\0\0\0
\end{tabular}

\VR(1,0)

\begin{tabular}{c|c}
\VR(0,1.5)\hbox to \arrayitem{\hspace{-2mm}$\psi_{5,3}$}&\1\2\3\4\5\6\7\8\\
\hline
\VR(3,0)$1$&\1\0\0\0\1\0\0\0\\
$2$&\1\0\0\0\1\0\0\0\\
$3$&\1\0\0\0\1\0\0\0\\
$4$&\0\0\0\0\0\0\0\0\\
$5$&\1\0\0\0\1\0\0\0\\
$6$&\1\0\0\0\1\0\0\0\\
$7$&\1\0\0\0\1\0\0\0\\
$8$&\0\0\0\0\0\0\0\0
\end{tabular}
\quad
\begin{tabular}{c|c}
\VR(0,1.5)\hbox to \arrayitem{\hspace{-2mm}$\psi_{6,3}$}&\1\2\3\4\5\6\7\8\\
\hline
\VR(3,0)$1$&\0\1\0\0\0\1\0\0\\
$2$&\0\1\0\0\0\1\0\0\\
$3$&\1\1\1\0\1\1\1\0\\
$4$&\0\0\0\0\0\0\0\0\\
$5$&\0\1\0\0\0\1\0\0\\
$6$&\0\1\0\0\0\1\0\0\\
$7$&\1\1\1\0\1\1\1\0\\
$8$&\0\0\0\0\0\0\0\0
\end{tabular}
\quad
\begin{tabular}{c|c}
\VR(0,1.5)\hbox to \arrayitem{\hspace{-2mm}$\psi_{7,3}$}&\1\2\3\4\5\6\7\8\\
\hline
\VR(3,0)$1$&\1\0\1\0\1\0\1\0\\
$2$&\0\0\0\0\0\0\0\0\\
$3$&\1\0\1\0\1\0\1\0\\
$4$&\0\0\0\0\0\0\0\0\\
$5$&\1\0\1\0\1\0\1\0\\
$6$&\0\0\0\0\0\0\0\0\\
$7$&\1\0\1\0\1\0\1\0\\
$8$&\0\0\0\0\0\0\0\0
\end{tabular}\\
\VR(1,0)
\caption{\sf\smaller A basis of $\BB_\Rack^2(\AA_3)$ consisting of the seven $\{0, 1\}$-valued $2$-cocycles~$\psi_{\qq, 3}$ with $1 \le \qq \le 7$. To make reading easier, the zeroes are indicated with ``-''. Completing with the constant cocycle with value~$1$, we obtain a basis of~$\ZZ_\Rack^2(\AA_3)$.}\label{T:Psi}
\end{table}

The proof of Proposition~\ref{P:Basis} is not trivial, and it relies on the combinatorial properties of right-division in Laver tables. Two-cocycles capture a lot of information about Laver tables: for instance, one can directly recover from the cocycle~$\psi_{2^{\nn-1}, \nn}$ all periods in~$\AA_\nn$, hence, in a sense, the most critical combinatorial parameters. 

Rack $3$-cocycles can also be analyzed for Laver tables. They involve functions of three variables and the $2$-cocycle condition~\eqref{E:Cocycle} is replaced with the $3$-cocycle condition
\begin{align}
\label{E:Cocycle3}
\phi(\xx \op \yy, \xx \op \zz, \xx \op \tt) + &\phi(\xx, \yy, \zz \op \tt) + \phi(\xx, \zz, \tt)\\ 
\notag
&= \phi(\xx, \yy \op \zz, \yy \op \tt) + \phi(\yy, \zz, \tt) + \phi(\xx, \yy, \tt).
\end{align}
It turns out that $3$-cocycles on~$\AA_\nn$ make a free $\ZZZZ$-module of rank $2^{2\nn} - 2^\nn +1$, and an explicit basis can again be described~\cite{Dik}.

At the moment, the question of using the above results to extract topological information about not necessarily positive braids and possibly links, remains open, as does the question of a topologically interpreting these possible invariants. However, we note that having an explicit basis of  $2$-cocycles made of $\NNNN$-valued functions seems especially promising in view of combinatorial interpretations, typically for counting arguments. We shall not go further here, but, clearly, the conclusion of this section should be that the (co)homological approach is promising in terms of possible topological applications for Laver tables.

\subsection{The approach of the Yang--Baxter equation}
\label{SS:YBE}

Another context in which selfdistributive structures are involved is that of the (Quantum) Yang--Baxter equation (YBE or QYBE) and its connections with quantum groups and $R$-matrices, whence indirectly with topology and knot invariants. 

\subsubsection*{The general principle}

We start from the (non-parametric form of) the (quantum) Yang--Baxter equation, or, rather, of the equivalent braid equation.

\begin{defi}
\label{D:Solution}
If $V$ is a vector space, an element~$R$ of~$\GL(V \otimes V)$ is called a \emph{solution of the Yang--Baxter equation} (YBE), or an \emph{$R$-matrix}, if we have
\begin{equation}
\label{E:YBYB}
(\RR \otimes \id)(\id \otimes \RR)(\RR \otimes \id) = (\id \otimes \RR)(\RR \otimes \id)(\id \otimes \RR).
\end{equation}
\end{defi}

If one writes~$\RR^{\ii\jj}$ for the automorphism of~$V^{\otimes3}$ that corresponds to~$\RR$ acting on the $\ii$th and $\jj$th coordinates, the YBE becomes
\begin{equation}
\label{E:YBYB2}
\RR^{12} \RR^{23} \RR^{12} = \RR^{23} \RR^{12} \RR^{23},
\end{equation}
directly reminiscent of the braid relation~\eqref{E:BraidPres}---with the notation of \eqref{E:YBYB2}, the original Yang--Baxter equation is $\RR^{12} \RR^{13} \RR^{23} = \RR^{23} \RR^{13} \RR^{12}$; it transforms into the ``braid form''~\eqref{E:YBYB2} when $\RR$ is replaced by~$\Pi\RR$, where $\Pi$ is the switch operator that exchanges~$\xx$ and~$\yy$ \cite{Jim}.

For instance, if $A$ is $\CCCC[\qq, \qq\inv]$ and $\VV$ is $A \times A$ with standard basis~$(\ee_1, \ee_2)$, then the automorphism of $\VV \otimes \VV$ defined in the basis $(\ee_1 \otimes \ee_1$, $\ee_1 \otimes \ee_2$, $\ee_2 \otimes \ee_1$, $\ee_2 \otimes \ee_2)$ by the matrix $q^{-1/2}\left(\begin{smallmatrix}1&0&0&0\\0&0&1&0\\0&1&q-q\inv&0\\0&0&0&1\end{smallmatrix}\right)$ satisfies~\eqref{E:YBYB}, that is, it is a solution of YBE. This solution is connected with the basic representation of the quantum group~$U_\qq(\mathfrak{sl}(2))$ and the Jones polynomial~\cite{Kas}. 

Among the (many) solutions of the Yang--Baxter equation, we consider here those solutions~$\RR$ that preserve some fixed basis~$\SS$ of the considered vector space~$\VV$. Then the restriction of~$\RR$ to $\SS \times \SS$ yields a bijection~$\sol$ of~$\SS \times \SS$ to itself that satisfies 
\begin{equation}
\label{E:SetTheor}
\sol^{12} \sol^{23} \sol^{12} = \sol^{23} \sol^{12} \sol^{23},
\end{equation}
and, conversely, every bijection of~$\SS \times \SS$ into itself that satisfies \eqref{E:SetTheor} induces a solution of~YBE that maps~$\SS^{\otimes 2}$ into itself. Such solutions of YBE are called \emph{set-theoretic} because they are entirely determined by their action on the basis.

\begin{defi}
\label{D:SetTheor}
A \emph{set-theoretic solution of YBE} is a pair~$(\SS, \sol)$ where $\SS$ is a set and $\sol$ is a bijection of~$\SS \times \SS$ into itself that satisfies~\eqref{E:SetTheor}. In this case, we denote by $\sol_1(\xx, \yy)$ and $\sol_2(\xx, \yy)$ the first and the second entry of~$\sol(\xx, \yy)$. A set-theoretic solution $(\SS,\sol)$ of YBE is called \emph{nondegenerate} if, for every~$\aa$ in~$\SS$, the left-translation $\yy\mapsto \sol_1(\aa,\yy)$ is one-to-one and the right-translation $\xx\mapsto \sol_2(\xx,\aa)$ are one-to-one. 
\end{defi} 

A set-theoretic solution~$\sol$ of~YBE can then be characterized in terms of algebraic laws obeyed by the associated maps~$\sol_1$ and~$\sol_2$ viewed as binary operation on the reference set~$\SS$. 

\begin{lemm}
\label{L:SolRack}
Assume that $(\SS, \sol)$ is a set-theoretic solution of YBE. For $\aa, \bb$ in~$\SS$, write $\aa \opl \bb$ for~$\sol_1(\aa, \bb)$ and $\aa \opr \bb$ for $\sol_2(\aa, \bb)$. Then the operations~$\opl$ and~$\opr$ obey the laws
\begin{gather}
\label{E:Birack1}
(\xx\opl\yy) \opl ((\xx\opr\yy) \opl \zz) = \xx \opl (\yy\opl\zz),\\
\label{E:Birack2}
(\xx\opl\yy) \opr ((\xx\opr\yy) \opl \zz) = (\xx \opr (\yy\opl\zz)) \opl (\yy\opr\zz),\\
\label{E:Birack3}
(\xx\opr\yy) \opr \zz = (\xx \opr (\yy\opl\zz)) \opr (\yy\opr\zz).
\end{gather}
Conversely, if $\opl$ and $\opr$ are binary operations on~$\SS$ that satisfy \eqref{E:Birack1}--\eqref{E:Birack3} and $\sol$ is the map of $\SS\times\SS$ to itself defined by $\sol(\aa, \bb) = (\aa \opl \bb, \aa \opr \bb)$, then $(\SS, \sol)$ is a set-theoretic solution of YBE. In the above context, $(\SS, \sol)$ is nondegenerate if and only if left-translations of~$\opl$ and the right-translations of~$\opr$ are one-to-one.
\end{lemm}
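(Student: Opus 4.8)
The plan is to show that the three laws \eqref{E:Birack1}--\eqref{E:Birack3} are exactly the three coordinatewise components of the single braid relation \eqref{E:SetTheor}; because \eqref{E:SetTheor} is an equality between two maps from $\SS \times \SS \times \SS$ to itself, it holds if and only if the two sides agree at every triple $(\xx, \yy, \zz)$, and this one computation then yields both implications at once.

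First I would fix the convention that in a product such as $\sol^{12}\sol^{23}\sol^{12}$ the rightmost factor is applied first, and expand both sides of \eqref{E:SetTheor} on a generic $(\xx, \yy, \zz)$, systematically replacing $\sol(\aa, \bb)$ by $(\aa \opl \bb, \aa \opr \bb)$. For the left-hand side, applying $\sol^{12}$, then $\sol^{23}$, then $\sol^{12}$ produces successively $(\xx \opl \yy, \xx \opr \yy, \zz)$, then $(\xx \opl \yy, (\xx \opr \yy) \opl \zz, (\xx \opr \yy) \opr \zz)$, and finally
\begin{equation*}
\bigl((\xx \opl \yy) \opl ((\xx \opr \yy) \opl \zz),\ (\xx \opl \yy) \opr ((\xx \opr \yy) \opl \zz),\ (\xx \opr \yy) \opr \zz\bigr).
\end{equation*}
Running the same computation for the right-hand side, applying $\sol^{23}$, then $\sol^{12}$, then $\sol^{23}$, yields
\begin{equation*}
\bigl(\xx \opl (\yy \opl \zz),\ (\xx \opr (\yy \opl \zz)) \opl (\yy \opr \zz),\ (\xx \opr (\yy \opl \zz)) \opr (\yy \opr \zz)\bigr).
\end{equation*}

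Then I would simply match the two triples coordinate by coordinate: the equality of first coordinates is precisely \eqref{E:Birack1}, that of the second coordinates is \eqref{E:Birack2}, and that of the third coordinates is \eqref{E:Birack3}. As $(\xx, \yy, \zz)$ ranges over $\SS \times \SS \times \SS$, this establishes that \eqref{E:SetTheor} is equivalent to the conjunction of the three laws; reading the equivalence in one direction gives the forward implication (a solution yields operations obeying the laws, via $\aa \opl \bb = \sol_1(\aa, \bb)$ and $\aa \opr \bb = \sol_2(\aa, \bb)$), and in the other direction gives the converse (operations obeying the laws make $\sol(\aa, \bb) = (\aa \opl \bb, \aa \opr \bb)$ satisfy \eqref{E:SetTheor}). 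For the nondegeneracy clause there is nothing to compute: by the very definitions $\sol_1(\aa, \yy) = \aa \opl \yy$ and $\sol_2(\xx, \aa) = \xx \opr \aa$, so the condition of Definition~\ref{D:SetTheor} that each map $\yy \mapsto \sol_1(\aa, \yy)$ and each map $\xx \mapsto \sol_2(\xx, \aa)$ be one-to-one is literally the requirement that every left-translation of $\opl$ and every right-translation of $\opr$ be one-to-one.

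I do not expect a genuine obstacle here: the content is a single bookkeeping computation, and the laws were evidently designed to be its coordinatewise shadow. The one point demanding discipline is the order-of-composition convention, since adopting the opposite convention interchanges the roles of $\opl$ and $\opr$ and would permute the three laws; I would therefore state it explicitly at the outset. A secondary subtlety is that Definition~\ref{D:SetTheor} builds bijectivity of $\sol$ into the notion of a set-theoretic solution, whereas \eqref{E:Birack1}--\eqref{E:Birack3} capture only the braid identity \eqref{E:SetTheor}; in the converse direction I would note that the computation delivers \eqref{E:SetTheor} outright, with invertibility of $\sol$ to be granted by hypothesis or supplied in the nondegenerate case under consideration.
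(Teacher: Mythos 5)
Your proof is correct and is essentially the paper's own argument: the paper carries out exactly the same coordinatewise expansion of $\sol^{12}\sol^{23}\sol^{12}$ and $\sol^{23}\sol^{12}\sol^{23}$, only presented as a colouring of the braid diagrams $\sig1\sig2\sig1$ and $\sig2\sig1\sig2$ via the rule~\eqref{E:RuleBis} (Figure~\ref{F:Birack}) rather than algebraically, with the three laws read off coordinate by coordinate exactly as you do, and the remaining verifications (nondegeneracy included) dismissed as straightforward. Your explicit flagging of the composition-order convention and of the bijectivity of~$\sol$ in the converse direction is, if anything, slightly more careful than the paper's treatment.
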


\begin{proof}
We may appeal to braid colourings, using colours from~$\SS$ and the rule
\begin{equation}
\label{E:RuleBis}
\VR(6,6)\begin{picture}(12,0)(0,4)
\psbezier[linewidth=2pt,border=3pt](0,0)(4,0)(4,8)(8,8)
\psbezier[linewidth=2pt,border=3pt](0,8)(4,8)(4,0)(8,0)
\pcline[linewidth=2pt,border=3pt](-1,0)(0,0)
\pcline[linewidth=2pt,border=3pt](-1,8)(0,8)
\pcline[linewidth=2pt,border=3pt]{->}(8,8)(10,8)
\pcline[linewidth=2pt,border=3pt]{->}(8,0)(10,0)
\put(-4,7.5){$\bb$}
\put(-4,-0.5){$\aa$}
\put(11.5,7.5){$\aa \opr \bb$}
\put(11.5,-0.5){$\aa \opl \bb$,}
\end{picture}
\end{equation}
that is, the extension of~\eqref{E:Rules} in which both crossing strands may change colours. Then saying that $\sol$ satisfies~\eqref{E:SetTheor} amounts to saying that, for every choice of the input colours, the output colours of the diagrams $\sig1\sig2\sig1$ and $\sig2\sig1\sig2$ coincide. We read on Figure~\ref{F:Birack} that this happens exactly when the operation~$\opr$ and~$\opl$ obey the laws of~\eqref{E:Birack1}--\eqref{E:Birack3}. The other verifications are then straightforward.
\end{proof}

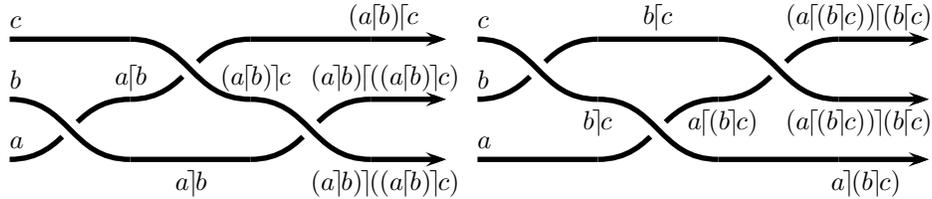
\begin{figure}[htb]
\begin{picture}(61,24)(0,-5)
\psbezier[linewidth=2pt,border=3pt](0,0)(8,0)(8,8)(16,8)
\psbezier[linewidth=2pt,border=3pt](0,8)(8,8)(8,0)(16,0)
\psline[linewidth=2pt,border=3pt](0,16)(16,16)
\psbezier[linewidth=2pt,border=3pt](16,8)(24,8)(24,16)(32,16)
\psbezier[linewidth=2pt,border=3pt](16,16)(24,16)(24,8)(32,8)
\psline[linewidth=2pt,border=3pt](16,0)(32,0)
\psbezier[linewidth=2pt,border=3pt](32,0)(40,0)(40,8)(48,8)
\psbezier[linewidth=2pt,border=3pt](32,8)(40,8)(40,0)(48,0)
\psline[linewidth=2pt,border=3pt](32,16)(48,16)
\psline[linewidth=2pt,border=3pt]{->}(48,0)(58,0)
\psline[linewidth=2pt,border=3pt]{->}(48,8)(58,8)
\psline[linewidth=2pt,border=3pt]{->}(48,16)(58,16)
\put(0,1.5){$\aa$}
\put(0,9.5){$\bb$}
\put(0,17.5){$\cc$}
\put(14,10){$\aa\!\opr\!\bb$}
\put(22,-4){$\aa\!\opl\!\bb$}
\put(28,10){$(\aa\!\opr\!\bb)\!\opl\!\cc$}
\put(45,18){$(\aa\!\opr\!\bb)\!\opr\!\cc$}
\put(40,-4){$(\aa\!\opl\!\bb)\!\opl\!((\aa\!\opr\!\bb)\!\opl\!\cc)$}
\put(40,10){$(\aa\!\opl\!\bb)\!\opr\!((\aa\!\opr\!\bb)\!\opl\!\cc)$}
\end{picture}
\begin{picture}(61,24)(0,-5)
\psbezier[linewidth=2pt,border=3pt](0,8)(8,8)(8,16)(16,16)
\psbezier[linewidth=2pt,border=3pt](0,16)(8,16)(8,8)(16,8)
\psline[linewidth=2pt,border=3pt](0,0)(16,0)
\psbezier[linewidth=2pt,border=3pt](16,0)(24,0)(24,8)(32,8)
\psbezier[linewidth=2pt,border=3pt](16,8)(24,8)(24,0)(32,0)
\psline[linewidth=2pt,border=3pt](16,16)(32,16)
\psbezier[linewidth=2pt,border=3pt](32,8)(40,8)(40,16)(48,16)
\psbezier[linewidth=2pt,border=3pt](32,16)(40,16)(40,8)(48,8)
\psline[linewidth=2pt,border=3pt](32,0)(48,0)
\psline[linewidth=2pt,border=3pt]{->}(48,0)(60,0)
\psline[linewidth=2pt,border=3pt]{->}(48,8)(60,8)
\psline[linewidth=2pt,border=3pt]{->}(48,16)(60,16)
\put(0,1.5){$\aa$}
\put(0,9.5){$\bb$}
\put(0,17.5){$\cc$}
\put(14,4){$\bb\!\opl\!\cc$}
\put(22,18){$\bb\!\opr\!\cc$}
\put(28,4){$\aa\!\opr\!(\bb\!\opl\!\cc)$}
\put(47,-4){$\aa\!\opl\!(\bb\!\opl\!\cc)$}
\put(41,18){$(\aa\!\opr\!(\bb\!\opl\!\cc))\!\opr\!(\bb\!\opr\!\cc)$}
\put(41,4){$(\aa\!\opr\!(\bb\!\opl\!\cc))\!\opl\!(\bb\!\opr\!\cc)$}
\end{picture}
\caption{\sf\smaller Colouring braids using the rule~\eqref{E:RuleBis} is invariant under braid relations if and only if the birack laws~\eqref{E:Birack1}--\eqref{E:Birack3} are obeyed.}
\label{F:Birack}
\end{figure}

The following terminology is then natural:

\begin{defi}
\label{D:Birack}
A \emph{birack} is a system~$(\SS, \opl, \opr)$ consisting of a set~$\SS$ equipped with two binary operations~$\opl$ and~$\opr$ that satisfy~\eqref{E:Birack1}--\eqref{E:Birack3} and such that the left-translations of~$\opl$ and the right-translations of~$\opr$ are one-to-one. 
\end{defi}

So Lemma~\ref{L:SolRack} says that a set-theoretic solution of the YBE, that is, a set-theoretic $R$-matrix, is one and the same thing as a birack. Let us mention here the beautiful result of Rump~\cite{Rum} who observed that inverting the operations of a birack enables one to replace biracks and the rather complicated laws~\eqref{E:Birack1}--\eqref{E:Birack3} with equivalent structures made of a set equipped with a binary operation obeying the unique more simple law $(\xx \op \yy) \op (\xx \op \zz) = (\yy \op \xx) \op (\yy \op \zz)$, see~\cite[Chapter~XII]{Garside}. 

Returning to selfdistributive structures, we immediately obtain the following simple connection:

\begin{lemm}
Assume that $\op$ is a binary operation on a set~$\SS$. For~$\aa, \bb$ in~$\SS$, define $\aa \op_0 \bb = \aa$. Then $(\SS, \op, \op_0)$ is a birack if and only if $(\SS, \op)$ is a rack.
\end{lemm}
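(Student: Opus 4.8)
The plan is to substitute $\opl = \op$ and $\opr = \op_0$ into the three birack identities \eqref{E:Birack1}--\eqref{E:Birack3} and into the nondegeneracy requirements of Definition~\ref{D:Birack}, and to check that everything collapses exactly to the two defining conditions of a rack: the LD-law and the bijectivity of left-translations. The decisive simplification is that every occurrence of~$\opr$ merely returns its left argument, since $\aa \op_0 \bb = \aa$. First I would treat the three laws. In \eqref{E:Birack1} the subterm $\xx \opr \yy$ equals~$\xx$, so the left-hand side becomes $(\xx \op \yy) \op (\xx \op \zz)$ and the right-hand side is $\xx \op (\yy \op \zz)$; thus \eqref{E:Birack1} is literally the left-selfdistributivity law \eqref{E:LDBis}. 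In \eqref{E:Birack2} the outermost operation on the left is~$\opr$, so the whole left-hand side collapses to $\xx \op \yy$; on the right one has $\xx \opr (\yy \opl \zz) = \xx$ and $\yy \opr \zz = \yy$, so the right-hand side is again $\xx \op \yy$, and the identity holds for every~$\op$. Likewise both sides of \eqref{E:Birack3} reduce to~$\xx$, so that law is vacuous as well. Hence, among the three birack relations, only \eqref{E:Birack1} carries information, and it says precisely that $(\SS, \op)$ is an LD-system.

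It then remains to match the nondegeneracy data. The right-translation $\xx \mapsto \xx \opr \aa$ of~$\op_0$ is the identity map, hence automatically one-to-one, so that half of the condition is free; the left-translations of~$\opl$ are exactly those of~$\op$. The one genuine point, and the step I expect to be the main obstacle, is to reconcile the ``one-to-one'' of Definition~\ref{D:Birack} with the ``bijective'' demanded by Definition~\ref{D:Rack} (injectivity alone is strictly weaker, as shown for instance by $\xx \op \yy = \yy{+}1$ on the non-negative integers, which is LD with injective but non-surjective left-translations). To bridge this I would invoke that a birack is a set-theoretic solution of the YBE (Lemma~\ref{L:SolRack}), so that by Definition~\ref{D:SetTheor} the map $\sol(\aa, \bb) = (\aa \op \bb, \aa)$ is a bijection of $\SS \times \SS$ onto itself. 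Solving $\sol(\aa, \bb) = (\uu, \vv)$ forces $\aa = \vv$ and then requires $\bb$ to be a solution of $\vv \op \bb = \uu$; so the surjectivity of~$\sol$ is equivalent to the surjectivity of every left-translation of~$\op$, and likewise for injectivity. Thus being a birack supplies not merely injective but bijective left-translations.

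Assembling the two directions is then immediate. If $(\SS, \op, \op_0)$ is a birack, \eqref{E:Birack1} gives the LD-law and the bijectivity of~$\sol$ gives bijective left-translations, so $(\SS, \op)$ is a rack. Conversely, if $(\SS, \op)$ is a rack, then \eqref{E:Birack1} holds by the LD-law, while \eqref{E:Birack2} and \eqref{E:Birack3} hold identically; the bijectivity of the left-translations makes $\sol(\aa, \bb) = (\aa \op \bb, \aa)$ a bijection of $\SS \times \SS$ and in particular makes the left-translations of~$\opl$ one-to-one, and the right-translations of~$\opr = \op_0$ are one-to-one for free. Hence $(\SS, \op, \op_0)$ is a birack, completing the equivalence.
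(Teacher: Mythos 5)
Your reduction of the three birack laws is correct and is precisely the paper's own argument: with $\opr = \op_0$, law \eqref{E:Birack1} becomes the left-selfdistributivity law, while \eqref{E:Birack2} and \eqref{E:Birack3} hold identically, so the equational content of the birack axioms collapses to (LD). The paper's proof consists of exactly this observation (phrased diagrammatically, as the specialization of the colouring rule \eqref{E:RuleBis} to \eqref{E:Rules}), and it does not discuss the translation conditions at all; up to that point you match the paper, and you go further by trying to account for them.

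However, the bridging step you use to pass from ``one-to-one'' (Definition~\ref{D:Birack}) to ``bijective'' (Definition~\ref{D:Rack}) does not work. You invoke Lemma~\ref{L:SolRack} to conclude that a birack yields a map $\sol(\aa, \bb) = (\aa \op \bb, \aa)$ that is a bijection of $\SS \times \SS$. But no purely equational hypothesis can force surjectivity, and your own example refutes the claim: on $\SS = \NNNN$ with $\xx \op \yy = \yy + 1$, the laws \eqref{E:Birack1}--\eqref{E:Birack3} hold, the left-translations of~$\op$ and the right-translations of~$\op_0$ are injective, yet $\sol(\aa, \bb) = (\bb + 1, \aa)$ misses every pair $(0, \vv)$, so $(\SS, \sol)$ is \emph{not} a set-theoretic solution in the sense of Definition~\ref{D:SetTheor}. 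In other words, if Definition~\ref{D:Birack} is read literally (one-to-one $=$ injective), this structure is a birack whose underlying $(\SS, \op)$ is not a rack, so the lemma itself fails and cannot be proved; your appeal to Lemma~\ref{L:SolRack} merely imports the bijectivity you are trying to establish (the converse part of that lemma is stated loosely in the paper and needs bijectivity of~$\sol$ as an extra hypothesis). The honest resolution is interpretive rather than deductive: ``one-to-one'' in Definition~\ref{D:Birack}, as in the nondegeneracy clause of Definition~\ref{D:SetTheor}, must be read as ``bijective''---the standard convention for nondegenerate set-theoretic solutions, and automatic in the finite case the paper targets (Laver tables). With that reading the translation conditions match up immediately---the right-translations of~$\op_0$ are the identity, and the left-translations of~$\opl$ are those of~$\op$---and no detour through the Yang--Baxter equation is needed.
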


The verification has already been done, as this essentially amounts to checking that what remains from \eqref{E:Birack1}--\eqref{E:Birack3} when the operation~$\opr$ is the trivial operation~$\op_0$ is the fact that the operation~$\op$ obeys the left-selfdistributivity law: this corresponds to specializing~\eqref{E:RuleBis} into~\eqref{E:Rules}, that is, using Figure~\ref{F:Translation} (case of type~III${}_{\scriptscriptstyle++}$) instead of Figure~\ref{F:Birack}. In terms of $R$-matrices, we deduce the following result, which appears in~\cite{Dri} and belongs to folklore:

\begin{prop}
\label{P:YBE}
Assume that $(\SS, \op)$ is a (finite) rack. Let $\VV$ be a $\CCCC$-vector space based on a copy $(\ee_\aa)_{\aa \in \SS}$ of~$\SS$. Then the endomorphism of~$\VV^{\otimes2}$ defined by $\RR(\ee_\aa, \ee_\bb) = (\ee_{\aa \op \bb}, \ee_\aa)$ is a (set-theoretic) solution of YBE.
\end{prop}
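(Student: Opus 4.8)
The plan is to recognize that $\RR$ is nothing but the linear extension of a set-theoretic solution of YBE that the two preceding lemmas already produce for us, and then to transport the set-theoretic braid relation to the linear one. Concretely, $\RR$ is determined by the set map $\sol\colon\SS\times\SS\to\SS\times\SS$, $\sol(\aa,\bb)=(\aa\op\bb,\aa)$, in the sense that $\RR(\ee_\aa\otimes\ee_\bb)=\ee_{\sol_1(\aa,\bb)}\otimes\ee_{\sol_2(\aa,\bb)}$, so essentially all the work is to show that this $\sol$ obeys~\eqref{E:SetTheor}.

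First I would check that $\sol$ is a bijection of $\SS\times\SS$: its second coordinate recovers $\aa$, and since $(\SS,\op)$ is a rack the left-translation $\bb\mapsto\aa\op\bb$ is bijective, so $\bb$ is uniquely recovered from $\aa$ and $\aa\op\bb$. Hence $\RR$ merely permutes the basis $(\ee_\aa\otimes\ee_\bb)_{\aa,\bb\in\SS}$ of $\VV^{\otimes2}$ and therefore lies in $\GL(\VV^{\otimes2})$, which is part of what it means to be an $R$-matrix.

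Next I would invoke the two lemmas just established. Setting $\aa\op_0\bb=\aa$, the preceding Lemma shows that $(\SS,\op,\op_0)$ is a birack precisely because $(\SS,\op)$ is a rack. Writing $\opl=\op$ and $\opr=\op_0$ one has $\sol(\aa,\bb)=(\aa\opl\bb,\aa\opr\bb)$, so Lemma~\ref{L:SolRack} applies and yields that $(\SS,\sol)$ is a set-theoretic solution of YBE, i.e.\ $\sol$ satisfies the set-theoretic braid relation~\eqref{E:SetTheor}. This is where all the genuine content sits, and it is already packaged for us: the substantive input is exactly that the set-theoretic braid relation for this $\sol$ collapses to the left-selfdistributivity law, which is what Lemma~\ref{L:SolRack} encodes.

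Finally I would linearize. Because $\RR$ acts on basis vectors by the permutation $\sol$, the two endomorphisms $(\RR\otimes\id)(\id\otimes\RR)(\RR\otimes\id)$ and $(\id\otimes\RR)(\RR\otimes\id)(\id\otimes\RR)$ act on the basis vector $\ee_\aa\otimes\ee_\bb\otimes\ee_\cc$ of $\VV^{\otimes3}$ by the composite permutations $\sol^{12}\sol^{23}\sol^{12}$ and $\sol^{23}\sol^{12}\sol^{23}$ of $\SS^3$; their equality is precisely~\eqref{E:SetTheor}, so $\RR$ satisfies~\eqref{E:YBYB}. The only place where care is needed — and the step I would flag as the main (if modest) obstacle — is this bookkeeping: matching $\RR\otimes\id$ with $\sol$ acting on the first two coordinates and $\id\otimes\RR$ with $\sol$ on the last two, and confirming that composition of basis permutations corresponds to composition of the linear maps in the correct order. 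As a sanity check one can instead expand both sides directly on $(\aa,\bb,\cc)$, whereupon the equality of first coordinates reduces to $(\aa\op\bb)\op(\aa\op\cc)=\aa\op(\bb\op\cc)$ and the remaining two coordinates match trivially, reconfirming that the whole statement rests on the LD-law alone.
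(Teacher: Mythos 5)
Your proposal is correct and takes essentially the same route as the paper, which obtains Proposition~\ref{P:YBE} precisely by combining the preceding lemma (that $(\SS,\op,\op_0)$ is a birack exactly when $(\SS,\op)$ is a rack) with Lemma~\ref{L:SolRack}, the rack hypothesis entering only to make $\sol$, and hence $\RR$, invertible. The linearization bookkeeping and the direct expansion on $(\aa,\bb,\cc)$ reducing everything to the LD-law are exactly the verifications the paper leaves implicit.
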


By definition, a solution of YBE is an automorphism of the considered space: in the context of Proposition~\ref{P:YBE}, the endomorphism~$\RR$ is invertible if and only if the map $(\aa, \bb) \mapsto (\aa \op \bb, \aa)$ is a bijection of~$\SS \times \SS$, that is, if the left-translations associated with~$\op$ are bijective. This is the place where the assumption that $(\SS, \op)$ is a rack, and not only a general LD-system, is used. 

\subsubsection{The case of the Laver tables}

When we consider the Laver tables, they are indeed finite LD-systems, but they are not racks (except in the trivial case of~$\AA_0$): in the table of~$\AA_\nn$, the row of~$2^\nn - 1$ is constant, hence very far from being bijective. The rest of the construction works, so one naturally obtains is a ``pseudo-solution'' of YBE, defined to be an endomorphism that satisfies~\eqref{E:YBYB2} but need not be invertible. For instance, the pseudo-$R$-matrix associated with the Laver table~$\AA_1$ corresponds to the (non-invertible) matrix $\left(\begin{smallmatrix}0&0&1&0\\0&0&0&0\\1&1&0&0\\0&0&0&1\end{smallmatrix}\right)$, whereas that associated with~$\AA_2$ is  
\VR(17,17)\smash{$\left(\begin{smallmatrix}
0&0&0&0&0&0&0&0&0&0&0&0&0&0&0&0\\
0&0&0&0&0&0&0&0&0&0&0&0&0&0&0&0\\
0&0&0&0&0&0&0&0&0&0&0&0&0&0&0&0\\
0&0&0&0&0&0&0&0&0&0&0&0&1&0&0&0\\
1&0&1&0&0&0&0&0&0&0&0&0&0&0&0&0\\
0&0&0&0&0&0&0&0&0&0&0&0&0&0&0&0\\
0&0&0&0&0&0&0&0&0&0&0&0&0&0&0&0\\
0&0&0&0&0&0&0&0&0&0&0&0&0&1&0&0\\
0&0&0&0&0&0&0&0&0&0&0&0&0&0&0&0\\
0&0&0&0&1&0&1&0&0&0&0&0&0&0&0&0\\
0&0&0&0&0&0&0&0&0&0&0&0&0&0&0&0\\
0&0&0&0&0&0&0&0&0&0&0&0&0&0&1&0\\
0&1&0&1&0&0&0&0&1&1&1&1&0&0&0&0\\
0&0&0&0&0&1&0&1&0&0&0&0&0&0&0&0\\
0&0&0&0&0&0&0&0&0&0&0&0&0&0&0&0\\
0&0&0&0&0&0&0&0&0&0&0&0&0&0&0&1
\end{smallmatrix}\right)$}. In general, the pseudo-$R$-matrix associated with the Laver table~$\AA_\nn$ is a square matrix of size~$2^{2\nn}$ that contains $2^\nn$ entries equal to~$1$. The obvious question is whether such ``non-invertible $R$-matrices'' can be of any use, typically in connection with the theory of Hopf algebras. In particular, one can wonder whether $\qq$-deformations of such matrices might exist and be useful. As in Subsection~\ref{SS:Colouring}, many results originally established using quandles have been subsequently extended to arbitrary racks~\cite{AFGV}. Also, racks proved to play a fundamental r\^ole in the classification of finite-dimensional pointed Hopf algebras~\cite{AnG}. The question of whether one could go one step further and work with more general LD-systems, specifically with Laver tables, remains open.

\section{The well-foundedness of the braid ordering}

The second result by Laver we shall mention here involves Artin's braid groups and their ordering(s). Braid groups were proved to be orderable, that is, to admit a left-invariant linear ordering, in 1992 \cite{Dfa, Dfb}, by an ordering that proved both to be canonical, in the sense that many different approaches converge to the same notion, and to have rich combinatorial properties~\cite{Dhr}. Using his approach to selfdistributivity via recursive normal forms, Rich Laver proved in 1995 what is probably the deepest result known so far about this braid ordering, namely that its restriction to the monoid of positive braids is a well-ordering. At the moment, this mainly led to applications of logical flavour, but the result inspires several promising ideas for further work.

This section contains four subsections. First, the braid ordering, Laver's result, and its direct consequences are described in Subsection~\ref{SS:Order}. Subsequent refinements are mentioned in Subsection~\ref{SS:Refinements}. Next, applications to unprovability statements are stated in Subsection~\ref{SS:Unprovability}. Finally, we discuss more hypothetic applications involving the Conjugacy Problem of braids and, possibly, the Markov equivalence relation, in Subsection~\ref{SS:Mu}. 

\subsection{The well-ordering of positive braids}
\label{SS:Order}

We recall from Subsection~\ref{SS:Colouring} that the $\nn$-strand braid group, that is, the group of isotopy classes of $\nn$-strand braid diagrams, is denoted by~$\BR\nn$. The group~$\BR\nn$ admits a more or less canonical family of generators (`the Artin generators') $\sig1 \wdots \sig{\nn-1}$ in terms of which $\BR\nn$ admits the presentation~\eqref{E:BraidPres}. Thus every $\nn$-strand braid is represented by various words in the alphabet $\{\sigg1{\pm1} \wdots \sigg{\nn-1}{\pm1}\}$, naturally called \emph{$\nn$-strand braid words}, two such braid words representing the same braid if and only if they can be transformed into one another using the relations of~\eqref{E:BraidPres} and the free group relations~$\sig\ii \siginv\ii = \siginv\ii \sig\ii = 1$.

\begin{defi}\cite{Dfb}
\label{D:BraidOrder}
\ITEM1 A braid word~$\ww$ is called \emph{$\sig\ii$-positive} if it contains the letter~$\sig\ii$ but neither the letter~$\siginv\ii$ nor any letter~$\sigg\jj{\pm1}$ with $\jj < \ii$. 

\ITEM2 For~$\br, \br'$ in~$B_\nn$, say that $\br \lD \br'$ holds if, among the various braid words that represent $\br\inv \br'$, at least one is $\sig\ii$-positive for some~$\ii$.
\end{defi}

In other words, $\br \lD \br'$ holds if the quotient-braid $\br\inv \br'$ admits an expression  in which the generator~$\sig\ii$ with least index occurs positively only. For instance, consider $\br = \sig1$ and $\br' = \sig2\sig1$. Then the quotient $\br\inv \br'$ is~$\siginv1 \sig2 \sig1$, so the braid word $\siginv1 \sig2 \sig1$ is one expression of this quotient, and it is neither $\sig1$-positive nor $\sig2$-positive. Now another expression of the same quotient-braid is $\sig2 \sig1 \siginv2$, which is a $\sig1$-positive braid word. Therefore $\sig1 \lD \sig2\sig1$ is declared to be true.

\begin{prop}\cite{Dfa, Dfb}
For every~$\nn$, the relation~$\lD$ is a linear ordering on the group~$\BR\nn$ and it is left-invariant, that is, $\br \lD \br'$ implies $\brr\br \lD \brr\br'$ for every~$\brr$.
\end{prop}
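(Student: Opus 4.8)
The plan is to recast the relation in terms of a positive cone and to reduce the whole statement to two structural properties of $\sigma$-positivity. Call a braid of~$\BR\nn$ \emph{$\sigma$-positive} if it admits at least one $\sig\ii$-positive representative word for some~$\ii$, and let $P$ denote the set of such braids; dually, call a word $\sig\ii$-\emph{negative} if it contains $\siginv\ii$ but neither $\sig\ii$ nor any $\sigg\jj{\pm1}$ with $\jj < \ii$, so that $\br$ lies in $P$ exactly when $\br\inv$ admits a $\sig\ii$-negative word. By Definition~\ref{D:BraidOrder}, $\br \lD \br'$ holds precisely when $\br\inv\br' \in P$. The two properties I would target are: \textbf{(A)} the trivial braid is not $\sigma$-positive (\emph{acyclicity}); and \textbf{(C)} every braid of~$\BR\nn$ is $\sigma$-positive, or is trivial, or has $\sigma$-positive inverse (\emph{comparison}).

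First I would dispatch the formal facts. Left-invariance needs no property of~$P$ at all, since $(\brr\br)\inv(\brr\br') = \br\inv\br'$, so the defining condition for $\br \lD \br'$ and for $\brr\br \lD \brr\br'$ is literally the same. That $P$ is closed under multiplication is a one-line word argument: if $\uu$ is $\sig\ii$-positive, $\vv$ is $\sig\jj$-positive, and $\ii \le \jj$, then $\uu\vv$ contains~$\sig\ii$, contains no letter of index~$<\ii$, and contains no~$\siginv\ii$, hence is $\sig\ii$-positive; transitivity of~$\lD$ then follows because $\br\inv\br'' = (\br\inv\br')((\br')\inv\br'')$. Granting~(A) and~(C), linearity is immediate: irreflexivity is~(A) applied to $\br\inv\br = 1$; and for $\brr = \br\inv\br'$, property~(C) yields one of $\br \lD \br'$, $\br = \br'$, $\br' \lD \br$ (the last since $(\br')\inv\br = \brr\inv$). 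These are mutually exclusive by~(A): $\brr \in P$ or $\brr\inv \in P$ together with $\brr = 1$ would put~$1$ in~$P$, while $\brr \in P$ and $\brr\inv \in P$ would give $1 = \brr\brr\inv \in P$ by closure under product.

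It remains to prove~(A) and~(C), and here I would invoke the self-distributive machinery of the excerpt. For~(A) I would use the partial Hurwitz action of Lemma~\ref{L:PartialAction} for a \emph{free} LD-system~$\FF$, carrying a linear order~$\less$ with $\aa \less \aa\op\bb$, as recalled above. Colour the $\nn$ strands by an increasing tuple of distinct generators $\vec\aa = (\aa_1 \wdots \aa_\nn)$ chosen, using Lemma~\ref{L:PartialAction}\ITEM1, so that the action of a given $\sig\ii$-positive word~$\ww$ is defined, and track the $\ii$th coordinate. The generators of index~$<\ii$ never occur, so the coordinates of index below~$\ii$ are never touched, while the $\ii$th coordinate is acted on only by the letters~$\sig\ii$; each such letter replaces that coordinate~$\xx$ by $\xx\op(\cdots) \succ \xx$, and no~$\siginv\ii$ occurs to reverse this, so the $\ii$th coordinate ends strictly above~$\aa_\ii$. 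Hence $\vec\aa\act\ww \ne \vec\aa$, so $\ww$ does not represent~$1$; by Lemma~\ref{L:PartialAction}\ITEM2 this is a property of the braid, which is~(A). For~(C) I would run handle reduction: repeatedly deleting $\sig\ii$-handles rewrites any braid word into an equivalent $\sig\ii$-positive, $\sig\ii$-negative, or empty word, placing the braid in~$P$, or making its inverse lie in~$P$, or reducing it to~$1$.

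The main obstacle is concentrated in~(A), and more precisely in its single nontrivial input, the \emph{orderability of the free LD-system}~$\FF$: the existence of a linear order satisfying $\aa \less \aa\op\bb$ amounts to the acyclicity of iterated left-multiplication in~$\FF$ and is a deep theorem belonging to the same self-distributive circle as Laver's results. Once that order is available the colouring computation for~(A) is routine, and the termination of handle reduction required for~(C), though combinatorially delicate, is elementary by comparison.
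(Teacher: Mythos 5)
Your proposal is correct, but note first that the paper contains no proof of this proposition at all: it is stated as a citation of \cite{Dfa, Dfb}, and the underlying mechanism is only sketched in Subsection~\ref{SS:Colouring}, after Lemma~\ref{L:PartialAction}. Measured against that sketch and the cited proofs, your formal skeleton is exactly the standard one: left-invariance is indeed vacuous because $(\brr\br)\inv(\brr\br') = \br\inv\br'$, and the positive-cone reduction to acyclicity~(A) plus comparison~(C) is how \cite{Dfb} organizes the argument. Your proof of~(A) --- in a $\sig\ii$-positive word only the letters $\sig\ii$ can touch the $\ii$th colour, and each occurrence strictly increases it for a linear order $\less$ on a free LD-system satisfying $\xx \less \xx \op \yy$ --- is precisely the acyclicity argument of \cite{Dfb}, i.e.\ the route the paper itself points to. Where you genuinely diverge is~(C): \cite{Dfb} derives comparison from the same LD-machinery (any two elements of the free monogenic LD-system are comparable under iterated left division, and the colourings transport this trichotomy to braids), whereas you appeal to handle reduction, a later and independent technique of the same author. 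Both are legitimate; handle reduction buys effectivity (a practical algorithm deciding $\lD$), while the LD-route keeps the whole proof inside one framework. But do not present handle-reduction termination as ``elementary by comparison'': it is a substantial theorem whose known proofs themselves lean on Property~(A) and on Garside-theoretic properties of positive braids. One small slip to repair: Lemma~\ref{L:PartialAction}\ITEM1 guarantees the existence of \emph{some} sequence $\vec\aa$ on which a given word acts, not of one consisting of increasing distinct generators. Since your tracking argument uses nothing about $\vec\aa$ beyond definedness of the action --- the inequality $\xx \less \xx \op \yy$ holds globally in the free LD-system --- you should simply drop that extra requirement.
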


The braid order~$\lD$ will be referred to here as the \emph{D-ordering} of braids (`Dehornoy ordering'). There is no need to mention a braid index here, as one shows that the D-ordering on~$\BR{\nn-1}$ is the restriction of the D-ordering on~$\BR\nn$ when $\BR{\nn-1}$ is embedded in~$\BR\nn$ by adding an $\nn$th strand on the top of the diagrams. Going to the limit yields a left-invariant ordering on the limit group~$\BR\infty$. Note that, for $\nn = 2$, the group~$\BR\nn$ is the free group generated by~$\sig1$, so it is isomorphic to the additive group of integers and the associated D-ordering corresponds to the usual ordering of integers via $\pp \mapsto \sigg1\pp$. 

For $\nn \ge 3$, the D-ordering of $\nn$-strand braids is not right-invariant, and it is actually easy to show that no left-invariant ordering of~$B_\nn$ may be right-invariant. However, Laver proved

\begin{thrm}[Laver, \cite{Lve}]
\label{T:LeftMultSig}
For all~$\br$ in~$B_\nn$ and $\ii$ in~$\{1\wdots \nn-1\}$, the relation $\br \lD \sig\ii\br$ is satisfied.
\end{thrm}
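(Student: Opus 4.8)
The plan is to reduce the statement, via the left-invariance of~$\lD$, to a claim about positive braids only, and then to settle that claim using the self-distributive Hurwitz action of Subsection~\ref{SS:Colouring}. First I would rewrite the inequality: by left-invariance, $\br \lD \sig\ii\br$ is equivalent to $1 \lD \br\inv\sig\ii\br$, i.e. to the assertion that \emph{every conjugate of a generator is $\sigma$-positive}. Next I would remove the negative part of~$\br$ using Garside's half-turn braid~$\Delta$, which conjugates generators to generators, $\Delta\sig\ii\Delta\inv = \sig{\nn-\ii}$, and is such that $\Delta^\kk\br$ is positive for~$\kk$ large. Writing $\br = \Delta^{-\kk}\pp$ with $\pp \in \BP\nn$, one gets $\sig\ii\br = \Delta^{-\kk}\sig{\ii'}\pp$ with $\ii' = \ii$ or $\nn-\ii$, so applying left-invariance once more reduces $\br \lD \sig\ii\br$ to $\pp \lD \sig{\ii'}\pp$, where now both $\pp$ and $\sig{\ii'}\pp$ lie in~$\BP\nn$. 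Hence it suffices to prove, for every positive braid~$\pp$ and every generator~$\sig\jj$, that $\pp \lD \sig\jj\pp$.

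For this positive statement I would use the Hurwitz action of Lemma~\ref{L:Action}\ITEM2 with colours in a free LD-system~$\SS$ carrying the canonical linear ordering~$<$ for which $\aa < \aa\op\bb$ holds (recalled at the end of Subsection~\ref{SS:Colouring}). The decisive simplification is that, $\pp$ and $\sig\jj\pp$ being positive, the action is \emph{total}: no division is ever needed, so $\vec\xx\act\pp$ and $\vec\xx\act(\sig\jj\pp) = (\vec\xx\act\sig\jj)\act\pp$ are both defined for the sequence $\vec\xx = (\xx_1 \wdots \xx_\nn)$ of free generators. Since the braid ordering is read off the lexicographic comparison of the coloured outputs, it is enough to establish $\vec\xx\act\pp <_{\mathrm{lex}} (\vec\xx\act\sig\jj)\act\pp$. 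Now $\vec\xx\act\sig\jj$ is obtained from~$\vec\xx$ by the single elementary move $\xx_\jj \mapsto \xx_\jj\op\xx_{\jj+1}$, $\xx_{\jj+1}\mapsto \xx_\jj$, so the two starting sequences agree up to position~$\jj-1$ and at position~$\jj$ the entry strictly increases, because $\xx_\jj < \xx_\jj\op\xx_{\jj+1}$.

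The core of the proof is then to show that this ``leading increase'' survives the application of~$\pp$, which I would attempt by induction on the length of a positive word representing~$\pp$, tracking the discrepancy between the two sequences crossing by crossing. When the first discrepant position lies away from the two strands of the crossing currently applied, it is either left untouched or shifted in a controlled way and one stays in the same situation; the routine cases cause no trouble. The delicate case, and the point where I expect all the difficulty to concentrate, is when the leading difference sits exactly on the \emph{left} strand of the active crossing: there the competing new entries are $\aa\op\bb$ and $\aa'\op\bb'$ with $\aa < \aa'$ but with the right operands also differing, so the naive invariant ``the first discrepant entry increases'' is \emph{not} preserved. One must instead carry along a finer invariant recording that the larger sequence is a genuine self-distributive expansion of the smaller one, and prove that \emph{this} invariant is stable under each positive crossing. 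Verifying that stability is exactly where the deep order-theoretic properties of free LD-systems---linearity together with the acyclicity/comparison property---are needed, which is precisely Laver's hard contribution (first obtained under Axiom~$\AxI3$ and only later made elementary); this is the main obstacle.
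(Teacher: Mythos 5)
Your outer reduction is correct and clean: left-invariance plus the Garside decomposition $\br = \Delta^{-\kk}\pp$ with $\pp \in \BP\nn$, together with $\Delta\sig\ii\Delta\inv = \sig{\nn-\ii}$, does reduce the theorem to the claim $\pp \lD \sig\jj\pp$ for positive~$\pp$, and the framework you then set up (colourings by an ordered free LD-system and lexicographic comparison of outputs, as in~\cite{Dfb}) is exactly the one the paper says Laver used. Note that the paper itself contains no proof of this theorem: it cites~\cite{Lve} and records only that Laver's argument combines such colourings with ``a fine combinatorial analysis of the latter structures by means of normal forms of their elements introduced by tricky recursive definitions''. So the comparison can only be made against that description, and your plan reproduces its easy outer layer.

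The gap is that everything after the reduction is a statement of intent, and the step you leave open is not a technical residue but the entire content of the theorem. The ``finer invariant'' (that the larger sequence be ``a genuine self-distributive expansion of the smaller one'') is never defined, and no definition depending only on the two colour sequences can work, because the D-ordering is not right-invariant even on positive braids: in $\BR3$ one has $\sig2 \lD \sig1$ (the word $\siginv2\sig1$ is $\sig1$-positive), and yet $\sigg12 \lD \sig2\sig1$ (since $\sigg1{-2}\sig2\sig1 = \sig2\sig1\sigg2{-2}$ is $\sig1$-positive). Through the colouring dictionary this says $\vec\xx\act\sig2 <_{\mathrm{lex}} \vec\xx\act\sig1$ while applying one further positive crossing~$\sig1$ to both sequences reverses the lexicographic comparison. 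Hence a propagating invariant must remember the braid connecting the two sequences --- in your situation a conjugate $\ww\inv\sig\jj\ww$ of a generator, as opposed to $\siginv2\sig1$ in the counterexample --- and the assertion that \emph{that} invariant survives every positive crossing is verbatim the statement $\pp \lD \sig\jj\pp$ you set out to prove; invoking ``Laver's hard contribution'' at this point is circular. Finally, your suggestion that linearity together with acyclicity of free LD-systems would suffice misplaces the difficulty: those are the ingredients of~\cite{Dfb}, already consumed in defining~$\lD$ and its colouring characterization, and they were available before~\cite{Lve}; the theorem did not follow from them, and Laver's proof requires in addition the delicate division/normal-form machinery in free LD-algebras that the paper alludes to. So what you have is a correct localization of the difficulty, not a proof.
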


In other words, whereas $\brr \lD \brr'$ does not imply $\brr\br \lD \brr'\br$ in general, $1 \lD \sig\ii$ does imply $1\br \lD \sig\ii\br$ for every braid~$\br$. Laver's proof of Theorem~\ref{T:LeftMultSig} relies on colouring braids (in the sense of Subsection~\ref{SS:Colouring}) using elements of free LD-systems and developing a fine combinatorial analysis of the latter structures by means of normal forms of their elements introduced by tricky recursive definitions---a quite delicate argument actually.

Let us say that a word~$\ww$ is a \emph{subword} of another word~$\ww'$ if $\ww'$ can be obtained from~$\ww$ by inserting letters, not necessarily in adjacent positions. Then Theorem~\ref{T:LeftMultSig} directly implies that the D-ordering has what is usually called the Subword Property: 

\begin{coro}[Laver, \cite{Lve}]
\label{C:Subword}
If $\br, \br'$ are braids and some braid word representing~$\br$ is a subword of some braid word representing~$\br$, then $\br \leD \br'$ holds.
\end{coro}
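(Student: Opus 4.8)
The plan is to deduce the corollary formally from Laver's Theorem~\ref{T:LeftMultSig}, together with the linearity and left-invariance of~$\lD$ recalled above. First I would fix braid words~$\ww$ and~$\ww'$ representing~$\br$ and~$\br'$ with~$\ww$ a subword of~$\ww'$, so that $\ww'$ is obtained from~$\ww$ by inserting finitely many letters; the substantive case, as I explain at the end, is the one in which each inserted letter is a positive generator~$\sig\jj$, and I treat that case. Since such insertions can be carried out one letter at a time, I would interpolate a chain of braid words $\ww = \ww^{(0)}, \ww^{(1)} \wdots \ww^{(\kk)} = \ww'$ in which each~$\ww^{(\ell+1)}$ arises from~$\ww^{(\ell)}$ by inserting a single generator. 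Writing $[\cdot]$ for the braid represented by a braid word, and using that~$\lD$ is transitive (being a linear order), it then suffices to establish the one-insertion step $[\ww^{(\ell)}] \lD [\ww^{(\ell+1)}]$ and chain these strict inequalities.

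For the one-insertion step I would split $\ww^{(\ell)} = \ww_1\ww_2$ at the site where the new letter is inserted, so that $\ww^{(\ell+1)} = \ww_1\sig\ii\ww_2$ for the relevant index~$\ii$. Let~$\br_1$ and~$\br_2$ be the braids represented by~$\ww_1$ and~$\ww_2$; then $[\ww^{(\ell)}] = \br_1\br_2$ and $[\ww^{(\ell+1)}] = \br_1\sig\ii\br_2$. Applying Theorem~\ref{T:LeftMultSig} to the braid~$\br_2$ gives $\br_2 \lD \sig\ii\br_2$, and left-multiplying by~$\br_1$, which is legitimate by left-invariance, yields $\br_1\br_2 \lD \br_1\sig\ii\br_2$, that is, $[\ww^{(\ell)}] \lD [\ww^{(\ell+1)}]$.

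Chaining these inequalities along $\ww^{(0)} \wdots \ww^{(\kk)}$ gives $\br \lD \br'$ whenever at least one letter is inserted ($\kk \ge 1$), while the degenerate case $\ww = \ww'$ yields $\br = \br'$; in either situation $\br \leD \br'$, as required. I expect no serious obstacle in this deduction, since all the depth is concentrated in Laver's Theorem~\ref{T:LeftMultSig} and what remains is only the bookkeeping of splitting words and iterating. The one point that must genuinely be respected is the \emph{positivity} of the inserted letters: the theorem supplies $\br \lD \sig\ii\br$ but nothing of the shape $\br \lD \siginv\ii\br$, and the latter indeed fails, already for $\br = 1$, where inserting~$\siginv1$ produces a strictly smaller braid. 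Thus it is precisely the restriction to positive generators that both powers the argument and is indispensable to it.
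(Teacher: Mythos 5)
Your proof is correct and is essentially the paper's own argument: both reduce the claim, by induction on single-letter insertions and transitivity, to the step $\br_1\br_2 \lD \br_1\sig\ii\br_2$, which is obtained by applying Theorem~\ref{T:LeftMultSig} to~$\br_2$ and then using left-invariance of~$\lD$. Your closing observation that only insertions of \emph{positive} generators~$\sig\ii$ are admissible (inserting $\siginv\ii$ genuinely fails) correctly identifies the implicit reading of ``subword'' here, which is also the only case the paper's proof treats.
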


\begin{proof}
For an induction, it is sufficient to show that the conjunction of $\br = \br_1 \br_2$ and $\br' = \br_1 \sig\ii \br_2$ implies $\br \lD \br'$. Now Theorem~\ref{T:LeftMultSig} implies $\br_2 \lD \sig\ii \br_2$, whence $\br_1\br_2 \lD \br_1\sig\ii \br_2$ since $\lD$ is invariant under left-multiplication.
\end{proof}

The Subword Property directly implies that every conjugate~$\br'$ of a positive braid, that is, every braid of the form $\brr\inv\br\brr$ with $\br \in \BP\nn$, satisfies $\br' \gD 1$, since we can write $\brr\inv\br\brr \gD \brr\inv\brr = 1$. It follows in turn that $\br \gD 1$ is true for every \emph{quasipositive} braid~$\br$, the latter being defined as a braid that can be expressed as a product of conjugates of positive braids~\cite{Ore}.

Using the Subword Property in a more tricky way, one shows the following property that involves a sort of shifted conjugacy. 

\begin{coro}\cite[Lemma~3.5]{Dgb}
Let $\sh$ be the shift endomorphism of~$\BR\infty$ that maps~$\sig\ii$ to~$\sig{\ii+1}$ for every~$\ii$. Then, for every braid~$\br$, one has $\sh(\br) \, \sig1 \gD \br$.
\end{coro}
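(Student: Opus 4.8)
The plan is to deduce the inequality from the Subword Property (Corollary~\ref{C:Subword}) by realizing the shift~$\sh$ as an inner automorphism. Fix a braid~$\br$ and an integer~$\nn$ with $\br \in \BR\nn$, and let $\ww$ be an $\nn$-strand braid word representing~$\br$, so that $\ww$ involves only the letters $\sigg1{\pm1} \wdots \sigg{\nn-1}{\pm1}$.

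First I would invoke the classical fact that, in~$\BR{\nn+1}$, the element $\delta = \sig1\sig2\pdots\sig\nn$ satisfies $\delta\,\sig\ii\,\delta\inv = \sig{\ii+1}$ for $1 \le \ii \le \nn-1$. Conjugation by~$\delta$ therefore sends each letter of~$\ww$ to its shifted copy, whence $\delta\,\br\,\delta\inv = \sh(\br)$. Since $\delta\inv\sig1 = \siginv\nn\pdots\siginv2$, we obtain the key rewriting
\[
\sh(\br)\,\sig1 = \delta\,\br\,\delta\inv\,\sig1 = \sig1\sig2\pdots\sig\nn \cdot \ww \cdot \siginv\nn\pdots\siginv2 .
\]

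The virtue of this formula is that the braid word on its right-hand side contains~$\ww$ as a subword: deleting its first $\nn$ letters $\sig1 \wdots \sig\nn$ and its last $\nn-1$ letters $\siginv\nn \wdots \siginv2$ leaves exactly~$\ww$. Thus a braid word representing~$\br$ is a subword of a braid word representing~$\sh(\br)\,\sig1$, and Corollary~\ref{C:Subword} gives $\br \leD \sh(\br)\,\sig1$. To turn this into the strict inequality claimed, I would apply the exponent-sum homomorphism $\ee\colon \BR\infty \to \ZZZZ$ sending every~$\sig\ii$ to~$1$ (well defined since the defining relations in~\eqref{E:BraidPres} are homogeneous); as $\sh$ maps generators to generators, $\ee(\sh(\br)\,\sig1) = \ee(\br)+1 \neq \ee(\br)$, so $\sh(\br)\,\sig1 \neq \br$, and $\br \leD \sh(\br)\,\sig1$ upgrades to $\br \lD \sh(\br)\,\sig1$, that is, $\sh(\br)\,\sig1 \gD \br$.

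The one step carrying all the content is the conjugation formula $\delta\,\sig\ii\,\delta\inv = \sig{\ii+1}$: it is precisely what makes the otherwise ``invisible'' shift appear as an explicit conjugation and thereby exposes a representative of~$\br$ as a subword of a representative of~$\sh(\br)\,\sig1$. I expect no real obstacle beyond this observation; the remaining points---the identity $\delta\inv\sig1 = \siginv\nn\pdots\siginv2$ and the exponent-sum argument securing strictness---are routine bookkeeping.
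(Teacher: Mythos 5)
Your conjugation formula is correct: in $\BR{\nn+1}$ the element $\delta = \sig1\sig2\pdots\sig\nn$ satisfies $\delta\,\sig\ii\,\delta\inv = \sig{\ii+1}$ for $1 \le \ii \le \nn-1$, so that indeed $\sh(\br)\,\sig1 = \delta\,\ww\,\siginv\nn\pdots\siginv2$ for any word $\ww$ representing~$\br$. The gap is in the next step: Corollary~\ref{C:Subword} does not apply to this pair of words. The Subword Property, as it is actually proved, concerns insertion of \emph{positive} letters only: its proof reduces to Theorem~\ref{T:LeftMultSig}, which gives $\br_2 \lD \sig\ii\br_2$ and hence $\br_1\br_2 \lD \br_1\sig\ii\br_2$. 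For a \emph{negative} letter the inequality runs the other way, since $\br_2 \lD \sig\ii\br_2$ is equivalent to $\siginv\ii\br_2 \lD \br_2$, so inserting $\siginv\ii$ \emph{decreases} a braid in the D-ordering. The unrestricted statement you invoke is in fact false: the word $\sig1$ is a subword of the word $\sig1\siginv1$, which represents the trivial braid, yet $\sig1 \gD 1$.

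Your word $\sig1\pdots\sig\nn\cdot\ww\cdot\siginv\nn\pdots\siginv2$ is obtained from~$\ww$ by inserting $\nn$ positive letters \emph{and} $\nn-1$ negative ones, and these two effects compete. What your decomposition legitimately yields is only: $\br \lD \delta\br$ (by Theorem~\ref{T:LeftMultSig} applied $\nn$ times), and $\sh(\br)\,\sig1 = \delta\br\,(\siginv\nn\pdots\siginv2) \lD \delta\br$ (by left-invariance, since $\siginv\nn\pdots\siginv2 \lD 1$). Both braids lie below $\delta\br$, which gives no comparison between them. This competition between the positive and negative insertions is exactly what makes the statement nontrivial: the paper emphasizes that the Subword Property must be used ``in a more tricky way,'' and refers to \cite[Lemma~3.5]{Dgb} for the actual argument, which requires a genuinely different idea (ultimately producing a $\sigma$-positive representative of $\br\inv\,\sh(\br)\,\sig1$) rather than a direct subword comparison.
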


However, the most promising consequence of Laver's result is that some fragments of the D-ordering are well-orderings, that is, every nonempty subset must have a smallest element.

\begin{coro}[Laver, \cite{Lve}]
\label{C:WellOrder}
For every~$\nn$, the restriction of the D-ordering to~$\BP\nn$ is a well-ordering.
\end{coro}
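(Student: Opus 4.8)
The plan is to prove the \emph{negative} form of the statement: I would show that $(\BP\nn, \leD)$ admits no infinite strictly descending sequence. Since $\leD$ is already known to be a linear order, this is equivalent (modulo dependent choice) to its being a well-order, because a nonempty subset with no least element would furnish such a descending sequence, and conversely. So I would assume, for contradiction, that
$$\br_1 \gD \br_2 \gD \br_3 \gD \pdots$$
is an infinite strictly descending sequence of positive braids, and aim to contradict the Subword Property of Corollary~\ref{C:Subword}.

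First I would fix representatives. By definition $\BP\nn$ is generated, as a monoid, by $\sig1 \wdots \sig{\nn-1}$, so each $\br_\ii$ is represented by a \emph{positive} braid word $\ww_\ii$, that is, a word over the finite alphabet $\{\sig1 \wdots \sig{\nn-1}\}$ (no inverse letters occur). The crucial external input is then Higman's Lemma: the scattered-subword ordering on finite words over a finite alphabet is a well-quasi-ordering. This subword ordering is exactly the one defined just before Corollary~\ref{C:Subword} (obtaining $\ww'$ from $\ww$ by inserting letters in not-necessarily-adjacent positions). Applying Higman's Lemma to the infinite sequence $(\ww_\ii)_\ii$ produces indices $\ii < \jj$ for which $\ww_\ii$ is a subword of $\ww_\jj$.

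Now the contradiction is immediate. Since $\ww_\ii$ represents $\br_\ii$, the word $\ww_\jj$ represents $\br_\jj$, and $\ww_\ii$ is a subword of $\ww_\jj$, the Subword Property gives $\br_\ii \leD \br_\jj$. On the other hand $\ii < \jj$ together with strict descent gives $\br_\ii \gD \br_\jj$. These are incompatible, so no infinite descending sequence exists, and therefore $(\BP\nn, \leD)$ is a well-order.

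The only nonroutine step is the conceptual one: recognizing that Higman's Lemma is precisely the combinatorial device that converts the Subword Property into well-foundedness; once this is seen, the argument is essentially free, the genuine depth having already been spent in Laver's Theorem~\ref{T:LeftMultSig} (from which Corollary~\ref{C:Subword} is derived). I would expect the points needing care to be merely bookkeeping: checking that the paper's notion of ``subword'' coincides with Higman's subsequence embedding (it does), tracking the direction of the inequalities so that $\ww_\ii \leq \ww_\jj$ with $\ii < \jj$ really clashes with strict descent, and noting the mild use of dependent choice. I would finally remark that this approach yields well-foundedness \emph{only}; it says nothing about the exact order type (known to be $\omega^{\omega^{\nn-2}}$), whose determination requires a finer normal-form analysis that the well-quasi-ordering argument does not provide.
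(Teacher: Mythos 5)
Your proposal is correct and is essentially the paper's own proof: both arguments derive well-foundedness from the Subword Property (Corollary~\ref{C:Subword}), itself a consequence of Theorem~\ref{T:LeftMultSig}, by applying Higman's theorem to positive word representatives of the braids in a putative descending sequence. The only difference is cosmetic: you invoke Higman's Lemma in its sequence (well-quasi-order) form, which directly produces indices $\ii < \jj$ with $\ww_\ii$ a subword of $\ww_\jj$, whereas the paper cites the ``infinite set'' form and compensates by first extracting a subsequence of words with non-decreasing lengths to get the subword relation pointing in the right direction.
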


\begin{proof}
By a celebrated result of Higman \cite{Hig}, an infinite set of words over a finite alphabet necessarily contains two elements~$\ww, \ww'$ such that $\ww$ is a subword of~$\ww'$. Let $\br_1, \br_2, ...$ be an infinite sequence of braids in~$\BP\nn$. Our aim is to prove that this sequence is not strictly decreasing. For each~$\pp$, choose a positive braid word~$w_\pp$ representing~$\br_\pp$. There are only finitely many $n$-strand braid words of a given length, so, for each~$\pp$, there exists~$\pp' > \pp$ such that $w_{\pp'}$ is at least as long as~$w_\pp$. So, inductively, we can extract a subsequence $w_{\pp_1}, w_{\pp_2}, ...$ in which the lengths are non-decreasing. If the set $\{w_{\pp_1}, w_{\pp_2}, ...\}$ is finite, there exist $\kk, \kk'$ such that $w_{\pp_\kk}$ and~$w_{\pp_{\kk'}}$ are equal, and then we have $\br_{\pp_\kk} = \br_{\pp_{\kk'}}$. Otherwise, by Higman's result, there exist $\kk, \kk'$ such that $w_{\pp_\kk}$ is a subword of~$w_{\pp_{\kk'}}$, and, by construction, we must have $\pp_\kk \le \pp_{\kk'}$. By Corollary~\ref{C:Subword}, this implies $\br_{\pp_\kk} \leD \br_{\pp_{\kk'}}$ in~$\BP\nn$. So, in any case, the sequence $\br_1, \br_2, ...$ is not strictly decreasing.
\end{proof}

The well-order property established by Laver for~$\BP\nn$ is a strong statement. As a general matter of fact, the D-ordering on~$\BR\nn$ is an intricate relation for $\nn \ge 3$: it is not Archimedean (there exist $\br, \br'$ such that $\br^\pp \lD \br'$ holds for every~$\pp$), it is not Conradian (there exist $\br, \br'$ such that $\br'\br^\pp \lD \br$ holds for every~$\pp$), it has infinite ascending and descending sequences, \textit{etc}. By contrast, Laver's result shows that forgetting about non-positive braids yields a very simple ordering, in particular one where the position of an element can be specified using just an ordinal, see Figure~\ref{F:WellOrder}.

\begin{figure}[htb]
\begin{picture}(125,20)(0,-2)
\psline{->}(5,0)(125,0)
\put(113,2.5){$(\BP3, \lD)$}
\pscircle[linewidth=.8pt, fillstyle=solid](5,0){2pt}
\put(4,2.5){$1$}
\pscircle[linewidth=.8pt, fillstyle=solid](15,0){2pt}
\put(14,2.5){$\sig2$}
\pscircle[linewidth=.8pt, fillstyle=solid](25,0){2pt}
\put(23.5,2.5){$\sigg22$\ ...}
\pscircle[linewidth=.8pt, fillstyle=solid](85,0){2pt}
\put(84,2.5){$\sig1$}
\pscircle[linewidth=.8pt, fillstyle=solid](95,0){2pt}
\put(92,2.5){$\sig1\!\sig2$}
\pscircle[linewidth=.8pt, fillstyle=solid](105,0){2pt}
\put(102,2.5){$\sig1\!\sigg22$\ ...}
\psline[linecolor=white, fillstyle=solid,fillcolor=lightgray](34,-2)(83,-2)(83,2)(34,2)(34,-2)

\put(0,10){$...$}
\psline{->}(3,10)(125,10)
\put(113,12.5){$(\BR3, \lD)$}
\pscircle[linewidth=.8pt, fillstyle=solid](5,10){2pt}
\put(4,12.5){$1$}
\pscircle[linewidth=.8pt, fillstyle=solid](15,10){2pt}
\put(14,12.5){$\sig2$}
\pscircle[linewidth=.8pt, fillstyle=solid](25,10){2pt}
\put(23.5,12.5){$\sigg22$\ ...}
\pscircle[linewidth=.8pt, fillstyle=solid](40,10){2pt}
\put(35,12.5){$\siginv2\!\sig1\!\siginv2$}
\pscircle[linewidth=.8pt, fillstyle=solid](50,10){2pt}
\put(49,13){$\downarrow$}
\put(46,17){$\siginv2\!\sig1$}
\pscircle[linewidth=.8pt, fillstyle=solid](60,10){2pt}
\put(54,12.5){$\siginv2\!\sig1\!\sig2\ ...$}
\pscircle[linewidth=.8pt, fillstyle=solid](75,10){2pt}
\put(72,12.5){$\sig1\!\siginv2$}
\pscircle[linewidth=.8pt, fillstyle=solid](85,10){2pt}
\put(84,12.5){$\sig1$}
\pscircle[linewidth=.8pt, fillstyle=solid](95,10){2pt}
\put(92,12.5){$\sig1\!\sig2$}
\pscircle[linewidth=.8pt, fillstyle=solid](105,10){2pt}
\put(102,12.5){$\sig1\!\sigg22$\ ...}
\end{picture}
\caption{\sf\smaller Restricting to positive braids changes the ordering: for instance, in $(\BP3, \lD)$, the braid~$\sig1$ is the limit of~$\sigg2\pp$, whereas, in~$(\BR3, \lD\nobreak)$, it is an isolated point with immediate predecessor~$\sig1\siginv2$; the grey part in~$\BR3$ includes infinitely many braids, such as $\siginv2\sig1$ and its neighbours---and much more---but none of them lies in~$\BP3$.}
\label{F:WellOrder}
\end{figure}
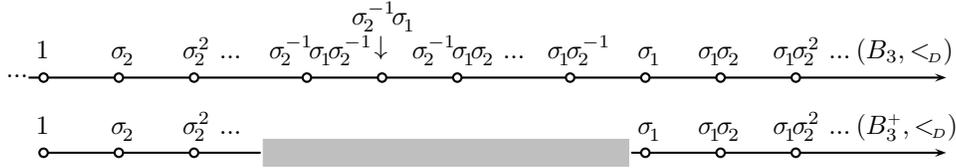

Among the standard consequences of the well-order property, we deduce

\begin{coro}
Every nonempty subset of~$\BP\nn$ is either cofinal or it has a least upper bound inside~$(\BP\nn, \lD)$.
\end{coro}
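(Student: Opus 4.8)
The plan is to deduce the statement directly from the well-order property of Corollary~\ref{C:WellOrder}, since it is a purely order-theoretic consequence once one knows that $(\BP\nn, \lD)$ is a well-ordering. Let $X$ be a nonempty subset of~$\BP\nn$. First I would introduce the set
\[
U = \{\bb \in \BP\nn : \xx \leD \bb \ \text{for every}\ \xx \in X\}
\]
of all upper bounds of~$X$ lying in~$\BP\nn$, and then distinguish two cases according to whether $U$ is empty or not.

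If $U$ is empty, I would argue that $X$ is cofinal. Indeed, saying that no element belongs to~$U$ means that for every~$\bb$ in~$\BP\nn$ there is some~$\xx$ in~$X$ with $\neg(\xx \leD \bb)$; since $\lD$ is a linear ordering, the latter is equivalent to $\bb \lD \xx$, and \emph{a fortiori} $\bb \leD \xx$. Thus every element of~$\BP\nn$ is bounded above by some element of~$X$, which is exactly the statement that $X$ is cofinal.

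If $U$ is nonempty, I would invoke Corollary~\ref{C:WellOrder}: as a nonempty subset of the well-ordered set~$(\BP\nn, \lD)$, the set~$U$ has a $\lD$-least element~$\bb_0$. By construction $\bb_0$ is an upper bound of~$X$, and, being the least element of~$U$, it satisfies $\bb_0 \leD \bb$ for every upper bound~$\bb$ of~$X$ in~$\BP\nn$; hence $\bb_0$ is the least upper bound of~$X$ inside~$(\BP\nn, \lD)$.

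There is no genuine obstacle here: the only point is to recognize that the alternative ``cofinal versus admitting a least upper bound'' corresponds exactly to the dichotomy ``the set of upper bounds is empty versus nonempty'', after which the well-order property of Corollary~\ref{C:WellOrder} does all the work. One may note in passing that the two alternatives are in fact mutually exclusive, since $(\BP\nn, \lD)$ has no largest element---by the Subword Property one has $\br \lD \sig1\br$ for every~$\br$---so a cofinal subset can admit no upper bound at all.
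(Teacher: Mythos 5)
Your proof is correct and follows essentially the same route as the paper, which also observes that unless $X$ is unbounded in~$\BP\nn$, its set of upper bounds is nonempty and hence, by the well-order property of Corollary~\ref{C:WellOrder}, admits a least element. Your additional remark that the two alternatives are mutually exclusive is a harmless (and correct) bonus not present in the paper's one-line argument.
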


Indeed, for $\XX $ included in~$\BP\nn$, unless $\XX$ is unbounded in~$\BP\nn$, the set of all upper bounds of~$\XX$ is nonempty, hence it admits a least element.

Before turning to further results, let us conclude this subsection with a conjecture of R.\,Laver that involves braids and extends his well-order result. We saw in Lemma~\ref{L:ActionBis} and Lemma~\ref{L:PartialAction} that, whenever $(\SS, \op)$ is a left-cancellative LD-system, one can define a partial Hurwitz action of~$\BR\nn$ on~$\SS^\nn$. For every sequence~$\vec\aa$ in~$\SS^\nn$, we can then consider the family
$$\DD_\SS(\vec\aa) = \{\br \in \BR\nn \mid \vec\aa \act \br \text{\ is defined\,}\}.$$ 
As the action of positive braids is always defined, we always have $\BP\nn \subseteq \DD_\SS(\vec\aa)$. In some cases~\cite{Lar}, the family~$\DD_\SS(\vec\aa)$ reduces to~$\BP\nn$ and, therefore, Corollary~\ref{C:WellOrder} says that the restriction of the D-ordering to this family $\DD_{\BR\infty}(1 \wdots 1)$ is a well-order.

\begin{conj}[Laver, private communication]
If $(\SS, \op)$ is a free LD-system, the restriction of the D-ordering to every family of the form $\DD_\SS(\vec\aa)$ is a well-order.
\end{conj}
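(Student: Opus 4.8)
The plan is to use the fact that the D-ordering is itself \emph{defined} by colourings with a free LD-system, so that the partial Hurwitz action realises $(\DD_\SS(\vec\aa), \lD)$ as an ordered subset of $(\SS^\nn, <_{\mathrm{lex}})$. Since $\SS$ is free it is left-cancellative, so Lemma~\ref{L:PartialAction} applies and, for $\br \in \DD_\SS(\vec\aa)$, the colour $\vec\aa \act \br$ is a well-defined element of~$\SS^\nn$. Fix~$\vec\aa$ and set $\Phi(\br) = \vec\aa \act \br$. The defining property of the D-ordering recalled in Subsection~\ref{SS:Colouring} gives $\br \lD \br'$ exactly when $\Phi(\br) <_{\mathrm{lex}} \Phi(\br')$ (where $<$ is the LD-order on~$\SS$), so $\Phi$ is a strictly order-preserving injection; checking that this particular~$\vec\aa$ faithfully witnesses the order is a routine consequence of freeness. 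Hence it suffices to prove that $\mathrm{Im}(\Phi)$ is well-ordered by~$<_{\mathrm{lex}}$.

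Let $R(\vec\aa) \subseteq \SS$ be the set of entries occurring in some $\vec\aa \act \br$ with $\br \in \DD_\SS(\vec\aa)$, so that $\mathrm{Im}(\Phi) \subseteq R(\vec\aa)^\nn$. A finite lexicographic power of a well-ordered set is well-ordered, and a subset of a well-order is a well-order; thus everything reduces to the single claim that \textbf{$R(\vec\aa)$ is well-ordered by the LD-order of~$\SS$}. This is precisely where the new content beyond Corollary~\ref{C:WellOrder} sits. The elements of $R(\vec\aa)$ reachable using positive crossings only form the multiplicative part, whose well-ordering is the combinatorial substance already behind Corollary~\ref{C:WellOrder}; I would re-derive it by a Higman-style well-quasi-ordering of positive words together with the Subword Property (Corollary~\ref{C:Subword}). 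The genuinely new elements are those manufactured by the partial division rule attached to negative crossings, and controlling these is the crux.

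To control these divisions I would establish the \emph{well-foundedness of left-division} in a free LD-system: for a fixed~$\vec\aa$, each entry can be divided only finitely often, so that $\vec\aa \act \Delta_\nn^{-\dd}$ becomes undefined once~$\dd$ exceeds some bound $D=D(\vec\aa)$. This confines $R(\vec\aa)$ to a finitely generated, division-closed fragment of bounded depth, on which the Higman-type argument of the positive case applies. Moreover it yields a shortcut that largely bypasses the colouring reduction: any $\br \in \DD_\SS(\vec\aa)$ uses only divisions already available along its Garside factorisation $\Delta_\nn^{-\dd}\br_0$, so $\Delta_\nn^{D}\br \in \BP\nn$ for every such~$\br$; left-multiplication by~$\Delta_\nn^{D}$ is then a $\lD$-isomorphism of $\DD_\SS(\vec\aa)$ onto a subset of~$\BP\nn$, and Corollary~\ref{C:WellOrder} concludes at once.

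The decisive obstacle is the well-foundedness of division itself. No naive term-complexity measure works: leaf-count and variable-count are not invariant under the LD-law, and the relation ``$\cc$ is obtained from~$\xx$ by one division'' need not decrease either, so the statement is not formal and is of the same combinatorial depth as the acyclicity results for free LD-systems that Laver first reached through Axiom~$\AxI3$. A second, more technical point is the word-dependence of definedness for the partial action: one must ensure that a braid admitting \emph{some} defining word also admits its Garside-normal defining word (or else bound the negative depth intrinsically from the colour $\vec\aa \act \br$), so that the reduction to powers of~$\Delta_\nn$ is legitimate. Should division turn out not to be well-founded, the image under~$\Phi$ need not be bounded below and one would be forced into a direct minimal-bad-sequence argument on mixed positive/negative colour sequences, which appears substantially harder.
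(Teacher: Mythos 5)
There is no proof in the paper to compare yours against: the statement is presented as a conjecture (Laver, private communication), and the paper explicitly says it remains open when $\vec\aa$ has length~$3$ and more. So your proposal must stand on its own, and it does not: it is a reduction, not a proof, and its pivotal claim carries the full weight of the conjecture. Concretely, everything hinges on your assertion that left-division in a free LD-system is well-founded in a strong enough, uniform sense to yield a bound $D = D(\vec\aa)$ with $\Delta_\nn^{D}\br \in \BP\nn$ for every $\br \in \DD_\SS(\vec\aa)$; granted that, your endgame is correct (left-invariance of~$\lD$ makes $\br \mapsto \Delta_\nn^{D}\br$ an order-embedding into $\BP\nn$, and Corollary~\ref{C:WellOrder} finishes). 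But this finiteness-of-division statement is exactly the unestablished content: the paper records Laver's own observation that the braid formulation is \emph{equivalent} to a formulation involving free LD-systems only, connected with~\cite{LaMi1} and~\cite{LaMi2}. You acknowledge this (``the decisive obstacle''), which is honest, but it means the proposal reformulates the conjecture rather than proving it; no term-complexity or quotable lemma is available to discharge it, as you yourself note.

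Beyond the central gap, two steps would need genuine work even if well-founded division were granted. First, to get $\DD_\SS(\vec\aa) \subseteq \Delta_\nn^{-D}\BP\nn$ you need a braid admitting \emph{some} colourable word to admit a colourable word of the form $\Delta_\nn^{-\dd}w_0$; Lemma~\ref{L:PartialAction} gives coherence of \emph{values} across equivalent colourable words, not transfer of \emph{definedness} between them, and the domain is prefix-closed only word by word, so ``uses only divisions already available along its Garside factorisation'' is an unproved closure property of the domain, not a technicality. Second, the strict order-preservation of $\Phi(\br) = \vec\aa \act \br$ for an \emph{arbitrary} tuple $\vec\aa$ (with possibly repeated or mutually dependent entries) is not a ``routine consequence of freeness'': the some/any lexicographic description of~$\lD$ recalled in Subsection~\ref{SS:Colouring} is stated loosely, and turning it into an equivalence for a fixed arbitrary $\vec\aa$ requires controlling compositions of the partial action, which Lemma~\ref{L:PartialAction} does not provide. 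Finally, your intermediate claim that the well-ordering of the positive part of $R(\vec\aa)$ is ``the combinatorial substance already behind Corollary~\ref{C:WellOrder}'' conflates a well-order on positive \emph{braids} with a well-order on a subset of the free LD-system itself; the Higman argument proving that corollary lives entirely on braid words and yields no well-ordering of colours. Your $\Delta_\nn^{D}$ shortcut bypasses this last point, but the other gaps remain, and the first of them \emph{is} the open problem.
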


The conjecture remains open when $\vec\aa$ has length~3 and more. As noted by R.\,Laver, the above braid formulation is equivalent to a formulation involving free LD-systems only, and connected with the results of~\cite{LaMi1} and~\cite{LaMi2}. Let us also mention a similar conjecture where free LD-systems are replaced with the (left-cancellative) LD-system~$(\BR\infty, \op)$ where $\op$ is the shifted conjugacy operation
\begin{equation}
\label{E:BraidLDOp}
\br \op \brr = \br \cdot \sh(\brr) \cdot \sig1 \cdot \sh(\br)\inv,
\end{equation}
with $\sh$ the endomorphism that maps~$\sig\ii$ to~$\sig{\ii+1}$ for every~$\ii$. This conjecture is also open so far.

\subsection{Further refinements}
\label{SS:Refinements}

We now report about some subsequent results, mainly by S.\,Burckel, J.\,Fromentin, and the author, that made the description of the braid well-order more precise than the original abstract argument of R.\,Laver. 

The restriction of the D-ordering to the monoid~$\BP\infty$ of positive braids on an unbounded number of strands is not a well-ordering since it contains the descending sequence $\sig1 \gD \sig2 \gD \pdots$\,. However, it is easy, and technically convenient, to reverse the role of left and right in braid diagrams and to obtain a well-ordering on~$\BP\infty$.

\begin{defi}
For $\nn \ge 2$, let $\Flip\nn$ be the automorphism of the group~$\BR\nn$ (`\emph{flip} automorphism') that maps~$\sig\ii$ to $\sig{\nn-\ii}$ for every~$\ii$. For~$\br, \br'$ in~$\BR\nn$, we write $\br \lDf \br'$ for $\Flip\nn(\br) \lD \Flip\nn(\br')$. The relation~$\lDf$ is called the \emph{flipped D-ordering} on~$\BR\nn$.
\end{defi}

It is straightforward to check that the relation~$\lDf$ is a left-invariant linear ordering on~$\BR\nn$, and that it is independent of~$\nn$ in that, for $\br, \br'$ in~$\BR\nn$, the relation~$\br \lDf \br'$ holds in~$\BR\nn$ if and only if it holds in~$\BR{\nn'}$ for any~$\nn' \ge \nn$. When compared with the D-ordering, the flipped D-ordering amounts to exchanging left and right: $\br \lDf \br'$ holds if and only if the quotient-braid $\br\inv \br'$ admits an expression in which the generator~$\sig\ii$ with \emph{largest} index occurs positively only. In particular, we have $\sig1 \lDf \sig2 \lDf \pdots$. The benefit of considering~$\lDf$ instead of~$\lD$ is to give an improved picture of the way the monoids~$\BP\nn$ embed into one another, see Figure~\ref{F:Flip}. Indeed, one shows:

\begin{prop}\cite[Proposition~II.2.10]{Dhr}
\label{P:Segment}
The restriction of the flipped D-ordering of~$\BR\infty$ to~$\BP\infty$ is a well-ordering and, for every~$\nn$, the set~$\BP\nn$ is the initial segment of $(\BP\infty, \lDf)$ determined by~$\sig\nn$, that is, we have $\BP\nn = \{\br \in \BP\infty \mid \br \lDf \sig\nn\}$.
\end{prop}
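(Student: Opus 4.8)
The plan is to reduce everything to Laver's well-ordering of $(\BP\nn, \lD)$ (Corollary~\ref{C:WellOrder}) and the Subword Property (Corollary~\ref{C:Subword}), exploiting two elementary features of the flip. First, $\Flip\nn$ permutes the generators $\sig1 \wdots \sig{\nn-1}$, so it restricts to a bijection of~$\BP\nn$ onto itself, and by definition it carries~$\lDf$ to~$\lD$; thus $\Flip\nn$ is an order-isomorphism from $(\BP\nn, \lDf)$ onto $(\BP\nn, \lD)$, and Corollary~\ref{C:WellOrder} immediately yields that $(\BP\nn, \lDf)$ is a well-ordering for each fixed~$\nn$. Second, $\Flip\nn$ acts on braid words by the position-preserving relabelling $\sigg\ii{\pm1} \mapsto \sigg{\nn-\ii}{\pm1}$, so it preserves the subword relation; hence Corollary~\ref{C:Subword} transports verbatim to~$\lDf$: whenever a braid word representing~$\br$ is a subword of one representing~$\br'$, we get $\br' \geDf \br$.

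Next I would establish the initial-segment identity $\BP\nn = \{\br \in \BP\infty \mid \br \lDf \sig\nn\}$, which carries the real content. For the inclusion $\subseteq$, take $\br \in \BP\nn$; then $\br\inv$ is represented by a word in $\siginv1 \wdots \siginv{\nn-1}$, so $\br\inv\sig\nn$ is represented by a word whose unique letter of maximal index is the final positive~$\sig\nn$. By the characterization of~$\lDf$ recalled in the text (the generator of largest index occurs positively only), this exhibits $\br\inv\sig\nn$ as flip-positive and hence gives $\br \lDf \sig\nn$ (strictly, since a flip-positive word never represents the trivial braid). For the reverse inclusion, suppose $\br \in \BP\infty$ but $\br \notin \BP\nn$; then any positive word representing~$\br$ must contain a letter~$\sig\mm$ with $\mm \ge \nn$, for otherwise $\br$ would lie in~$\BP\nn$. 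That single letter is a subword of the word, so the transported Subword Property gives $\br \geDf \sig\mm$, while $\sig1 \lDf \sig2 \lDf \pdots$ gives $\sig\mm \geDf \sig\nn$; hence $\br \geDf \sig\nn$, so $\br \lDf \sig\nn$ fails. This proves $\{\br \mid \br \lDf \sig\nn\} \subseteq \BP\nn$.

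Finally I would deduce that $(\BP\infty, \lDf)$ is a well-ordering by viewing $\BP\infty = \bigcup_\nn \BP\nn$ as an increasing union of $\lDf$-initial segments. Fix a nonempty $X \subseteq \BP\infty$; any element of~$X$ lies in some~$\BP\nn$, so $X \cap \BP\nn$ is nonempty and, by the first paragraph, has a $\lDf$-least element~$\br_0$. I claim $\br_0 = \min X$: for $\br' \in X \cap \BP\nn$ this is minimality in~$\BP\nn$, while for $\br' \in X \setminus \BP\nn$ the initial-segment identity gives $\br' \geDf \sig\nn$, and $\br_0 \in \BP\nn$ gives $\br_0 \lDf \sig\nn$, whence $\br_0 \lDf \br'$. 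Thus every nonempty subset of~$\BP\infty$ has a $\lDf$-least element, as required.

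I expect the reverse inclusion $\{\br \mid \br \lDf \sig\nn\} \subseteq \BP\nn$ to be the main obstacle: it is the only step that genuinely uses the combinatorial substance of the ordering (through the Subword Property and the chain $\sig1 \lDf \sig2 \lDf \pdots$) rather than formal bookkeeping with the flip, and it is precisely what prevents a positive braid involving a higher-index generator from slipping below~$\sig\nn$. Once Corollaries~\ref{C:WellOrder} and~\ref{C:Subword} are in hand, the remaining steps are routine.
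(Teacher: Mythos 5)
Your proof is correct, and it is worth noting that the paper itself gives no argument for this proposition: it is stated with a bare citation to \cite[Proposition~II.2.10]{Dhr}. What you have produced is a self-contained derivation from exactly the two ingredients the paper does prove or state, namely Laver's well-order result (Corollary~\ref{C:WellOrder}), the Subword Property (Corollary~\ref{C:Subword}), and the facts recorded after the definition of~$\lDf$ (that it is a linear ordering independent of the braid index, characterized by the largest-index generator occurring positively only, and that $\sig1 \lDf \sig2 \lDf \pdots$ holds). Each of your steps checks out: the flip is a generator permutation of~$\BR\nn$, hence an order isomorphism $(\BP\nn, \lDf) \cong (\BP\nn, \lD)$ and compatible with subwords; your inclusion $\BP\nn \subseteq \{\br \mid \br \lDf \sig\nn\}$ correctly exhibits a flip-positive word for $\br\inv\sig\nn$; and the reverse inclusion rests on the (immediate) observation that a positive braid outside~$\BP\nn$ has every positive representative word containing some~$\sig\mm$ with $\mm \ge \nn$ --- legitimate because the inclusion of~$\BP\nn$ into~$\BP\infty$ is precisely the submonoid generated by $\sig1 \wdots \sig{\nn-1}$. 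Your closing argument, extracting the minimum of a set $X \subseteq \BP\infty$ from the minimum of $X \cap \BP\nn$ via the initial-segment identity, is the standard and correct way to pass from well-ordered initial segments to a well-ordered union. Your self-assessment is also apt: the reverse inclusion is indeed where the substance lies, and your treatment of it is sound; the one point you use silently is the $\nn$-independence of~$\lDf$ (so that the orderings on the various~$\BP\nn$ and on~$\BP\infty$ cohere), but the paper asserts this explicitly, so nothing is missing.
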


\begin{figure}[htb]
\begin{picture}(120,25)
\psline{->}(0,20)(120,20)
\pscircle[linewidth=.8pt, fillstyle=solid](0,20){2pt}
\put(-1,22.5){$1$}
\pscircle[linewidth=.8pt, fillstyle=solid](5,20){2pt}
\put(4,22.5){$\sig1$}
\pscircle[linewidth=.8pt, fillstyle=solid](30,20){2pt}
\put(29,22.5){$\sig2$}
\pscircle[linewidth=.8pt, fillstyle=solid](60,20){2pt}
\put(59,22.5){$\sig3$}
\pscircle[linewidth=.8pt, fillstyle=solid](90,20){2pt}
\put(89,22.5){$\sig4$\quad...}
\put(106,22.5){$(\BP\infty, \lDf)$}
\put(-1,18){$\underbrace{\hbox to 29mm{\hfill}}$}
\put(-3,13){positive $2$-strand braids}
\put(-1,12){$\underbrace{\hbox to 59mm{\hfill}}$}
\put(12,7){positive $3$-strand braids}
\put(-1,6){$\underbrace{\hbox to 89mm{\hfill}}$}
\put(27,1){positive $4$-strand braids, etc.}
\end{picture}
\caption{\sf\smaller The well-ordered set~$(\BP\infty, \lDf)$: an increasing union of end-extensions, in which $\BP\nn$ is the initial segment determined by~$\sig\nn$.}
\label{F:Flip}
\end{figure}
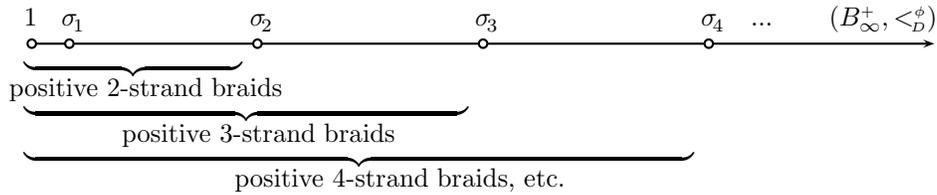

Laver's proof that the restriction of the D-ordering to positive braids is a well-ordering is indirect, and it remains ineffective in that it does not specifies the order type of $(\BP\nn, \lD)$ or $(\BP\infty, \lDf)$. These natural questions have been solved.

\begin{prop}[Burckel, \cite{Bur}]
\label{P:Burckel}
For every~$\nn$, the order type of $(\BP\nn, \lD)$ is $\omega^{\omega^{\nn-2}}$.
\end{prop}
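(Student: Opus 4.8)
The plan is to prove the statement by induction on~$\nn$, after first replacing $\lD$ by the flipped ordering $\lDf$. This is legitimate because the flip automorphism $\Flip\nn$ restricts to an order isomorphism from $(\BP\nn, \lDf)$ onto $(\BP\nn, \lD)$, so the two have the same order type. Under $\lDf$ the distinguished generator is the one of \emph{largest} index, and by Proposition~\ref{P:Segment} the submonoid $\BP{\nn-1} = \langle\sig1 \wdots \sig{\nn-2}\rangle^{\!+}$ is exactly the initial segment of $(\BP\nn, \lDf)$ lying below~$\sig{\nn-1}$. We already know from Corollary~\ref{C:WellOrder} that $(\BP\nn, \lDf)$ is a well-ordering, hence has a well-defined order type; the whole point is to identify it.

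First I would introduce a canonical normal form for positive braids, namely Burckel's normal form. The naive idea of recording, for $\beta \in \BP\nn$, a decomposition $\beta = \gamma_0 \sig{\nn-1} \gamma_1 \sig{\nn-1} \pdots \sig{\nn-1}\gamma_\kk$ with blocks $\gamma_\ii \in \BP{\nn-1}$ fails outright, since the braid relation $\sig\ii\sig{\ii+1}\sig\ii = \sig{\ii+1}\sig\ii\sig{\ii+1}$ changes the number of occurrences of a generator, so neither $\kk$ nor the blocks are invariants of~$\beta$. The normal form fixes instead a distinguished representative, defined recursively on the strand index, and the key structural fact I would establish is that it identifies $\BP\nn$ with a set of finite sequences whose entries are normal forms of elements of $\BP{\nn-1}$, the number of entries now being a genuine invariant of~$\beta$ (its ``$\sig{\nn-1}$-breadth'').

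The heart of the argument is then to show that, through this normal form, $\lDf$ coincides with the $\SL$ ordering: $\beta \lDf \beta'$ holds if and only if the normal form of~$\beta$ precedes that of~$\beta'$ when sequences are compared first by length (breadth) and then lexicographically, the entries being compared recursively by the $\BP{\nn-1}$-ordering. The Subword Property (Corollary~\ref{C:Subword}) and Theorem~\ref{T:LeftMultSig} supply the monotonicity needed for one direction and control the effect of appending blocks; the converse, that the $\SL$-ordering of normal forms is no coarser than $\lDf$, is where the precise recursive definition of the normal form must be exploited, together with the induction on~$\nn$. I expect this compatibility statement to be the main obstacle, as it is exactly the point at which the intricate combinatorics of the normal form and the nonlocal character of $\lDf$ have to be reconciled; the admissibility conditions built into the normal form are what make the identification with sequences a bijection.

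Granting the $\SL$-description, the computation of the order type becomes pure ordinal arithmetic. For $\nn = 2$ the monoid $\BP2 = \{\sigg1\kk : \kk \ge 0\}$ is order-isomorphic to $(\NNNN, <)$, of type $\omega = \omega^{\omega^0}$, which settles the base case. For the inductive step, $(\BP\nn, \lDf)$ is identified with finite sequences of elements of $(\BP{\nn-1}, \lDf)$ ordered length-first then lexicographically; writing $\alpha = \mathrm{ord}(\BP{\nn-1})$, the sequences of a fixed length~$\kk$ form in this ordering a copy of the ordinal power $\alpha^\kk$, and ordering the whole family by increasing length yields $\sum_{\kk < \omega}\alpha^\kk = \alpha^\omega$ (the partial sums being cofinal in, and bounded by, $\alpha^\omega$ since $\alpha$ is infinite). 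Hence $\mathrm{ord}(\BP\nn) = \mathrm{ord}(\BP{\nn-1})^\omega$, and with $\mathrm{ord}(\BP{\nn-1}) = \omega^{\omega^{\nn-3}}$ from the induction hypothesis we obtain $\mathrm{ord}(\BP\nn) = (\omega^{\omega^{\nn-3}})^\omega = \omega^{\omega^{\nn-3}\cdot\omega} = \omega^{\omega^{\nn-2}}$, as claimed.
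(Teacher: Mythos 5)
Your strategy is, in outline, the recognized one: not Burckel's original tree argument \cite{Bur}, but the later simplification that the paper itself describes, namely combining the $\Flip\nn$-splitting of Proposition~\ref{P:Splitting} (from~\cite{Dho}) with ordinal arithmetic. (The paper states Proposition~\ref{P:Burckel} as a cited result and gives no proof, so this described route is the natural benchmark.) The trouble is that your proposal defers precisely the step that constitutes the theorem. The claim that, under the recursive normal form, $\lDf$ on~$\BP\nn$ coincides with the $\SL$-extension of $\lDf$ on~$\BP{\nn-1}$ \emph{is} Proposition~\ref{P:Splitting}; Theorem~\ref{T:LeftMultSig} and Corollary~\ref{C:Subword} give left-invariance and the Subword Property, but these come nowhere near implying the $\SL$-compatibility, whose proof (in \cite{Bur} as in \cite{Dho}) is an intricate induction requiring the normal form to be constructed and analyzed in detail. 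Writing that you "expect this compatibility statement to be the main obstacle" flags the gap honestly, but it leaves the proposal a plan rather than a proof.

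There is a second, independent gap in the final ordinal computation. The normal-form map identifies $\BP\nn$ with a \emph{proper} subset of the set of all finite sequences over~$\BP{\nn-1}$---the admissible sequences, as you yourself note---so the braids of breadth~$\kk$ form a subset of a well-order of type $\alpha^\kk$, hence have order type \emph{at most} $\alpha^\kk$, and your computation $\sum_{\kk<\omega}\alpha^\kk=\alpha^\omega$ yields only the upper bound $\mathrm{ord}(\BP\nn,\lDf)\le\alpha^\omega$. The matching lower bound requires showing that admissible sequences are plentiful: for instance, one could try a coordinatewise padding $(\br_\kk\wdots\br_1)\mapsto(\dd\br_\kk\wdots\dd\br_1)$ by a fixed $\dd\in\BP{\nn-1}$ chosen so that every padded sequence is admissible; such a map is order-preserving because $\lDf$ is left-invariant, but the existence of such a~$\dd$ depends on the precise admissibility conditions, which your proposal never states. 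The phenomenon is already visible for $\nn=3$ in Proposition~\ref{P:Rank3}: the admissible exponent sequences are exactly those with $\ee_\rr\ge\eps_\rr$, and it is the explicit rank formula~\eqref{E:Rank3}, not a raw count of sequences, that certifies the order type~$\omega^\omega$. Until both the $\SL$-compatibility and this lower-bound argument are supplied, the order type could, for all the proposal shows, be strictly smaller than $\omega^{\omega^{\nn-2}}$.
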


Proposition~\ref{P:Burckel} also implies that the order type of $(\BP\nn, \lDf)$ is $\omega^{\omega^{\nn-2}}$ and, therefore, that of $(\BP\infty, \lD)$ is $\omega^{\omega^\omega}$, the upper bound of $\omega^{\omega^{\nn-2}}$ when $\nn$ goes to infinity.

Burckel's result relies on an intricate inductive argument, which assigns to every $\nn$-strand braid word a finite rooted tree with uniform height~$\nn - 2$, so that, for $\br, \br'$ in~$\BP\nn$, the relation $\br \lD \br'$ holds if and only if the $\SL$-minimal tree representing~$\br$ is $\SL$-smaller than the $\SL$-minimal tree representing~$\br'$, where a height~$\mm$ rooted tree is considered to be a finite sequence of height~$\mm-1$ rooted trees, and a height~$\mm$ rooted tree $(\TT_1 \wdots \TT_\ell)$ is declared $\SL$-smaller than another height~$\mm$ rooted tree~$\TT' = (\TT'_1 \wdots \TT'_{\ell'})$, if $\ell < \ell'$ holds, or if $\ell = \ell'$ holds and there exists~$\ii$ such that $\TT_\jj = \TT'_\jj$ holds for~$\jj < \ii$ and $\TT_\ii$ is $\SL$-smaller than~$\TT'_\ii$. 

In Burckel's approach, the $\SL$-minimal tree representing a braid~$\br$ appears as the terminal point of a recursive reduction process and it is not easily determined. The situation was made simpler when the simple connection between~$(\BP\nn, \lD)$ and~$(\BP{\nn-1}, \lD)$ stated in Proposition~\ref{P:Splitting} below was found, leading to considering the ordering of~$\BP\nn$ as an iterated extension of the ordering of~$\BP2$, that is, of the standard ordering of natural numbers.

If $\br, \br'$ are positive braids, one says that $\br$ \emph{right-divides}~$\br'$ if there exists a positive braid~$\brr$ satisfying $\br' = \brr \br$. Garside's theory of braids \cite{Gar} implies that every braid in~$\BR\nn$ admits a unique maximal right-divisor lying in~$\BR{\nn-1}$, namely the least common left-multiple of all right-divisors of~$\br$ lying in~$\BR{\nn-1}$. Iterating the result, one obtains a decomposition of every positive $\nn$-strand braid in terms of a sequence of positive $(\nn-1)$-strand braids. The result is then that, in terms of such decompositions, the flipped D-ordering on~$\BP\nn$ is the $\SL$-extension of the flipped D-ordering on~$\BP{\nn-1}$.

\begin{prop}\cite{Dho}
\label{P:Splitting}
For $\nn \ge 3$ and $\br$ in~$\BP\nn$, define the $\Flip\nn$-splitting of~$\br$ to the (unique) sequence $(\br_\pp \wdots \br_1)$ in~$\BP{\nn-1}$ such that, for each~$\rr$, the braid~$\br_\rr$ is the maximal right-divisor of $\brr_{\rr-1}$ that lies in~$\BP{\nn-1}$, where $\brr_\rr$ is inductively defined by $\br = \brr_\rr \br_\rr$ starting from $\br_0 = 1$. Then, for $\br, \br'$ in~$\BP\nn$ with $\Flip\nn$-splittings $(\br_\pp \wdots \br_1)$ and $(\br'_{\pp'} \wdots \br'_1)$, the relation $\br \lDf \br'$ holds if and only if $(\br_\pp \wdots \br_1)$ is smaller than $(\br'_{\pp'} \wdots \br'_1)$ for the $\SL$-extension of~$(\BP{\nn-1}, \lDf)$.
\end{prop}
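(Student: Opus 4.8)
The plan is to view the $\Flip\nn$-splitting as an injection of $\BP\nn$ into the finite sequences over $\BP{\nn-1}$ and to show it carries $\lDf$ exactly onto the $\SL$-extension of $(\BP{\nn-1},\lDf)$. Before comparing orderings I would pin down the combinatorics of the splitting itself. Garside's theory ensures that every positive braid has a unique maximal right-divisor lying in the parabolic submonoid $\BP{\nn-1} = \langle \sig1 \wdots \sig{\nn-2}\rangle^+$; after stripping it off, the remaining braid, if nontrivial, has its only possible right-descent equal to $\sig{\nn-1}$, so removing one factor $\sig{\nn-1}$ lowers the $\sig{\nn-1}$-exponent by one. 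Hence the process terminates, the sequence $(\br_\pp \wdots \br_1)$ is well defined, and it corresponds to a distinguished expression $\br = \br_\pp\,\sig{\nn-1}\,\br_{\pp-1}\cdots\sig{\nn-1}\,\br_1$ with each $\br_\rr$ in $\BP{\nn-1}$. Two features matter. First, $\br$ is recovered from its splitting, so the splitting is injective; since $\lDf$ and $\SL$ are linear, it will then suffice to prove a single implication. Second, because the factors are peeled from the right, maximality of a parabolic right-divisor transfers to every right-part: the tail of the splitting of $\br$ equals the full splitting of the right-part $\uu := \siginv{\nn-1}\br_\pp\inv\br$. This locality is what will make an induction on $\pp$ possible.

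The $\SL$-comparison first weighs the lengths $\pp$ and $\pp'$, so the first substantive step is to show that the splitting length, equivalently the $\sig{\nn-1}$-exponent, is the dominant $\lDf$-invariant: $\pp<\pp'$ forces $\br\lDf\br'$. I would prove that, for each $\ell$, the set of braids whose splitting has length at most $\ell$ is an initial segment of $(\BP\nn,\lDf)$. For $\ell=1$ this is exactly Proposition~\ref{P:Segment} (applied one strand lower), which identifies $\BP{\nn-1}$ as the segment determined by~$\sig{\nn-1}$. For general $\ell$ I would identify the $\ell$th segment as the one determined by $\sig{\nn-1}^{\,\ell}$, using that this pure power is the $\lDf$-least braid of its stratum and invoking the Subword Property (Corollary~\ref{C:Subword}) to compare an arbitrary braid of small $\sig{\nn-1}$-exponent against it. This stratification immediately yields the length clause of the $\SL$-order.

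Once $\pp=\pp'$, the $\SL$-order is lexicographic from the most significant block $\br_\pp$, which sits at the \emph{left} of the distinguished word, precisely where $\lDf$ is well behaved because it is left-invariant. If $\br_\pp=\br'_\pp$, I would left-cancel the common prefix $\br_\pp\sig{\nn-1}$: by left-invariance $\br\lDf\br'$ is equivalent to $\uu\lDf\uu'$ for the right-parts, whose splittings are the tails $(\br_{\pp-1}\wdots\br_1)$ and $(\br'_{\pp-1}\wdots\br'_1)$ by the locality noted above, so an induction on $\pp$ closes this case. The heart of the matter is the remaining case $\br_\pp\ne\br'_\pp$: one must show that $\br\lDf\br'$ holds if and only if $\br_\pp\lDf\br'_\pp$ in $\BP{\nn-1}$, \emph{regardless of the lower blocks}.

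I expect this last step to be the main obstacle. Writing $\br=\br_\pp\sig{\nn-1}\uu$ and $\br'=\br'_\pp\sig{\nn-1}\uu'$, one must analyse the quotient $\br\inv\br'=\uu\inv\siginv{\nn-1}(\br_\pp\inv\br'_\pp)\sig{\nn-1}\uu'$ and exhibit a word for it in which $\sig{\nn-1}$ occurs positively only, the defining feature of $\lDf$. The difficulty is that $\sig{\nn-1}$ commutes with $\sig1 \wdots \sig{\nn-3}$ but not with $\sig{\nn-2}$, so $\siginv{\nn-1}(\br_\pp\inv\br'_\pp)\sig{\nn-1}$ need not lie in $\BR{\nn-1}$; taming this conjugation is the real combinatorial core. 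I would handle it by bounding the two strata $\{\br_\pp\sig{\nn-1}\uu\}$ and $\{\br'_\pp\sig{\nn-1}\uu'\}$ against one another—showing, via left-invariance and the stratification of the previous paragraph applied one strand lower, that the supremum of the first lies $\lDf$-below the infimum of the second whenever $\br_\pp\lDf\br'_\pp$—together with the fact that $\BR{\nn-1}$ is an initial segment inside $\BR\nn$. Combining the three cases gives one implication, and injectivity of the splitting together with linearity of both orders then upgrades it to the desired equivalence.
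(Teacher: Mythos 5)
Your argument breaks down at the very first step: the decomposition you analyze is not the $\Flip\nn$-splitting. You read the splitting as the expression $\br = \br_\pp\,\sig{\nn-1}\,\br_{\pp-1}\cdots\sig{\nn-1}\,\br_1$ obtained by alternately stripping off the maximal right-divisor lying in~$\BP{\nn-1}$ and a single letter~$\sig{\nn-1}$. The actual definition alternates applications of the flip automorphism: after removing the maximal right-divisor $\br_1$ lying in~$\BP{\nn-1}$, one applies $\Flip\nn$ to the remainder and again extracts its maximal right-divisor lying in~$\BP{\nn-1}$, and so on, so that $\br = \Flip\nn^{\pp-1}(\br_\pp)\cdots\Flip\nn(\br_2)\,\br_1$, as recorded in~\eqref{E:BOSplitting}; this is precisely why the associated word expression is called the \emph{alternating} normal form. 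The two decompositions genuinely differ: for every~$\kk$ the braid $\sigg{\nn-1}{\kk}$ has $\Flip\nn$-splitting $(\sigg1{\kk},1)$, of length~$2$, whereas in your decomposition it has length~$\kk+1$.

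The discrepancy is fatal, because the proposition is false for your decomposition, so no proof along your lines can go through. Take $\nn=3$, $\br = \sigg22$, $\br' = \sig1\sig2$. Your splittings are $(1,1,1)$, of length~$3$, and $(\sig1,1)$, of length~$2$, so your $\SL$-comparison places $\sig1\sig2$ strictly below~$\sigg22$; but in $(\BP3,\lDf)$ one has $\sigg22 \lDf \sig1\sig2$, since by Proposition~\ref{P:Rank3} (see also Table~\ref{T:Mu}) their ranks are $\omega\cdot2$ and~$\omega^2$ respectively. The same example refutes your first substantive step, namely the claim that the braids whose splitting has length at most~$\ell$ form the initial segment of $(\BP\nn,\lDf)$ determined by~$\sigg{\nn-1}{\ell}$: here $\sig1\sig2$ has length~$2$ in your sense, yet it is not $\lDf$-below~$\sigg22$. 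The underlying point you miss is that the number of $\sig{\nn-1}$-letters is not the dominant $\lDf$-invariant---it is not even well defined on braids, since $\sig1\sig2\sig1 = \sig2\sig1\sig2$ changes it---whereas the number of alternating blocks is. Finally, even granting the correct decomposition, the case you yourself identify as the heart of the matter, $\br_\pp \neq \br'_\pp$, is only a plan (``bounding the two strata against one another''), not a proof; that step is exactly where the substance of~\cite{Dho} lies, and the present paper does not reprove it but cites~\cite{Dho} for it.
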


Saying that $(\br_\pp \wdots \br_1)$ is the $\Flip\nn$-splitting of a braid~$\br$ means that one has
\begin{equation}
\label{E:BOSplitting}
\br = \Flip\nn^{\pp-1}(\br_\pp) \cdot ... \cdot \Flip\nn(\br_2) \cdot \br_1,
\end{equation}
and $\sig1$ is the only generator~$\sig\ii$ that right-divides $\Flip\nn^{\nn-\rr}(\br_\pp) \pdots \Flip\nn(\br_{\rr+1}) \, \br_\rr$ for each~$\rr$. By iterating the decomposition process, one eventually obtains for every positive braid~$\br$ an expression in terms of shifted powers of~$\sig1$, that is, a distinguished expression by a braid word, called the \emph{alternating normal form} of~$\br$. 

As the right-divisibility relation of braids can be tested in linear time, the $\Flip\nn$-splitting of a positive braid can be computed in quadratic time and Proposition~\ref{P:Splitting} implies that, for every~$\nn$, the orderings~$\lDf$ and $\lD$ of~$\BR\nn$ can be recognized in quadratic time.

One of the nice consequences of Laver's well-ordering result is that every positive braid can be characterized by a unique parameter, namely the ordinal that describes its rank in the well-order~$(\BP\infty, \lDf)$: for instance, the rank of the trivial braid~$1$ is~$0$, that of~$\sig1$, the immediate successor of~$1$, is~$1$ and, for $\ii \ge 2$, the rank of~$\sig\ii$ is the length of the initial segment determined by~$\sig\ii$, which is~$\BP\ii$ by Proposition~\ref{P:Segment}, hence this rank is $\omega^{\omega^{\ii-1}}$ by Proposition~\ref{P:Burckel}. It is then natural to ask for a complete explicit description of the rank function. The latter is \emph{not} an algebraic homomorphism with respect to the ordinal sum: in general, the rank of~$\br_1\br_2$ is not the sum of the ranks of~$\br_1$ and~$\br_2$. This happens to be true when $\br_2$ is $\sig1$, which has rank~$1$ but, for instance, the rank of~$\sig1\sig2$ turns out to be~$\omega^2$, which is not $1 + \omega$ (that is, $\omega$) although the rank of~$\sig2$ is~$\omega$. The problem essentially amounts to recognizing which braid words are alternating normal; in the case of $3$-strand braids, the answer is simple:

\begin{prop}\cite[Proposition 6.7]{Dho} 
\label{P:Rank3}
Put $\eps_1 = 0$, $\eps_2 = 1$, and $\eps_\rr = 2$ for $\rr \ge 3$. Then every braid in~$\BP3$ admits a unique expression $\sigg{\smash{\mathrm{parity}(\pp)}}{\ee_\pp} \pdots \sigg2{\ee_2} \sigg1{\ee_1}$ with $\ee_\pp \ge 1$, and $\ee_\rr \ge \eps_\rr$ for~$\rr < \pp$; its rank in $(\BP3, \lDf)$ is then
\begin{equation}
\label{E:Rank3}
\omega^{\pp-1} \cdot \ee_\pp + \sum_{\pp > \rr\ge1} \omega^{\rr-1} \cdot (\ee_\rr - \eps_\rr).
\end{equation}
\end{prop}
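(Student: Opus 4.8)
The plan is to derive the normal form from the splitting result of Proposition~\ref{P:Splitting} applied with $\nn = 3$, and then to read off the rank from the resulting $\SL$-extension structure by a direct ordinal computation. First I would specialize Proposition~\ref{P:Splitting} to $\nn = 3$. Here $\BP2$ is the free monoid generated by~$\sig1$, so $(\BP2, \lDf)$ is order-isomorphic to $(\NNNN, \le)$ via $\sig1^\kk \mapsto \kk$, of order type~$\omega$. Consequently the $\Flip3$-splitting of a braid $\br \in \BP3$ is a sequence $(\sig1^{\ee_\pp} \wdots \sig1^{\ee_1})$, and \eqref{E:BOSplitting} becomes $\br = \Flip3^{\pp-1}(\sig1^{\ee_\pp}) \cdots \Flip3(\sig1^{\ee_2}) \, \sig1^{\ee_1}$. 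Since $\Flip3$ exchanges~$\sig1$ and~$\sig2$ and is an involution, this is exactly the alternating expression $\sigg{\mathrm{parity}(\pp)}{\ee_\pp} \pdots \sigg2{\ee_2} \sigg1{\ee_1}$, and its existence and uniqueness are precisely the existence and uniqueness of the $\Flip3$-splitting, the trivial braid corresponding to the empty splitting.

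Next I would match the admissibility constraints $\ee_\pp \ge 1$ and $\ee_\rr \ge \eps_\rr$ for $\rr < \pp$ with the defining divisibility conditions of the splitting, namely that $\sig1$ be the only generator right-dividing each accumulated factor. The boundary cases are immediate: $\ee_\pp \ge 1$ merely says the leading factor is nontrivial, and $\ee_1 \ge 0 = \eps_1$ is vacuous. The genuinely delicate point, which I expect to be the main obstacle, is to show that an interior exponent at position $\rr \ge 3$ must be at least~$2$ while $\ee_2 \ge 1$ suffices at position~$2$. This is where the braid relation $\sig1\sig2\sig1 = \sig2\sig1\sig2$ enters: a single interior power of a generator can be absorbed by a ``carry'' coming from this relation, so that a block of length~$1$ in the interior never survives as a maximal right-divisor whereas a block of length~$2$ does. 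Pinning this down is the combinatorial heart underlying Proposition~\ref{P:Splitting} and is carried out in the Garside-theoretic analysis of~\cite{Dho}; in particular it shows that, for fixed~$\pp$, the admissible tuples are exactly the product of the coordinate-wise intervals $\ee_\pp \ge 1$, $\ee_\rr \ge \eps_\rr$, with no cross-coordinate constraints.

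Finally, granting the normal form, the rank formula is a transfinite computation inside the $\SL$-extension of $(\BP2, \lDf) \cong \omega$. I would partition $\BP3$ into blocks indexed by the number of factors~$\pp$. Because the $\SL$-ordering compares sequences first by length, every braid with fewer factors precedes every braid of block~$\pp$; and within block~$\pp$, after subtracting the lower bounds $1, \eps_{\pp-1} \wdots \eps_1$ from the coordinates, the admissible tuples form, under lexicographic comparison with $\ee_\pp$ most significant, an order-isomorphic copy of the lexicographic product of $\pp$ copies of~$\omega$, hence of order type~$\omega^\pp$. The lexicographic rank of $(\ee_\pp \wdots \ee_1)$ inside this block is therefore $\sum_{\rr=1}^{\pp} \omega^{\rr-1}(\ee_\rr - l_\rr)$, with $l_\pp = 1$ and $l_\rr = \eps_\rr$ for $\rr < \pp$. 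The cumulative order type of all shorter blocks is $\omega^1 + \omega^2 + \pdots + \omega^{\pp-1} = \omega^{\pp-1}$ by left-absorption of ordinals, so block~$\pp$ begins at~$\omega^{\pp-1}$.

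Adding this offset on the left and using the key identity $\omega^{\pp-1} + \omega^{\pp-1}(\ee_\pp - 1) = \omega^{\pp-1}\ee_\pp$ collapses the leading term and yields exactly $\omega^{\pp-1}\ee_\pp + \sum_{\pp > \rr \ge 1}\omega^{\rr-1}(\ee_\rr - \eps_\rr)$, which is~\eqref{E:Rank3}. The only care needed is that the left-addition of~$\omega^{\pp-1}$ is absorbed cleanly because the remaining terms $\sum_{\rr<\pp}\omega^{\rr-1}(\ee_\rr - \eps_\rr)$ are $< \omega^{\pp-1}$; this is automatic from the Cantor normal form. Thus the entire difficulty is concentrated in establishing the normal form with the precise values~$\eps_\rr$, while the rank computation is routine ordinal bookkeeping once the $\SL$-block structure is in place.
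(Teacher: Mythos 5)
Your plan is the one the paper intends: the paper contains no proof of this proposition (it is quoted from \cite[Proposition 6.7]{Dho}), and the surrounding text presents it precisely as the $\nn=3$ instance of Proposition~\ref{P:Splitting} together with an ordinal computation. Your specialization is correct (splitting entries are powers of~$\sig1$, the flips produce the alternation, the trivial braid corresponds to the empty splitting), and so is the ordinal bookkeeping: under the $\SL$-comparison (length first, then lexicographically with the leftmost entry most significant), the admissible length-$\pp$ tuples form a copy of~$\omega^\pp$, the shorter blocks contribute $1+\omega+\pdots+\omega^{\pp-1}=\omega^{\pp-1}$, and $\omega^{\pp-1}+\omega^{\pp-1}\cdot(\ee_\pp-1)=\omega^{\pp-1}\cdot\ee_\pp$ yields exactly~\eqref{E:Rank3}; this reproduces the paper's sample values $\sig2\mapsto\omega$, $\sig1\sig2\mapsto\omega^2$, $\Delta_3\mapsto\omega^2+1$.

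The gap is the step you flag but then defer: the equivalence between ``$(\sigg1{\ee_\pp}\wdots\sigg1{\ee_1})$ is the $\Flip3$-splitting of~$\br$'' and the coordinate-wise inequalities $\ee_\pp\ge1$, $\ee_\rr\ge\eps_\rr$ for $\rr<\pp$, with the precise thresholds $\eps_2=1$ and $\eps_\rr=2$ for $\rr\ge3$. This cannot be extracted from Proposition~\ref{P:Splitting}, which is stated for every~$\nn$ and contains no such thresholds; it is additional, $3$-strand-specific content, and is in fact the substance of \cite[Proposition 6.7]{Dho} itself rather than a by-product of the splitting theorem. Two statements need proof here. Necessity of $\eps_\rr=2$: if an interior block at position $\rr\ge3$ has size~$1$, iterating $\sig2\sig1\sig2=\sig1\sig2\sig1$ gives $\sigg2{\aa}\,\sig1\,\sigg2{\cc}=\sigg2{\aa-1}\,\sigg1{\cc}\,\sig2\sig1$, so the block extracted two positions to its right was not maximal --- this is the half your carry remark covers. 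Sufficiency: if every interior block at a position $\ge3$ has size $\ge2$ (and $\ee_2\ge1$), then at each stage the remaining prefix is \emph{not} right-divisible by the forbidden generator, so the greedy extraction returns exactly the given exponents; this is the half you merely assert (your ``no cross-coordinate constraints''), and it requires a genuine divisibility analysis in~$\BP3$, via Garside normal forms or a study of which positive words admit a representative ending in~$\sig1$. Since the paper itself only cites \cite{Dho} at this point, your proposal is no less complete than the paper's own presentation; but as a standalone proof, the normal-form half of the proposition --- on which your rank computation rests --- remains unestablished.
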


For instance, the alternating normal form of Garside's fundamental braid~$\Delta_3$ is $\sig1 \sig2 \sig1$, as the latter word satisfies the defining inequalities of Proposition~\ref{P:Rank3}, contrary to $\sig2 \sig1 \sig2$, that is, $\sigg21 \sigg11 \sigg21 \sigg10$, in which the third exponent from the right, namely~$1$, is smaller than the minimal legal value~$\eps_3 = 2$. So, in this case, the sequence $(\ee_\pp \wdots \ee_1)$ is $(1, 1, 1)$, and, applying~\eqref{E:Rank3}, we deduce that the rank of~$\Delta_3$ in $(\BP3, \lDf)$---hence in $(\BP\infty, \lDf)$ as well---is $\omega^2+1$.

In the general case, only partial results are known: for instance, it is shown in~\cite{Dho} that the family of all alternating normal $\nn$-strand braid words is recognized by a finite state automaton and, in~\cite{BurR}, S.\,Burckel describes a recursive procedure for determining the rank in~$\BP\nn$.

We conclude with extensions of the previous results involving other submonoids of the braid groups. It turns out that the argument used to establish that the restriction of the D-ordering to the monoid~$\BP\nn$ is a well-ordering works for other submonoids:

\begin{coro}
\label{C:WellOrderBis}
Assume that $\MM$ is a submonoid of~$\BR\infty$ that is generated by finitely many elements, each of which is a conjugate of some generator~$\sig\ii$. Then the restriction of the D-ordering  to~$\MM$ is a well-order.
\end{coro}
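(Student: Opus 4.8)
The plan is to imitate Laver's proof of Corollary~\ref{C:WellOrder}, but with the finite generating set of~$\MM$ playing the role of the Artin alphabet and with a suitably generalized Subword Property in place of Corollary~\ref{C:Subword}. Write the given generators as $g_1 \wdots g_s$, say $g_\jj = \brr_\jj\inv \sig{\ii_\jj} \brr_\jj$ with $\brr_\jj$ in~$\BR\infty$. Since the D-ordering of~$\BR\infty$ is a linear order, it suffices to show that $\MM$ contains no infinite strictly $\lD$-descending sequence; equivalently, that every infinite sequence $\br_1, \br_2, \dots$ of elements of~$\MM$ fails to be strictly decreasing.

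The key step is a version of the Subword Property adapted to the alphabet $\{g_1 \wdots g_s\}$: if a word~$\ww$ over this alphabet is a subword of another such word~$\ww'$ (obtained by inserting letters, as in Corollary~\ref{C:Subword}), then the braids they represent satisfy~$\leD$. By induction it is enough to treat a single insertion, that is, to prove $\br_1 \br_2 \leD \br_1 g_\jj \br_2$ for all $\br_1, \br_2$ in~$\BR\infty$. By left-invariance this reduces to $\br_2 \leD g_\jj \br_2$, and then, multiplying on the left by~$\br_2\inv$, to $1 \leD \br_2\inv g_\jj \br_2$. But $\br_2\inv g_\jj \br_2 = (\brr_\jj \br_2)\inv \sig{\ii_\jj} (\brr_\jj\br_2)$ is a conjugate of the generator~$\sig{\ii_\jj}$, hence a conjugate of a nontrivial positive braid, so it is $\gD 1$ by the remark following Corollary~\ref{C:Subword}. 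This establishes the generalized Subword Property (in fact with a strict~$\lD$ at each insertion).

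With this in hand, the argument of Corollary~\ref{C:WellOrder} goes through over the new \emph{finite} alphabet. Given an infinite sequence $\br_1, \br_2, \dots$ in~$\MM$, choose for each~$\pp$ a word~$\ww_\pp$ over $\{g_1 \wdots g_s\}$ representing~$\br_\pp$. By Higman's theorem, the subword ordering on words over a finite alphabet is a well-quasi-ordering, so there exist indices $\kk < \kk'$ with $\ww_\kk$ a subword of~$\ww_{\kk'}$; by the generalized Subword Property this gives $\br_\kk \leD \br_{\kk'}$ with $\kk < \kk'$, so the sequence is not strictly decreasing. (If one prefers the exact wording used for Corollary~\ref{C:WellOrder}, one first extracts a subsequence of non-decreasing word-lengths and then applies Higman's result to the resulting infinite set, the length control guaranteeing that the subword relation is compatible with the order of the indices.) Hence $(\MM, \lD)$ has no infinite descending sequence, and, being a restriction of a linear order, it is a well-order.

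The only genuinely new ingredient is the generalized Subword Property, and getting that single reduction right is the main point; everything else is a transcription of Laver's proof with the Artin alphabet replaced by the finite system $\{g_1 \wdots g_s\}$. I expect the only remaining subtlety to be bookkeeping—matching the subword relation between representative words with the correct order of the sequence indices—which is exactly what the well-quasi-ordering form of Higman's theorem (or the length-extraction of Corollary~\ref{C:WellOrder}) handles.
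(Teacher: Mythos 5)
Your proposal is correct and follows essentially the same route as the paper: establish the Subword Property over the finite alphabet $\{g_1 \wdots g_s\}$ by reducing to $\br \lD g_\jj \br$, then conclude via Higman's lemma exactly as in Corollary~\ref{C:WellOrder}. The only cosmetic difference is that you derive $\br \lD g_\jj\br$ from the quasipositivity remark (conjugates of positive braids are $\gD 1$), while the paper invokes Theorem~\ref{T:LeftMultSig} directly---but that remark is itself an immediate consequence of the same theorem, so the logical content is identical.
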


\begin{proof}
The argument is the same as for Corollary~\ref{C:WellOrder}: assuming that $\MM$ is generated by~$\br_1 \wdots \br_\pp$, it suffices to show that the Subword Property is valid for the words in the alphabet $\{\br_1 \wdots \br_\pp\}$ and, for this, it is enough to show that $\br \lD \br_\kk \br$ holds for every positive braid~$\br$. Now, assuming $\br_\kk = \brr\inv \sig\ii \brr$, Theorem~\ref{T:LeftMultSig} gives $\brr \br \lD \sig\ii\brr \br$, whence $\br \lD \brr\inv\sig\ii\brr \br$, for every~$\br$.
\end{proof}

The hypothesis that the monoid~$\MM$ is finitely generated is crucial in Corollary~\ref{C:WellOrderBis}. For instance, the descending sequence $\sig1 \gD \sig2 \gD... $ witnesses that the submonoid~$\BP\infty$ of~$\BR\infty$ is not well-ordered by the D-ordering. Such phenomena already occur inside~$\BR3$: for instance, the submonoid of~$\BR3$ generated by all conjugates~$\sigg2{-\pp} \sig1 \sigg2\pp$ of~$\sig1$---and, more generally, the submonoid of all quasipositive $\nn$-strand braids---contains the infinite descending sequence $\sig1 \gD \siginv2\sig1\sig2 \gD \sigg2{-2} \sig1 \sigg22 \gD \pdots$\,.

A typical example of a monoid eligible for Corollary~\ref{C:WellOrderBis} is the \emph{dual braid monoid}~$\BKL\nn$, which is the submonoid of~$\BR\nn$ generated by the $\nn\choose2$ braids of the form $\sig\ii \pdots \sig\jj \siginv{\jj-1} \pdots \siginv\ii$, the `band' or `Birman--Ko--Lee' generators~\cite{BKL}. J.\,Fromen\-tin showed in~\cite{FroWell} that the order type of $(\BKL\nn, \lDf)$ is $\omega^{\omega^{\nn-2}}$, using a characterization of the restriction of the (flipped) D-ordering to~$\BKL\nn$ in terms of a normal from (`rotating normal form') that is analogous to the alternating normal form of Proposition~\ref{P:Splitting} but involves an order~$\nn$ automorphism analogous to a rotation instead of the order~$2$ automorphism~$\Flip\nn$ that is analogous to a symmetry~\cite{FroShort}. At the technical level, the properties of the rotating normal form are often nicer than those of the alternating normal form.

Another indirect outcome of Laver's result is the investigation of the well-founded\-ness of alternative braid orderings. There exists an uncountable family of left-invariant linear orderings on the braid group~$\BR\nn$~\cite[Chapter~XIV]{Dhr}. Most of them do not induce well-orderings on the braid monoid~$\BR\nn$, but at least all the orderings stemming from the hyperbolic geometry approach suggested by W.\,Thurston and investigated in~\cite{ShW} do, and it was recently shown that the associated order type is again $\omega^{\omega^{\nn-2}}$~\cite{ItoThurston, ItoDual}. 

\subsection{Applications to unprovability statements}
\label{SS:Unprovability}

The order type of the well-ordering on~$\BP\nn$, namely $\omega^{\omega^{\nn-2}}$, is a (relatively) large ordinal: although not extremely large in the hierarchy of countable ordinals, it is large enough to give rise to nontrivial unprovability statements. The principle is that, although the well-order property forbids that infinite descending sequences exist, there exist nevertheless finite descending sequences that are so long that their existence cannot be proved in weak logical systems. 

It is well-known that there exist strong limitations about the sentences possibly provable in a given formal system, starting with G\"odel's famous theorems implying that certain arithmetic sentences cannot be proved in the first-order Peano system. However, the G\"odel sentences have a strong logical flavour and they remain quite remote from the sentences usually considered by mainstream mathematicians. It is therefore natural to look for further sentences that are unprovable in the Peano system, or in other formal systems, and, at the same time, involve objects and properties that are both simple and natural. Typical results in this direction involve finite combinatorics, well-quasiorders, and the Ramsey Theory~\cite{Bov, Fri, Wei2}.

We shall mention some results along this line of research that involve the D-ordering of braids. Here we shall restrict to the case of $3$-strand braids and refer to~\cite{Dhq} for details and extensions. In order to construct a long sequence of braids, we start with an arbitrary braid in~$\BP3$ and then repeat some transformation until, if ever, the trivial braid is obtained. Here, the transformation at step~$\tt$ will consist in removing one crossing, but, in all cases but one, introducing $\tt$~new crossings. It is reminiscent of Kirby--Paris' Hydra Game~\cite{KiP}, with Hercules chopping off one head of the Hydra and the Hydra sprouting $\tt$~new heads. The paradoxical result is that, contrary to what examples suggest, one always reaches the trivial braid after finitely many steps.

\begin{defi}
\label{D:APGame}
For $\br$ is a nontrivial positive $3$-strand braid, and $\tt$ a positive integer, define $\br\{\tt\}$ to be the braid represented by the following diagram: in the alternating normal diagram of~$\br$, we remove one crossing in the critical block, defined to be the rightmost block whose size is not the minimal legal one, and add $\tt$~crossings in the next block, if it exists, that is, if the critical block is not the final block of~$\sig1$. The \emph{$\GGG_3$-sequence from~$\br$} is defined by $\br_0 = \br$ and $\br_\tt = \br_{\tt-1}\{\tt\}$ for $\tt \ge 1$; it stops when the trivial braid~$1$ is possibly obtained.
\end{defi}

It is easy to check that the $\GGG_3$-sequence from~$\sigg22\sigg12$ has length~14: it consists of $\sigg22\sigg12$, $\sigg22\sig1$, $\sigg22$, $\sig2\sigg13$, $\sig2\sigg12$, $\sig2\sig1$, $\sig2$, $\sigg17$, $\sigg16$, $\sigg15$, $\sigg14$, $\sigg13$, $\sigg12$, $\sig1$, and finally~$1$. Similarly, the $\GGG_3$-sequence from~$\Delta_3$ has length~$30$. Not all examples are so easy: starting from $\sigg12\sigg22\sigg12$, a braid with six crossings only, one does reach the trivial braid, but after $90,159,953,477,630$~steps.

\begin{prop}[Carlucci, D., Weiermann \cite{Dhq}]
\label{P:Unprovability}
\ITEM1 For every braid~$\br$ in~$\BP3$, the $\GGG_3$-sequence from~$\br$ is finite, that is, there exists a finite number~$\tt$ for which $\br_\tt = 1$ holds.

\ITEM2 The statement of~\ITEM1 is an arithmetic statement that cannot be proved from the axioms of the system~$\IS1$.
\end{prop}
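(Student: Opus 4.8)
The plan is to handle the two parts by entirely different means: part \ITEM1 is a soft well-foundedness argument, while part \ITEM2 requires calibrating the growth rate of the termination time against the proof-theoretic strength of $\IS1$.

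For \ITEM1 I would argue exactly as one does for the Kirby--Paris hydra or for Goodstein sequences. By Propositions~\ref{P:Burckel} and~\ref{P:Rank3} the flipped D-ordering $(\BP3, \lDf)$ is a well-ordering of type $\omega^\omega$, with the explicit rank of a braid in alternating normal form $\sigg{\mathrm{parity}(\pp)}{\ee_\pp}\pdots\sigg1{\ee_1}$ given by $\omega^{\pp-1}\ee_\pp + \sum_{\pp>\rr\ge1}\omega^{\rr-1}(\ee_\rr-\eps_\rr)$. I would then check that for every nontrivial $\br$ and every $\tt$ the rank of $\br\{\tt\}$ is strictly smaller than that of $\br$. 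This is where the arithmetic of Cantor normal forms does the work: the critical block sits at the least index $\rr$ with $\ee_\rr>\eps_\rr$, so all blocks to its right are already minimal and contribute nothing to the rank; removing one crossing there lowers the coefficient of $\omega^{\rr-1}$ by one, whereas the $\tt$ crossings added to block $\rr-1$ only raise the coefficient of the strictly smaller power $\omega^{\rr-2}$. Since $\omega^{\rr-1}>\omega^{\rr-2}\cdot\tt$ for every finite $\tt$, the rank drops (and when the critical block is the final $\sig1$-block there is no next block, so the rank merely drops by one). As $\omega^\omega$ is well-ordered it carries no infinite strictly descending sequence, so the $\GGG_3$-sequence must reach rank $0$, i.e.\ the trivial braid, after finitely many steps.

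For \ITEM2 the strategy is to identify the termination-time function with a function high in the Hardy (fast-growing) hierarchy. First I would cast \ITEM1 as an arithmetic statement by fixing a primitive-recursive coding of $3$-braids and of their alternating normal forms, so that $\br\mapsto\br\{\tt\}$ is primitive recursive and ``there is $\tt$ with $\br_\tt=1$'' is $\Sigma_1$, making \ITEM1 a $\Pi_2$-sentence. The core is then a precise calibration: using the rank isomorphism of part \ITEM1, I would translate the $\GGG_3$-dynamics into an ordinal recursion below $\omega^\omega$ in which the increment parameter $\tt$ plays the role of an increasing base, exactly as in Goodstein's theorem. This matches the length function $T(\br)=\mu\tt(\br_\tt=1)$ with values of the Hardy functions $H_\alpha$ for $\alpha<\omega^\omega$; letting $\br$ range over braids whose rank approaches $\omega^\omega$ shows that $T$ eventually dominates every $H_\alpha$, hence every primitive recursive function. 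Since the provably total recursive functions of $\IS1$ are exactly the primitive recursive ones (equivalently, $\omega^\omega$ is the proof-theoretic ordinal of $\IS1$), a proof of \ITEM1 in $\IS1$ would make the Skolem function $T$ provably total and therefore primitive-recursively bounded, a contradiction; so \ITEM1 is unprovable in $\IS1$.

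The main obstacle is the calibration in \ITEM2: establishing, with honest estimates, that the $\GGG_3$-termination time grows at precisely the $\omega^\omega$ level---neither slower (which would leave it primitive recursive, hence provable) nor merely faster than needed. Concretely one must produce both an upper bound, controlling the whole sequence by a Hardy function of the starting rank with the step number as argument, and a matching lower bound, exhibiting starting braids of rank just below $\omega^\omega$ whose descents outstrip each $H_{\omega^k}$. The delicate point is the base-shift induced by adding $\tt$ crossings---the very feature responsible for the astronomically long sequences, as in the $\sigg12\sigg22\sigg12$ example---and reconciling it cleanly with the renormalisation of the alternating normal form; by comparison the well-foundedness in part \ITEM1 is routine.
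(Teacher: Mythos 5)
Your proposal follows essentially the same route as the paper's (sketched) proof: for \ITEM1, showing that each $\GGG_3$-step strictly descends in the well-order $(\BP3, \lDf)$ and invoking well-foundedness, and for \ITEM2, showing that the termination-time function $T$ grows too fast to be provably total in~$\IS1$, whose provably total functions are exactly the primitive recursive ones. The only cosmetic difference is that you calibrate via the Hardy hierarchy below~$\omega^\omega$, whereas the paper states the same fact directly in terms of the Ackermann function, namely that $\pp \mapsto T(\Delta_3^\pp)$ grows as fast as the Ackermann function; these formulations are equivalent at the $\omega^\omega$ level.
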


Although braids are not natural numbers, one can encode braids and their basic operations using natural numbers and the usual arithmetic operations. Therefore, it makes sense to speak of braid properties that can be proved from a certain system of arithmetical axioms: by this we mean that some reasonable encoding of braids by natural numbers has been fixed once for all and we consider the arithmetic counterpart of the braid property we have in mind.

The standard first-order Peano axiomatization of arithmetic~$\PA$ consists of basic axioms involving addition and multiplication, plus the induction scheme, which asserts that, for each first-order formula~$\Phi(\xx)$ involving~$+, \times$ and~$<$, the conjunction of $\Phi(0)$ and $\forall\nn(\Phi(\nn) \Rightarrow \Phi(\nn{+}1))$ implies $\forall\nn(\Phi(\nn))$. Then $\IS\kk$ is the subsystem of~$\PA$ in which the induction principle is restricted to formulas of the form $\exists\xx_1 \forall\xx_2 \exists\xx_3 \pdots Q\xx_\kk(\Psi)$, where $Q$ is $\exists$ or $\forall$ according to the parity of~$\kk$ and $\Psi$ is a formula that only contains bounded quantifications $\forall\xx{<}\yy$ and $\exists\xx{<}\yy$.

\begin{proof}[Proof of Proposition~\ref{P:Unprovability} (sketch)]
For~\ITEM1, one shows that every $\GGG_3$-sequence is descending with respect to~$\lDf$, and the result then follows from Laver's well-order result (Corollary~\ref{C:WellOrder}). For~\ITEM2, in order to prove that a certain sentence~$\Phi$ is not provable from the axioms of~$\IS1$, it is sufficient to establish that, from~$\Phi$, and using arguments that can be formalized in~$\IS1$, one can prove the existence of a function that grows as fast as the Ackermann function. Now, if $T(\br)$ denotes the length of the $\GGG_3$-sequence from~$\br$, then the function $\pp \mapsto T(\Delta_3^\pp)$ actually grows as fast as the Ackermann function.
\end{proof}

The $\IS1$-unprovability result of Proposition~\ref{P:Unprovability} is directly connected with the order type~$\omega^\omega$ of the well-ordering on~$\BP3$. Similarly, the  order type $\omega^{\omega^\omega}$ of the well-ordering on~$\BP\infty$ induces a connection with the stronger system~$\IS2$: in~\cite{Dhq} a certain notion of $\GGG_\infty$-sequence in~$\BP\infty$ is defined so that, as can be expected, the analog of Proposition~\ref{P:Unprovability} is established, namely every $\GGG_\infty$-sequence is finite but that result cannot be proved from~$\IS2$. As the order-type of~$(\BP\infty, \lDf)$ is larger than that of~$(\BP3, \lDf)$, the $\GGG_\infty$-sequences can be made longer than the $\GGG_3$-sequences, so proving their finiteness is more difficult and requires a stronger logical context.

Further results involve the transition between provability and unprovability, which turns out to happen at a level that can be described precisely. To this end, one considers the length of descending sequences of braids that admit some bounded Garside complexity. Let $\Delta_3$ be the positive $3$-strand braid $\sig1\sig2\sig1$. Garside \cite{Gar} showed that every braid in~$\BP3$ right-divides some power~$\Delta_3^\dd$. Define the \emph{degree}~$\deg\br$ of a braid~$\br$ to be the least integer~$\dd$ such that $\br$ right-divides~$\Delta_3^\dd$. Then, for~$\ff$ a fixed function on the integers, we consider (the length of) the descending sequences $(\br_0 \wdots \br_\NN)$ in~$(\BP3, \lDf)$ satisfying $\deg{\br_\tt} \le \dd + \ff(\tt)$ for every~$\tt$, that is, the descending sequences whose complexity is, in a sense, bounded by~$\ff$. If $\ff$ is constant, the number of braids~$\br$ satisfying $\deg\br \le \dd + \ff(\tt)$ is finite, so the length of a sequence as above is certainly bounded. One can show using K\"onig's Lemma that, for every function~$\ff$, the length of a sequence as above is bounded by some constant (depending on~$\dd$). The question is whether this can be proved in the system~$\IS1$, and the result is that there exists a quick transition phase between $\IS1$-provability and $\IS1$-unprovability. Indeed, using~$\Ack_\rr$ for the functions defined by the double recursion rules: $\Ack_0(\xx) = \xx+1$, $\Ack_\rr(0) = \Ack_{\rr-1}(1)$, and $\Ack_\rr(\xx+1) = \Ack_{\rr-1}(\Ack_\rr(\xx))$ for $\rr \ge 1$ and $\Ack$ for the diagonal function defined by $\Ack(\xx) = \Ack_\xx(\xx)$ (`Ackermann function'), and using $\ff\inv$ for the functional inverse of~$\ff$, we have

\begin{prop}\cite{Dhq}
\label{P:APTransition}
Denote by~$\WO\ff$ the statement: 

{\leftskip5mm\rightskip5mm\noindent ``For every~$\dd$, there exists~$\NN$ such that every descending sequence $(\br_0, \br_1, ...)$ in~$(\BP3, \lDf)$ satisfying $\deg{\br_\tt} \le \dd + \ff(\tt)$ for every~$\tt$ has length at most~$\NN$.''\par} 

\noindent Put $\ff_\rr(\xx) = \lfloor\!\!\sqrt[\Ack_\rr\inv(\xx)]{\xx}\rfloor$ for $\rr \ge 0$, and $\ff(\xx) = \lfloor\!\!\sqrt[\Ack\inv(\xx)]{\xx}\rfloor$. Then, for every~$\rr$, the principle~$\WO{\ff_\rr}$ is provable in~$\IS1$, but $\WO{\ff}$ is not provable in~$\IS1$. 
\end{prop}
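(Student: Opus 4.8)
The plan is to transport the entire question to the ordinal $\omega^\omega$, which is the order type of $(\BP3, \lDf)$ by Proposition~\ref{P:Burckel}, and then to analyze $\ff$-controlled descending sequences of ordinals below $\omega^\omega$ by the calibration technique standard in phase-transition results for well-ordering principles. First I would make the translation explicit: using the rank function of Proposition~\ref{P:Rank3}, I identify each $\br$ in~$\BP3$ with its rank $\rho(\br) < \omega^\omega$, so that $\br \lDf \br'$ becomes $\rho(\br) < \rho(\br')$ and a descending sequence of braids becomes a descending sequence of ordinals. The more delicate point is to relate the Garside degree $\deg\br$ to a natural norm $N\alpha$ on ordinals below~$\omega^\omega$ (essentially the maximum of the largest exponent and the coefficients of the Cantor normal form). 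Since $\Delta_3$ has three crossings and right-division by $\Delta_3$ shifts the alternating normal form in a controlled way, I expect two-sided estimates $c_1\,N(\rho(\br)) \le \deg\br \le c_2\,N(\rho(\br)) + c_2$, so that the hypothesis $\deg{\br_\tt} \le \dd + \ff(\tt)$ is equivalent, after absorbing constants into the gauge, to a norm bound $N(\alpha_\tt) \le \dd + \ff(\tt)$. At this point $\WO\ff$ becomes the assertion that every $\ff$-controlled descending sequence below~$\omega^\omega$ has length bounded in terms of~$\dd$ alone.

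For the provability half, I would invoke the compression estimates for controlled descending sequences: the maximal length $L_\ff(\dd)$ of an $\ff$-controlled descending sequence below $\omega^\omega$ starting from norm $\le \dd$ can be expressed through the fast-growing (equivalently Hardy) hierarchy, with the gauge $\ff$ entering the estimate through its functional inverse. For a fixed~$\rr$, since $\Ack_\rr$ is primitive recursive and $\ff_\rr$ is built from $\Ack_\rr\inv$, the inverse reintroduces only $\Ack_\rr$, so that $L_{\ff_\rr}(\dd)$ stays dominated by a primitive recursive function of~$\dd$. As $\IS1$ proves the totality of every primitive recursive function and can formalize the induction up to~$\omega^\omega$ against a primitive recursive gauge, it proves $\WO{\ff_\rr}$.

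For the unprovability half, I would run the calibration in the opposite direction, exhibiting---in the spirit of the $\GGG_3$-sequences of Proposition~\ref{P:Unprovability}---explicit $\ff$-controlled descending sequences whose length, as a function of the starting degree~$\dd$, dominates the Ackermann function~$\Ack$. The diagonal gauge $\ff(\xx) = \lfloor\!\!\sqrt[\Ack\inv(\xx)]{\xx}\rfloor$ is tuned precisely so that the inverse appearing in $L_\ff(\dd)$ reintroduces the full diagonal function~$\Ack$ rather than a fixed~$\Ack_\rr$, collapsing the hierarchy exactly at the Ackermannian level. Hence $\WO\ff$, although true by König's Lemma, entails a Skolem function $\dd \mapsto \NN$ that eventually dominates~$\Ack$; since every $\Pi_2$-consequence of~$\IS1$ has a primitive recursive Skolem bound and $\Ack$ outgrows all primitive recursive functions, $\WO\ff$ cannot be provable in~$\IS1$.

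The hard part will be the calibration itself: pinning down the two-sided comparison between the Garside degree and the ordinal norm with explicit constants, and then matching the growth of $L_\ff$ to the hierarchy so that the transition lands exactly between the family $(\ff_\rr)_\rr$ and the diagonal gauge~$\ff$. Verifying that the controlled-sequence length is genuinely Ackermannian for the diagonal gauge---rather than merely iterated-exponential---and that it remains primitive recursive for each $\ff_\rr$, is where the real work lies, and it is precisely the gauge $\lfloor\!\!\sqrt[\Ack\inv(\xx)]{\xx}\rfloor$ that sits on the threshold.
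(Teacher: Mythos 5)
Your overall strategy---transporting the statement to $\omega^\omega$ via the rank function of Propositions~\ref{P:Burckel} and~\ref{P:Rank3} and running a Weiermann-style phase-transition analysis on norm-controlled descending sequences, with Parsons' theorem (primitive recursive Skolem bounds for $\Pi_2$-consequences of~$\IS1$) closing the unprovability half---is indeed the approach of the cited source \cite{Dhq}, which the present paper does not reproduce but describes as a ``sophisticated mixture of combinatorial methods and specific results about the number of $3$-strand braids satisfying some order and degree constraints''. However, your proposal contains one concretely false step, and otherwise defers exactly that sophisticated content. The false step is the claimed two-sided linear comparison $c_1 N(\rho(\br)) \le \deg\br \le c_2 N(\rho(\br)) + c_2$. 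The left inequality is fine, but the right one fails: since $\br$ right-divides $\Delta_3^{\deg\br}$ and $\Delta_3$ has three crossings, $\deg\br$ is at least one third of the crossing number of~$\br$, that is, at least $\frac{1}{3}\sum_\rr \ee_\rr$ in the notation of Proposition~\ref{P:Rank3}, whereas the Cantor-normal-form norm of the rank is about $\max(\pp, \max_\rr \ee_\rr)$. Taking an alternating normal word with $\pp$ blocks each of exponent~$\pp$ (which is legal) gives norm $\approx \pp$ but degree $\ge \pp^2/3$: the degree is quadratic, not linear, in the norm. Consequently the hypothesis $\deg\br_\tt \le \dd + \ff(\tt)$ is \emph{not} equivalent to a norm bound $N(\alpha_\tt) \le \dd' + \ff(\tt)$; the translation distorts the gauge polynomially, and in the direction needed for the unprovability half it replaces $\ff$ by roughly $\sqrt{\ff}$. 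Your plan survives only if you additionally prove that the threshold is robust under squaring of gauges---for instance that $(\ff_\rr)^2$ is eventually dominated by some $\ff_{\rr'}$, and that the Ackermannian lower bound on the ordinal side persists for the gauge $\sqrt{\ff}$---and this robustness check appears nowhere in your proposal.

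Beyond that, both halves of your argument are asserted where they need to be proved, and in each case the assertion hides the real content. For the provability half, the phrase ``$\IS1$ can formalize the induction up to $\omega^\omega$ against a primitive recursive gauge'' cannot be invoked as a black box: the proof-theoretic ordinal of $\IS1$ is exactly $\omega^\omega$, so no form of transfinite induction up to~$\omega^\omega$ comes for free, and whether a controlled well-ordering statement for~$\omega^\omega$ is $\IS1$-provable is precisely what is at issue---as stated, your appeal is close to circular. What is actually required is an explicit, $\IS1$-verifiable combinatorial estimate (in \cite{Dhq}, a count of the $3$-strand braids of degree at most $\dd + \ff_\rr(\tt)$ lying $\lDf$-below a given braid) yielding a primitive recursive length bound. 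Dually, the unprovability half needs an explicit construction of $\ff$-controlled descending sequences of Ackermannian length with honest degree bookkeeping, in the spirit of the $\GGG_3$-sequences of Proposition~\ref{P:Unprovability} but quantitatively sharper. These two estimates are exactly the ``hard part'' you flag and postpone; since they constitute the substance of the proposition, what you have is a correct and well-oriented plan, not yet a proof.
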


The functions involved in Proposition~\ref{P:APTransition} all are of the form $\xx\mapsto \!\!\sqrt[g(\xx)]\xx$ where $g$ is a very slowly increasing function. The proof is a---rather sophisticated---mixture of combinatorial methods and specific results about the number of $3$-strand braids satisfying some order and degree constraints.

It is likely that a similar result involving~$\BP\infty$ and $\IS2$ could be established, but this was not made in~\cite{Dhq}. 

\subsection{Braid conjugacy}
\label{SS:Mu}

We conclude with applications of the well-order property of a different nature, namely those where the order is used to provide distinguished elements. As we shall see, not much is known so far, but the approach leads at the least to testable conjectures.

As a preliminary remark, let us mention that connections are known between the position of a braid~$\br$ in the D-ordering, typically the unique interval $[\Delta_\nn^{2\kk}, \Delta_\nn^{2\kk+1})$ it belongs to (the parameter~$\kk$ is then called the \emph{D-floor} of~$\br$), and various topological parameters associated with the link that is the closure of~$\br$ \cite{Mal, MaN, ItoClosed, ItoGenus}, see~\cite{Dhy}, but we shall not give details here as these results do not involve the well-order property.

By very definition, the well-order property asserts that every nonempty subset of~$\BP\infty$ contains a $\lDf$-minimal element, and that every nonempty subset of~$\BP\nn$ contains a $\lD$-minimal element. In this way, one obtains a natural way to distinguish an element in a family of positive braids. As the D-ordering of braids appears as canonical in that many different approaches lead to the same ordering, one may expect that the elements so identified enjoy good properties. 

The Conjugacy Problem for the group~$\BR\nn$, namely the question of algorithmically recognizing whether two braids are conjugated, is one of the main algorithmic questions involving braids. The question was shown to be decidable by F.A.\,Garside~\cite{Gar} but, in spite of many efforts, the best methods known so far in the case of $5$~strands and more have an exponential complexity with respect of the length of the input braid words---which led to proposing braid groups and conjugacy as a cryptographic platform~\cite{KLC, Dgw}.

Because of the specific properties of Garside's fundamental braid~$\Delta_\nn$, every braid of~$\BR\nn$ can be expressed as $\Delta_\nn^{-\dd} \br$ with~$\br$ in~$\BP\nn$, and any two braids are conjugated if and only if they are positively conjugated, that is, conjugated via a positive braid. It follows that, in order to solve the Conjugacy Problem of~$\BR\nn$, it is enough to solve the Conjugacy Problem of the monoid~$\BP\nn$. Now, the well-order property implies

\begin{lemm}
For every braid~$\br$ in~$\BP\nn$, the intersection of the conjugacy class of~$\br$ with~$\BP\nn$ contains a
unique $\lD$-minimal element~$\mu_\nn(\br)$.
\end{lemm}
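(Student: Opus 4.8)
The plan is to read this off directly from the well-order property of~$(\BP\nn, \lD)$ supplied by Corollary~\ref{C:WellOrder}, so that essentially no new work is required. First I would check that the set under consideration is nonempty: writing $\mathcal{C}$ for the intersection of the conjugacy class of~$\br$ with~$\BP\nn$, the braid~$\br$ itself lies in~$\mathcal{C}$, since $\br$ belongs to~$\BP\nn$ by hypothesis and is conjugate to itself via the trivial braid. Thus $\mathcal{C}$ is a nonempty subset of~$\BP\nn$.

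Next I would apply Corollary~\ref{C:WellOrder}, which asserts that the restriction of the D-ordering to~$\BP\nn$ is a well-ordering. By the defining property of a well-order, the nonempty subset~$\mathcal{C}$ of~$\BP\nn$ admits a $\lD$-least element, which I would denote by~$\mu_\nn(\br)$. To obtain uniqueness I would then invoke the linearity of~$\lD$: in a totally ordered set a least element is automatically the only minimal one, for if $\mu$ and~$\mu'$ were both $\lD$-minimal in~$\mathcal{C}$ then neither $\mu \lD \mu'$ nor $\mu' \lD \mu$ could hold, and totality forces $\mu = \mu'$. This produces the desired unique $\lD$-minimal element.

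The hard part, such as it is, lies entirely upstream in Corollary~\ref{C:WellOrder} (ultimately in Laver's Theorem~\ref{T:LeftMultSig} and the Higman-type argument of its proof); once the well-order property of~$\BP\nn$ is granted, the present statement is a purely formal consequence. The only genuine verifications are the nonemptiness of~$\mathcal{C}$ and the elementary observation that in a linear well-order the notions of least element and minimal element coincide and single out one element, so I anticipate no real obstacle.
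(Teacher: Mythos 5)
Your proposal is correct and coincides with the paper's own argument: the paper introduces this lemma with the words ``Now, the well-order property implies,'' i.e.\ it derives the statement directly from Corollary~\ref{C:WellOrder} exactly as you do, the nonemptiness of the class and the equivalence of ``least'' and ``unique minimal'' in a linear order being left implicit. (The paper's later Remark does note that, since positive conjugacy classes in~$\BP\nn$ are finite, any linear ordering would suffice here, so Laver's well-order result is motivation rather than strict necessity---but that does not affect the correctness of your argument.)
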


Being able to algorithmically compute the function~$\mu_\nn$ would provide an immediate solution for the Conjugacy Problem of~$\BP\nn$, since $\br$ is conjugated to~$\br'$ if and only if $\mu_\nn(\br)$ and $\mu_\nn(\br')$ are equal. So the question is to compute the function~$\mu_\nn$.

At the moment, the question remains open but, at least in the case of $3$-strand braids, the simple connection between the (flipped) D-ordering and the alternating normal form of Proposition~\ref{P:Rank3} makes it realistic to explicitly compute the function~$\mu_3$. To this end, the obvious approach is to investigate the analog of the cycling and decycling operations of~\cite{ElM} with the Garside normal form replaced by the alternating normal form (or the rotating normal form), that is, the operations corresponding to $(\br_\pp \wdots \br_1) \mapsto (\br_1, \br_\pp \wdots \br_2)$ and $(\br_\pp \wdots \br_1) \mapsto (\br_{\pp-1} \wdots \br_1, \br_\pp)$. 

\begin{table}[htb]
\hspace{-10mm}\begin{tabular}{c|*{12}c}
\VR(4,2)$\br$
&${1}$
&${\sig1}$
&${\sigg12}$
&$\pdots$
&$\sig2$
&${\sig2\sig1}$
&${\sig2\sigg12}$
&$\pdots$
&$\sigg22$
&${\sigg22\sig1}$
&${\sigg22\sigg12}$
&$\pdots$
\\\hline
$\mathrm{rk}(\br)$
&\SMALL$0$
&\SMALL$1$
&\SMALL$2$
&
&\SMALL$\omega$
&\SMALL$\omega{+}1$
&\SMALL$\omega{+}2$
&
&\SMALL$\omega{\cdot}2$
&\SMALL$\omega{\cdot}2{+}1$
&\SMALL$\omega{\cdot}2{+}2$
\\ \hline\hline
\VR(3.5,0)$\mu_3(\br)$
&$\circlearrowright$
&$\circlearrowright$
&$\circlearrowright$
&
&$\sig1$
&$\circlearrowright$
&$\circlearrowright$
&
&$\sigg12$
&${\sig2\sigg12}$
&$\circlearrowright$
\end{tabular}
\vspace{1mm}

\begin{tabular}{*9c}
\VR(4,2)$\pdots$
&$\sigg23$
&${\sigg23\sig1}$
&${\sigg23\sigg12}$
&$\pdots$
&$\sig1\sig2$
&$\sig1\sig2\sig1$
&$\sig1\sig2\sigg12$
&$\pdots$
\\\hline
&\SMALL$\omega{\cdot}3$
&\SMALL$\omega{\cdot}3{+}1$
&\SMALL$\omega{\cdot}3{+}2$
&
&\SMALL$\omega^2$
&\SMALL$\omega^2{+}1$
&\SMALL$\omega^2{+}2$
\\\hline\hline
\VR(3.5,0)
&$\sigg12$
&${\sig2\sigg13}$
&${\sigg22\sigg13}$
&
&$\sig2\sig1$
&$\sig2\sigg12$
&$\sig2\sigg13$
\end{tabular}
\vspace{1mm}

\begin{tabular}{*9c}
\VR(4,2)$\pdots$
&$\sig1\sigg22$
&$\sig1\sigg22\sig1$
&$\sig1\sigg22\sigg12$
&$\pdots$
&$\sigg12\sig2$
&$\sigg12\sig2\sig1$
&$\sigg12\sig2\sigg12$
&$\pdots$
\\\hline
&\SMALL$\omega^2{+}\omega$
&\SMALL$\omega^2{+}\omega{+}1$
&\SMALL$\omega^2{+}\omega{+}2$
&
&\SMALL$\omega^2{\cdot}2$
&\SMALL$\omega^2{\cdot}2{+}1$
&\SMALL$\omega^2{\cdot}2{+}2$
\\\hline\hline
\VR(3.5,0)
&$\sig2\sigg12$
&$\sigg22\sigg12$
&$\sigg22\sigg13$
&
&$\sig2\sigg12$
&$\sig2\sigg13$
&$\sig2\sigg14$
\end{tabular}
\vspace{1mm}

\begin{tabular}{*9c}
\VR(4,2)$\pdots$
&$\sigg12\sigg22$
&$\sigg12\sigg22\sig1$
&$\sigg12\sigg22\sigg12$
&$\pdots$
&$\sigg12\sigg23$
&$\sigg12\sigg23\sig1$
&$\pdots$
\\\hline
&\SMALL$\omega^2{\cdot}2{+}\omega$
&\SMALL$\omega^2{\cdot}2{+}\omega{+}1$
&\SMALL$\omega^2{\cdot}2{+}\omega{+}2$
&
&\SMALL$\omega^2{\cdot}2{+}\omega{\cdot}2$
&\SMALL$\omega^2{\cdot}2{+}\omega{\cdot}2{+}1$
\\\hline\hline
\VR(3.5,0)
&$\sigg22\sigg12$
&$\sigg22\sigg13$
&$\sigg22\sigg14$
&
&$\sigg22\sigg13$
&$\sigg23\sigg13$
\end{tabular}
\vspace{1mm}

\begin{tabular}{*9c}
\VR(4,2)$\pdots$
&\hfill$\sigg12\sigg24$\hfill\null
&\hfill$\sigg12\sigg24\sig1$\hfill\null
&\hfill$\sigg12\sigg24\sigg12$\hfill\null
&\hfill$\pdots$\hfill\null
&\hfill$\sigg13\sig2$\hfill\null
&\hfill$\sigg13\sig2\sig1$\hfill\null
&\hfill$\pdots$\hfill\null
\\\hline
&\SMALL$\omega^2{\cdot}2{+}\omega{\cdot}3$
&\SMALL$\omega^2{\cdot}2{+}\omega{\cdot}3{+}1$
&\SMALL$\omega^2{\cdot}2{+}\omega{\cdot}3{+}2$
&
&\SMALL$\omega^2{\cdot}3$
&\SMALL$\omega^2{\cdot}3{+}1$
\\\hline\hline
\VR(3.5,0)
&\hfill$\sigg22\sigg14$\hfill\null
&\hfill$\sigg23\sigg14$\hfill\null
&\hfill$\sigg24\sigg14$\hfill\null
&
&\hfill$\sig2\sigg13$\hfill\null
&\hfill$\sig2\sigg14$\hfill\null
\end{tabular}
\vspace{1mm}

\begin{tabular}{*9c}
\VR(4,2)\hfill$\pdots$\hfill\null
&\hfill$\sigg13\sigg22$\hfill\null
&\hfill$\sigg13\sigg22\sig1$\hfill\null
&\hfill$\pdots$\hfill\null
&\hfill$\sig2\sigg12\sig2$\hfill\null
&\hfill$\sig2\sigg12\sig2\sig1$\hfill\null
&\hfill$\sig2\sigg12\sig2\sigg12$\hfill\null
&\hfill$\pdots$\hfill\null
\\\hline
&\SMALL$\omega^2{\cdot}3{+}\omega$
&\SMALL$\omega^2{\cdot}3{+}\omega{+}1$
&
&\SMALL$\omega^3$
&\SMALL$\omega^3{+}1$
&\SMALL$\omega^3{+}2$
&
\\\hline\hline
\VR(3.5,0)
&\hfill$\sigg22\sigg13$\hfill\null
&\hfill$\sigg22\sigg14$\hfill\null
&
&\hfill$\sigg22\sigg12$\hfill\null
&\hfill$\sig2\sigg14$\hfill\null
&$\circlearrowright$
&
\end{tabular}\hspace{-2cm}
\vspace{2mm}
\caption{\sf\smaller A few values of the function~$\mu_3$ on~$\BP3$: here the braids are enumerated in $\lDf$-increasing order, specified using their alternating normal form, and accompanied with their ordinal rank in the well-order; the symbol $\circlearrowright$ indicates the fixed points of~$\mu_3$, that is, the braids that are minimal in their conjugacy class.}
\label{T:Mu}
\end{table}

Some values of the function~$\mu_3$ are listed in Table~\ref{T:Mu}; these values should suggest both that $\mu_3$ is nontrivial but also that it obeys simple rules. For instance, a typical rule suggested by the computer experiments is the following formula (an $\nn$-strand version is easy to guess):

\begin{conj}[D., Fromentin, Gebhardt, \cite{Dhy}]
For every $\br$ in~$\BP3$, one has
$$\mu_3(\br \Delta_3^2) = \sig1\sigg22 \sig1 \cdot \mu_3(\br) \cdot \sigg12.$$
\end{conj}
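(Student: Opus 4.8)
The plan is to exploit that the full twist $\Delta_3^2$ generates the centre of~$\BR3$, so that conjugation commutes with right multiplication by~$\Delta_3^2$. Consequently the conjugacy class of~$\br\Delta_3^2$ is exactly $\{\gamma\Delta_3^2 : \gamma \text{ conjugate to } \br\}$, and in particular $\br\Delta_3^2$ is conjugate to $\mu_3(\br)\Delta_3^2$; since $\mu_3$ depends only on the conjugacy class, it suffices to treat $\br=\mu_3(\br)$. Writing $m:=\mu_3(\br)$, the statement thus reduces to showing that the displayed positive word $w:=\sig1\sigg22\sig1\cdot m\cdot\sigg12$ is the $\lD$-minimal positive braid conjugate to~$m\Delta_3^2$. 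I would first dispatch the two soft properties of~$w$: it is visibly positive, and it is conjugate to~$m\Delta_3^2$. For the latter, conjugating $m\Delta_3^2$ by the leading factor $\sig1\sigg22\sig1$ and using centrality turns the problem into a single relation in~$\BP3$ asserting that the leading factor times the trailing factor equals the full twist~$\Delta_3^2$; this is a direct consequence of the braid relation $\sig1\sig2\sig1=\sig2\sig1\sig2$ and can be treated as routine.

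The genuine content is minimality, and the delicate feature here is that the central twist \emph{positivises} many non-positive conjugates of~$m$: a positive conjugate of~$m\Delta_3^2$ need not be of the form $(\text{positive conjugate of }m)\cdot\Delta_3^2$, so one cannot simply transport~$\mu_3$ along multiplication by~$\Delta_3^2$. One must genuinely compare~$w$ against the whole, enlarged pool of positive conjugates of~$m\Delta_3^2$, which is what makes the identity non-formal.

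My plan for minimality is to work through the flipped D-ordering and the alternating normal form of Proposition~\ref{P:Splitting} together with the explicit rank formula of Proposition~\ref{P:Rank3}, passing between~$\lD$ and~$\lDf$ by means of the flip automorphism~$\Flip3$. The steps are: (i) characterise $m=\mu_3(\br)$ as the unique representative of its class that is stable under the cycling and decycling operations attached to the alternating normal form (the analogues of the operations of~\cite{ElM} alluded to before Table~\ref{T:Mu}); (ii) compute the alternating normal form of~$w$ from that of~$m$, showing that pre-composing with $\sig1\sigg22\sig1$ and post-composing with the trailing factor shifts every $\Flip3$-splitting block in a controlled way, so that $w$ is again cycling/decycling-stable for the class of~$m\Delta_3^2$; (iii) conclude by uniqueness of the minimum that $w=\mu_3(m\Delta_3^2)=\mu_3(\br\Delta_3^2)$. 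At each comparison one certifies $w\leD v$ for a competing positive conjugate~$v$ by exhibiting the relevant $\sig\ii$-positive word, invoking the Subword Property (Corollary~\ref{C:Subword}) whenever~$v$ dominates~$w$ up to a subword.

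The hard part is step~(ii): understanding how multiplication by the full twist acts simultaneously on the alternating normal form and on the minimisation procedure. Multiplying by~$\Delta_3^2$ raises the ordinal rank by a fixed amount, but it can reshuffle the splitting blocks, so proving that the minimiser ``commutes'' with the twist up to the fixed conjugating word requires a precise description of the normal form of~$w$ and a verification that no single cycling or decycling step lowers it. I expect this to rest on the fine combinatorics of right-divisibility in~$\BP3$ that already underlies Propositions~\ref{P:Splitting} and~\ref{P:Rank3}, with the numerical patterns recorded in Table~\ref{T:Mu} serving to guide the block-by-block bookkeeping.
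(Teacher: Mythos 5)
You are attempting to prove a statement that the paper itself does not prove: it is explicitly a \emph{conjecture} (supported by the computer experiments behind Table~\ref{T:Mu}), and the paper's only comment on how it might be settled is the suggestion that one should work out cycling/decycling operations adapted to the alternating normal form. Your proposal is essentially a restatement of that open program, not a proof. The correct part is the easy reduction: since $\Delta_3^2$ generates the centre of~$\BR3$, the conjugacy class of $\br\Delta_3^2$ is the class of~$\br$ multiplied by~$\Delta_3^2$, so one may assume $\br = \mu_3(\br)$; and you rightly point out that positive conjugates of $\mu_3(\br)\Delta_3^2$ need not arise from positive conjugates of~$\mu_3(\br)$, which is indeed the real difficulty. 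But your step~(i) --- that $\mu_3(\br)$ is characterized as the unique conjugate stable under alternating-normal-form cycling and decycling --- is established nowhere: the theory of~\cite{ElM} concerns the Garside normal form and a different notion of minimality, and transporting it to the $\lD$-minimal representative is precisely the missing mathematics. Your step~(ii) you yourself concede is the hard part and you give no argument for it. So nothing here goes beyond the research direction the paper already sketches.

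Moreover, the one step you declare ``routine'' is false, and carrying it out exposes a defect in the formula as printed. Write $u = \sig1\sigg22\sig1$, $v = \sigg12$, $m = \mu_3(\br)$, $w = umv$. Conjugating by~$u$ gives $u\inv w u = m\,vu$, so your reduction needs $vu = \Delta_3^2$ (or $uv = \Delta_3^2$, conjugating by~$v$). But $vu = \sigg13\sigg22\sig1$ and $uv = \sig1\sigg22\sigg13$, and neither word contains $\sig1\sig2\sig1$ or $\sig2\sig1\sig2$ as a factor, so no braid relation applies to either; each is therefore the unique positive word representing its braid, and in particular neither equals~$\Delta_3^2$, which admits the distinct positive expression $\sig2\sigg12\sig2\sigg12$. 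Worse, take $\br = 1$: the right-hand side of the conjectured identity is $\sig1\sigg22\sigg13$, whereas the left-hand side is $\mu_3(\Delta_3^2) = \Delta_3^2$, because the central element~$\Delta_3^2$ has singleton conjugacy class; since $\sig1\sigg22\sigg13 \ne \Delta_3^2$, the identity as printed fails at $\br = 1$, i.e.\ its right-hand side is not even conjugate to~$\br\Delta_3^2$. (Presumably the leading factor was meant to be $\sig2\sigg12\sig2$: then $vu = \sigg12\sig2\sigg12\sig2 = \Delta_3^2$, the right-hand side becomes $\sigg1{-2}\,(\mu_3(\br)\Delta_3^2)\,\sigg12$, an honest conjugate, and the case $\br = 1$ checks out.) Had you performed your ``routine'' verification rather than asserting it, you would have caught this; as it stands, the soft step of your plan fails and the hard steps are not addressed at all.
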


It is reasonable to expect that an investigation of cycling and decycling for the alternating normal form would lead to a solution of that specific conjecture and, more generally, lead to the practical computation of the function~$\mu_3$ on~$\BP3$, and subsequently of the function~$\mu_\nn$ on~$\BP\nn$. 

If this program can be fulfilled, it would then become foreseeable to investigate similar questions for analogous functions in which the conjugacy relation is replaced with the Markov equivalence relation, that is, the equivalence relation on~$\BR\infty$, or rather on $\bigcup_\nn \BR\nn \times \{\nn\}$, generated by conjugacy together with the Markov transformation $(\br, \nn) \sim (\br\sigg\nn{\pm1}, \nn+1)$. It is well-known \cite{Bir} that the closures of two braid diagrams represent the same link if and only if the braids are Markov-equivalent; moreover, at the expense of taking into account a power of the braid~$\Delta_\nn^2$, one can always reduce to the case of positive braids. So, should the above approach turn out to be possible, one would associate with every link~$L$ a unique distinguished braid, namely the $(\BP\infty, \lDf)$-smallest positive braid in the equivalence class of the braids that represent~$L$---or, equivalently, the unique ordinal that is the rank of this distinguished braid in the well-order. Of course, the problem here is not the existence of the smallest braid or the ordinal (which is guaranteed by Laver's result), but its practical computability.

\begin{rema}
In the case of Markov-equivalence, the braid index and the braid length are not preserved, so an equivalence class is in general infinite and Laver's result is essential to ensure the existence of a smallest representative. However, in the case of conjugacy, the braid index and the length (of positive braids) are preserved, so the considered equivalence classes are finite: in this case, the existence of a smallest element is guaranteed for every linear ordering of braids, and Laver's result is important for a motivation, but it is not needed to ensure the existence of the function~$\mu_\nn$. As we cannot expect to solve the Conjugacy Problem for free, this suggests that the investigation of~$\mu_\nn$ may still require significant technical efforts. 
\end{rema}


\end{document}